\newtheorem{thm}{Theorem}[section]
\newtheorem{lem}[thm]{Lemma}
\newtheorem{dfn}[thm]{Definition}
\numberwithin{equation}{section}
\title{ \bf Finite difference approximations for a size-structured population model with distributed states in the
recruitment}
\author{Azmy S. Ackleh$^1$\footnote{Corresponding author email: ackleh@louisiana.edu}\; J\'{o}zsef Z.
Farkas$^2$\footnote{Email address: jozsef.farkas@stir.ac.uk} \; Xinyu Li$^1$\footnote{Email address:
xxl0154@louisiana.edu}
\; and Baoling Ma$^1$\footnote{Email address: bxm4254@louisiana.edu}}
\date{\small
 {$^1$Department of Mathematics,\\
  University of Louisiana at Lafayette,\\
  Lafayette, LA 70504, USA.\\
  \vspace{0.2 in}
  $^2$Division of Computing Science and Mathematics,\\
  University of Stirling,\\
  Srirling, FK9 4LA, U.K.}
  %\\
  %\vspace{0.2 in}
%  $^3$College of Engineering and Science,\\
%  Program of Mathematics and Statistics,\\
%  Louisiana Tech University,\\
%  Ruston, LA 71272, USA.
}
\begin{document}
\maketitle

\begin{quote}
{\bf Abstract:} In this paper we consider a size-structured population model where individuals may be recruited into
the population at different sizes. First and second order finite difference schemes are developed to approximate the
solution of the mathematical model. The convergence of the approximations to a unique weak solution with bounded
total variation is proved. We then show that as the distribution of the new recruits become concentrated at the
smallest size, the weak solution of the distributed states-at-birth model converges to the weak solution of the
classical Gurtin-McCamy-type size-structured model in the weak$^*$ topology. Numerical simulations are provided to
demonstrate the achievement of the desired accuracy of the two methods for smooth solutions as well as the superior
performance of the second-order method in resolving solution-discontinuities. Finally we provide an example where
supercritical Hopf-bifurcation occurs in the limiting single state-at-birth model and we apply the second-order
numerical scheme to show that such bifurcation occurs in the distributed model as well.  \\
\\
{\bf Keywords:} Continuous structured population models, Distributed states-at-birth, Finite difference
approximations, Convergence theory, existence and uniqueness of solutions.
\end{quote}

\baselineskip=18 pt

\section{\large Introduction}\label{}\medskip

Continuous structured population models are frequently used to study fundamental questions of population dynamics, see
e.g.~\cite{ADT, AF, AB, AMT, CS, JC1, JC2, Model, FH, MD, ST}. These models assume that individuals are distinguished
from one another by
characteristics such as body length, height, weight, maturity level, or age etc. These characteristics are often
referred to as size in general. In the classical one-dimensional case, size-structured models are formulated in terms
of a
nonlocal hyperbolic partial differential equation (PDE) describing the dynamics of the density $u(x,t)$ together with
an initial value $u_{0}(x)$ and a boundary condition at
$x=x_{0}$. Here $x$ is the structuring variable size. The boundary condition describes the inflow of newborns in the
population.
In most of these models, it is assumed that all the newborns have the same size $x_{0}$. It is clear in the case when
$x$ represents age and $x_{0}=0$. However, this assumption is not appropriate for many phenomena.
For example, newborns of human beings and other mammals can have different body lengths and weights at birth. In cell
populations, where large enough cells with different sizes will divide into two new daughter cells through mitosis and
cytokinesis, there is no fixed size for the newly-divided daughter cell when joining the population. Another example
comes from modeling fragmentation and coagulation in systems of reacting polymers where aggregates of different sizes
coalesce to form larger clusters or break apart into smaller ones~\cite{AF2,LM, Wrzosek}. In all of these examples, the
recruitment cannot be accurately modeled by simply imposing one boundary condition at the $x_0$.

Population models with distributed states-at-birth thus were introduced and studied for example in~\cite{AF,CS, Model,
ST}. In \cite{CS} the authors considered a very general size-structured model where individuals
may be recruited into the population at different sizes. The recruitment of new individuals is demonstrated in the
partial differential equation and modeled by a Lipschitz operator. They studied well-posedness of the model and
established global existence and uniqueness of solutions utilizing results from the theory of nonlinear evolution
equations. In \cite{ST} the authors studied an age-size-structured population model which assumes that size-at-birth is
distributed. The authors proved the existence of unique solutions to the model using a contraction mapping argument.
The local asymptotic stability of equilibria is also discussed using results from the theory of strongly continuous
semigroups of bounded linear operators. Distributed recruitment terms also appear in structured population models
dealing with cell division~\cite{MD} and in modeling reacting polymers by means of fragmentation models~\cite{MLM}.

In this paper we consider the following nonlinear Gurtin-MacCamy type model with a distributed recruitment
term (see, e.g.,~\cite{Model}). In what follows we will use the abbreviation DSSM when referring to the model below.
\begin{equation}\label{model}
\begin{array}{ll}
\frac{\partial}{\partial t}p(s,t)+\frac{\partial}{\partial s}\left(\gamma(s,Q(t))p(s,t)\right) & \\
\hspace{1 in} =-\mu(s,Q(t))p(s,t)
+\int_{0}^{1} \beta(s,y,Q(t))p(y,t)dy,
& s\in (0,1), t\in (0,T), \\
\gamma(0,Q(t))p(0,t)=0, & t\in[0,T],\\
p(s,0)=p^{0}(s), & s\in[0,1].\\
\end{array}
\end{equation}
Here, $p(s,t)$ denotes the density of individuals of size $s$ at time $t$. Therefore, $Q(t)=\int_{0}^{1}p(s,t)ds$
provides the total population at time $t$. The functions $\gamma$ and $\mu$ represent the individual growth and
mortality rate, respectively. It is assumed that individuals may be recruited into the population at different sizes
with $\beta(s,y,Q)$ being the rate at which one individual of size $y$ gives birth to an individual of size $s$ when
the total population is $Q$. There is no-inflow of individuals
through the boundary $s= 0$ since $p(0, t) = 0$ for all $t \geq 0$.

In \cite{Model} the authors analyzed the asymptotic behavior of solutions of model~(\ref{model}) using positive
perturbation arguments and results from the spectral theory of positive semigroups. In \cite{AF} the question of the
existence of non-trivial steady states is studied based on the reformulation of the problem~(\ref{model}) as an
integral equation. However, to our knowledge, numerical schemes for computing approximate solutions of the
distributed-rate model~(\ref{model}) have not been developed. Thus, in this paper we focus on the development of finite
difference schemes to approximate the solution of model~(\ref{model}). Efficient schemes are essential for solving
optimal control problems or parameter estimation problems governed by model~(\ref{model}) as such problems require
solving the model numerous times before an optimal solution is obtained.

Furthermore, we establish a connection between the model~(\ref{model}) and the following classical size-structured
model which will be referred to as CSSM for abbreviation.
\begin{equation}\label{model_McKendrick}
\begin{array}{ll}
\frac{\partial}{\partial t}p(s,t)+\frac{\partial}{\partial s}\left(\gamma(s,Q(t))p(s,t)\right)=-\mu(s,Q(t))p(s,t),
& s\in (0,1), \quad t\in (0,T), \\
\gamma(0,Q(t))p(0,t)=\int_{0}^{1} \tilde \beta(y,Q(t))p(y,t)dy, & t\in[0,T],\\
p(s,0)=p^{0}(s), & s\in[0,1].\\
\end{array}
\end{equation}\\
Here $\tilde \beta$ is the fertility rate of individuals of size $y$ at population level $Q$ and the rest of the
functions and parameters have similar interpretations as in model~(\ref{model}). We show that as the distribution of
the new recruits become concentrated at the smallest size, the weak solution of \eqref{model} converge in the weak*
topology to the weak solution of \eqref{model_McKendrick}. To our knowledge, this is the first theoretical result that
connects the two models.

This paper is organized as follows. Assumptions and notation are introduced in Section 2. In Section 3, we present a
first order explicit upwind scheme for solving the DSSM and prove its convergence to a unique weak solution with
bounded total variation. In Section 4 we present a second order explicit finite difference scheme and prove its
convergence. In Section 5 we establish the connection between DSSM and CSSM. Section 6 is devoted to numerical
simulations and to the construction of a simple example in which supercritical Hopf-bifurcation occurs. We give
concluding remarks in Section 7.

\section{\large Assumptions and notation}\label{sec_assumption}

\noindent
Let $\mathbb{D}_1=[0,1]\times[0,\infty)$ and ${\mathbb{D}_2}=[0,1]\times[0,1]\times[0,\infty)$.
Let $c$ be a sufficiently large positive constant. Throughout the paper we impose the following regularity conditions
on the functions involved in the DSSM.
\begin{itemize}
\item[(H1)]  $ \gamma(s,Q)$ is continuously differentiable with respect to $s$ and $Q$, $\gamma_{s}(s,Q)$ and
    $\gamma_{Q}(s,Q)$ are Lipschitz continuous in $s$ with Lipschitz constant $c$, uniformly in $Q$. Moreover,
    $0<\gamma(s,Q)\leq c$ for $s\in[0,1)$ and $\gamma(1,Q)=0$.
\item[(H2)] $ 0\leq \mu(s,Q)\leq c$, $\mu$ is Lipschitz continuous in $s$ and $Q$ with Lipschitz constant $c$.
\item[(H3)]$ 0\leq \beta(s,y,Q)\leq c $, $\beta(s,y,Q)$ is Lipschitz continuous in $Q$ with Lipschitz constant $c$,
    uniformly in $s$ and $y$. Moreover, for every partition $\{{s_i}\}_{i=1}^N$ of $[0,1]$, we have
\begin{equation*}
\sup_{(y,Q) \in [0,1]\times[0,\infty)} \sum_{i=1}^{N}|\beta(s_{i}, y, Q)-\beta(s_{i-1}, y, Q)|\leq c.
\end{equation*}
\item[(H4)]  $p^0 \in BV([0,1])$, where $BV$ stands for the space of functions with bounded total variation, and $
    p^0(s)\geq0$.
\end{itemize}
Now we give the definition of a weak solution to the DSSM as follows.
\begin{dfn}
A function $p\in BV ([0,1]\times[0,T] )$ is called a weak solution of the DSSM model \eqref{model} if it satisfies:
\begin{equation}\label{weak solution of DSSM}
\begin{array}{ll}
\int _0^1p(s,t)\phi(s,t)ds-\int _0^1p^0(s)\phi(s,0)ds\\
=\int _0^t\int _0^1  p(s,\tau)[\phi _\tau(s,\tau) +\gamma (s,Q(\tau))\phi _s(s,\tau)-\mu(s,Q(\tau))\phi(s,\tau)]dsd\tau
\\
\quad + \int _0^t\int _0^1\int _0^1 \beta(s,y,Q(\tau))p(y,\tau)\phi(s,\tau)dydsd\tau
\end{array}
\end{equation}
for every test function $\phi \in C^1\left( [0,1] \times [0,T]\right)$ and $t \in [0,T]$.\\
\end{dfn}
\noindent
Suppose that the intervals $[0,1]$ and $[0,T]$ are divided into $N$ and $L$ subintervals,
respectively. The following notations will be used throughout the
paper: $\Delta s=1/N$ and $\Delta t=T/L$. The discrete mesh points are given by $s_{i}=
i\Delta s$, $t_{k}= k\Delta t$ for $i=0,1,\cdots, N$, $k=0,1, \cdots, L$. For ease of
notation, we take a uniform mesh with constant sizes $\Delta s$ and $\Delta t$. More general nonuniform meshes can be
similarly
considered. We shall denote by $p_{i}^k$ and $Q^k$ the finite difference approximation of $p(s_{i},t_{k})$ and
$Q(t_{k})$, respectively.
We also let
\begin{equation*}
\begin{array}{ll}
\gamma_{i}^k=\gamma(s_{i}, Q^k),\hspace{0.14in}
\mu_{i}^k=\mu(s_{i},Q^k),\hspace{0.14in}
\beta_{i,j}^k=\beta(s_{i}, y_j, Q^{k}).\\
\end{array}
\end{equation*}
We define the $\ell^{1}$, $\ell^{\infty}$ norms and the TV (total variation) seminorm of the grid functions $p^k$ by
$$
\|p^k\|_{1}=\sum_{i=1}^{N}|p_{i}^k|\Delta s,\hspace{0.4in}
\|p^k\|_{\infty}=\max_{0\leq i\leq N}|p_{i}^k|,\hspace{0.4in}
TV(p^k)=\sum_{i=0}^{N-1}{|p_{i+1}^k-p_i^k|},
$$
and the finite difference operators by
\begin{eqnarray*}
\Delta_{+} p_{i}^{k}=p_{i+1}^{k}-p_{i}^{k}, \hspace{5 pt} 0\leqslant i \leqslant N-1,\quad\quad
\Delta_{-}p_{i}^{k}=p_{i}^{k}-p_{i-1}^{k},\hspace{5 pt} 1\leqslant i\leqslant N.
\end{eqnarray*}
Throughout the discussion, we impose the following CFL condition concerning $\Delta s$ and $\Delta t$:\\
$\begin{array}{ll}
(H5)\quad c \frac{3\Delta t}{2\Delta s}+ c\Delta t\leqslant 1.
\end{array}$

\section{\large A first order upwind scheme}\label{sec_upwind}\medskip
\noindent
We first discretize model~(\ref{model}) using the following first order explicit upwind scheme:
\begin{equation}\label{first order 1}
\begin{array}{ll}
\begin{array}{ll}
\frac{p_i^{k+1}-p_i^k}{\Delta t}+\frac{\gamma _i^k p_i^k -\gamma_{i-1}^{k}p_{i-1}^{k}}{\Delta s} = -\mu
_i^kp_i^k+\sum_{j=1}^{N} \beta_{i,j}^{k}p_{j}^{k} \Delta s, & 1\leq i\leq N,\quad 0 \leq k \leq L-1,\\
\gamma_ 0^k p_0^k=0, & 0 \leq k \leq L,\\
p_i^0=p^0(s_i), & 0 \leq i \leq N,\\
\end{array}
\end{array}
\end{equation}\\
where the total population $Q^{k}$ is discretized by a right-hand sum $Q^{k}=\sum_{i=1}^{N} p_{i}^{k} \Delta s$.\\
\noindent
We can equivalently write the first equation in~(\ref{first order 1}) as follows:
\begin{equation}\label{first order 2}
p_i^{k+1}=\frac{\Delta t}{\Delta s} \gamma _{i-1}^{k} p_{i-1}^k +\left(1-\frac{\Delta t}{\Delta s} \gamma _i^k -\mu
_i^k \Delta t\right)p_i^k + \left(\sum_{j=1}^{N} \beta _{i,j}^{k} p_j^k \Delta s \right) \Delta t,\hspace{20pt} 1 \leq
i \leq N, \quad 0 \leq k \leq L-1.\\
\end{equation}
The boundary condition $ \gamma(0,Q(t))p(0,t)=0 $ and assumption (H1)
imply that $p_{0}^{k}=0$ for $k\geq 0$. One can easily see that under assumptions (H1)-(H5), $p_i^{k+1}\geq 0$, for
$i=1,2,\cdots, N$ and $k=0,1,\cdots, L-1$. Therefore, the scheme~(\ref{first order 1}) has a unique nonnegative
solution.\\
\subsection{Estimates for the first order finite difference scheme}
In this section we use techniques similar to \cite{AI_1997,J_Smoller}. We begin by establishing an $\ell_1$ bound on
the approximations.
\begin{lem}\label{lem_FO1} The following estimate holds:
\begin{equation*}
\begin{array}{ll}
\Vert p^k \Vert _1 \leq \left(1+ c\Delta t\right) ^k \Vert p^0 \Vert _1 \leq \left(1+ c\Delta t\right) ^L \Vert p^0
\Vert _1\leq \exp(cT)\Vert p^0 \Vert _1\equiv M_1, \quad k=0,1,\cdots, L. \\
\end{array}
\end{equation*}
\end{lem}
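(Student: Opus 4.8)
The plan is to derive a recursive inequality $\Vert p^{k+1}\Vert_1 \le (1+c\Delta t)\Vert p^k\Vert_1$ and then iterate it $k$ times. First I would start from the explicit form \eqref{first order 2}, take absolute values on both sides, and sum over $i=1,\dots,N$ after multiplying by $\Delta s$. The key structural fact making this work is the CFL condition (H5): since $c\frac{3\Delta t}{2\Delta s}+c\Delta t\le 1$ implies in particular $\frac{\Delta t}{\Delta s}\gamma_i^k+\mu_i^k\Delta t\le c\frac{\Delta t}{\Delta s}+c\Delta t\le 1$ (using $0\le\gamma\le c$, $0\le\mu\le c$ from (H1)–(H2)), the coefficient $1-\frac{\Delta t}{\Delta s}\gamma_i^k-\mu_i^k\Delta t$ multiplying $p_i^k$ is nonnegative, and so is the coefficient $\frac{\Delta t}{\Delta s}\gamma_{i-1}^k$ of $p_{i-1}^k$; the recruitment coefficient is nonnegative by (H3). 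Hence the triangle inequality is in fact an equality up to the sign of $p_i^k$, and — recalling that $p_i^k\ge0$ was already established — no cancellation is lost.

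Next I would collect the three resulting sums. The growth terms telescope: $\sum_{i=1}^N\left(\frac{\Delta t}{\Delta s}\gamma_{i-1}^k p_{i-1}^k + (1-\frac{\Delta t}{\Delta s}\gamma_i^k)p_i^k\right)\Delta s$ equals $\sum_{i=1}^N p_i^k\Delta s + \frac{\Delta t}{\Delta s}\left(\sum_{i=0}^{N-1}\gamma_i^k p_i^k - \sum_{i=1}^N\gamma_i^k p_i^k\right)\Delta s$; the difference of the two $\gamma$-sums is $\gamma_0^k p_0^k - \gamma_N^k p_N^k$, and both boundary terms vanish — the first because $p_0^k=0$, the second because $\gamma(1,Q)=0$ by (H1). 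So the growth contribution is exactly $\Vert p^k\Vert_1$. The mortality term $-\sum_i\mu_i^k p_i^k\Delta t\,\Delta s\le 0$ is simply dropped. For the recruitment term I would use (H3), namely $0\le\beta_{i,j}^k\le c$, to bound $\sum_{i=1}^N\left(\sum_{j=1}^N\beta_{i,j}^k p_j^k\Delta s\right)\Delta t\,\Delta s \le c\,\Delta t\sum_{j=1}^N p_j^k\Delta s\cdot\sum_{i=1}^N\Delta s = c\,\Delta t\,\Vert p^k\Vert_1$ (since $\sum_{i=1}^N\Delta s = 1$). Combining gives $\Vert p^{k+1}\Vert_1\le(1+c\Delta t)\Vert p^k\Vert_1$.

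Finally, iterating the recursion yields $\Vert p^k\Vert_1\le(1+c\Delta t)^k\Vert p^0\Vert_1$; monotonicity in $k$ gives the bound by $(1+c\Delta t)^L$, and the elementary inequality $(1+x)^L\le e^{Lx}$ with $L\Delta t=T$ gives $(1+c\Delta t)^L\le\exp(cT)$, completing the chain of inequalities and defining $M_1$. I do not anticipate a serious obstacle here; the only point requiring care is making sure the scheme coefficients are genuinely nonnegative so that taking absolute values does not destroy the telescoping — this is exactly where (H5) and the boundary behavior $\gamma(1,Q)=0$, $p_0^k=0$ are used, and these have all been set up in the preceding text.
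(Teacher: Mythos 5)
Your argument is correct and follows essentially the same route as the paper: multiply the explicit update \eqref{first order 2} by $\Delta s$, sum over $i$, telescope the growth terms using $p_0^k=0$ and $\gamma(1,Q)=0$ (the paper simply drops the nonnegative outflow term $\gamma_N^k p_N^k$), discard the mortality term, bound the recruitment term by $c\Delta t\Vert p^k\Vert_1$ via (H3), and iterate the resulting recursion with $(1+c\Delta t)^L\le \exp(cT)$. Your extra remarks about coefficient nonnegativity under (H5) are harmless but not needed here, since nonnegativity of $p_i^k$ was already established and the summed identity holds exactly.
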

\begin{proof}
Multiplying~(\ref{first order 2}) by $\Delta s$ and summing over $i=1,2,\cdots ,N$, we have
\begin{equation*}
\sum_{i=1}^{N}p_i^{k+1} \Delta s =\sum_{i=1}^{N}p_i^k\Delta s-\Delta t\sum_{i=1}^{N} (\gamma _i^k p_i^k
-\gamma_{i-1}^{k}p_{i-1}^{k})-\sum_{i=1}^{N}p_i^k\mu_i^k \Delta s \Delta
t+\sum_{i=1}^{N}\left(\sum_{j=1}^{N}\beta_{i,j}^k p_j^k\Delta s \right) \Delta s \Delta t.
\end{equation*}
Therefore by assumptions (H1)-(H3) and the second equation in~(\ref{first order 1})
\begin{equation*}
\begin{array}{ll}
\Vert p^{k+1}\Vert_1 &\leq \Vert p^k \Vert _1 -\Delta t \left(\gamma_N^k p_N^k -\gamma_0^k p_0^k \right)+c \Vert p^k
\Vert _1 \Delta t\\
&=(1+c \Delta t) \Vert p^k \Vert _1,
\end{array}
\end{equation*}
which then implies the estimate.
\end{proof}
Note that $Q^k=\sum_{i=1}^{k}p_i^k\Delta s=\|p^k\|_1\leq M_1$. We now define $\mathbb{D}_3=[0,1]\times[0, M_1]$.
\begin{lem}\label{lem_FO2}
The following estimate holds:
\begin{equation*}
\begin{array}{ll}
\Vert p^k \Vert _\infty  \leq \left(1+ 2c\Delta t\right) ^k \Vert p^0 \Vert _\infty \leq \left(1+ 2c\Delta t\right) ^L
\Vert p^0 \Vert _\infty\leq \exp(2cT)\Vert p^0 \Vert _\infty, \quad k=0,1,\cdots, L.
\end{array}
\end{equation*}
\end{lem}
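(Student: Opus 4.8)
The plan is to mimic the $\ell^1$ estimate from Lemma~\ref{lem_FO1}, but now working pointwise with the explicit update formula~(\ref{first order 2}) rather than summing over $i$. First I would rewrite~(\ref{first order 2}) as a convex-type combination:
\begin{equation*}
p_i^{k+1}=\frac{\Delta t}{\Delta s}\gamma_{i-1}^k p_{i-1}^k+\left(1-\frac{\Delta t}{\Delta s}\gamma_i^k-\mu_i^k\Delta t\right)p_i^k+\left(\sum_{j=1}^N\beta_{i,j}^k p_j^k\Delta s\right)\Delta t.
\end{equation*}
Under the CFL condition (H5), which gives $\frac{\Delta t}{\Delta s}\gamma_i^k+\mu_i^k\Delta t\le c\frac{3\Delta t}{2\Delta s}+c\Delta t\le 1$, together with $0<\gamma\le c$ and $\mu\ge 0$ from (H1)--(H2), the coefficients of $p_{i-1}^k$ and $p_i^k$ are nonnegative. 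Taking absolute values and using the triangle inequality, the first two terms are bounded by $\left(\frac{\Delta t}{\Delta s}\gamma_{i-1}^k+1-\frac{\Delta t}{\Delta s}\gamma_i^k-\mu_i^k\Delta t\right)\max\{\|p^k\|_\infty\}$; here I would need the elementary bound $\frac{\Delta t}{\Delta s}(\gamma_{i-1}^k-\gamma_i^k)\le \frac{\Delta t}{\Delta s}|\gamma_i^k-\gamma_{i-1}^k|\le \frac{\Delta t}{\Delta s}\cdot c\Delta s=c\Delta t$, using the Lipschitz continuity of $\gamma$ in $s$ from (H1). Hence the first two terms contribute at most $(1+c\Delta t)\|p^k\|_\infty$.

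Next I would handle the recruitment term: since $0\le\beta_{i,j}^k\le c$ by (H3),
\begin{equation*}
\left|\left(\sum_{j=1}^N\beta_{i,j}^k p_j^k\Delta s\right)\Delta t\right|\le c\Delta t\sum_{j=1}^N|p_j^k|\Delta s=c\Delta t\,\|p^k\|_1\le c\Delta t\,\|p^k\|_\infty,
\end{equation*}
where the last step uses $\|p^k\|_1=\sum_i|p_i^k|\Delta s\le \max_i|p_i^k|\cdot\sum_i\Delta s=\|p^k\|_\infty$ since the mesh points sum to $1$. Combining, $|p_i^{k+1}|\le(1+2c\Delta t)\|p^k\|_\infty$ for each $i=1,\dots,N$, and since $p_0^{k+1}=0$ this gives $\|p^{k+1}\|_\infty\le(1+2c\Delta t)\|p^k\|_\infty$. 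Iterating this one-step inequality from $k$ down to $0$ yields $\|p^k\|_\infty\le(1+2c\Delta t)^k\|p^0\|_\infty$, and then $(1+2c\Delta t)^k\le(1+2c\Delta t)^L\le\exp(2c\Delta t L)=\exp(2cT)$ completes the chain of inequalities.

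The only mild subtlety — and the step I would be most careful about — is ensuring the coefficient of $p_i^k$ in the update is indeed nonnegative so that passing to absolute values does not lose a factor; this is exactly what (H5) guarantees, and it is worth writing out that $1-\frac{\Delta t}{\Delta s}\gamma_i^k-\mu_i^k\Delta t\ge 1-c\frac{3\Delta t}{2\Delta s}-c\Delta t\ge 0$. Everything else is a routine telescoping-free pointwise estimate, so I do not anticipate any real obstacle; the proof is essentially three lines once the convex-combination structure and the bound $\|p^k\|_1\le\|p^k\|_\infty$ are in hand.
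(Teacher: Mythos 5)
Your proposal is correct and follows essentially the same route as the paper: rewrite the scheme in the convex-combination form \eqref{first order 2}, use (H1), (H5) and the Lipschitz bound $\frac{\Delta t}{\Delta s}(\gamma_{i-1}^k-\gamma_i^k)\le c\Delta t$ to control the transport part by $(1+c\Delta t)\Vert p^k\Vert_\infty$, and bound the recruitment term by $c\Delta t\Vert p^k\Vert_1\le c\Delta t\Vert p^k\Vert_\infty$, giving $(1+2c\Delta t)$ per step. Your explicit remarks on the nonnegativity of the coefficient of $p_i^k$ and on $\Vert p^k\Vert_1\le\Vert p^k\Vert_\infty$ are only spelled out more carefully than in the paper, not a different argument.
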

\begin{proof}
Since $p_0^k=0$ for $k\geq0$, $\Vert p^{k+1} \Vert _\infty$ is obtained at $p_i^{k+1}$ for some $1 \leq i \leq N $.\\
From~(\ref{first order 2}) and assumptions (H1), (H3) and (H5) we have
\begin{equation*}
\begin{array}{ll}
\Vert p^{k+1} \Vert _{\infty} &\leq \frac{\Delta t}{\Delta s} \gamma_{i-1}^k \Vert p^k \Vert _{\infty} +\left(
1-\frac{\Delta t}{\Delta s} \gamma_i^k -\mu_i^k \Delta t \right) \Vert p^k \Vert_{\infty}+c \Vert
p^k\Vert_{\infty}\Delta t \\
&\leq \Vert p^k\Vert _{\infty} +\sup_{\mathbb{D}_3}\vert \gamma_s \vert \Vert p^k \Vert _{\infty} \Delta t +c \Vert p^k
\Vert _{\infty}\Delta t\\
&\leq (1+2c\Delta t) \Vert p^k \Vert _{\infty}.
\end{array}
\end{equation*}
\end{proof}

\begin{lem} \label{lem_FO3}
There exists a positive constant $M_2$ such that
$TV (p^k)\leq M_2$, $k=0,1,\cdots,L$.
\end{lem}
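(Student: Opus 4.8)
The plan is to derive a discrete Gronwall-type recursion for $TV(p^k) = \sum_{i=0}^{N-1}|p_{i+1}^k - p_i^k|$ directly from the update formula (\ref{first order 2}). First I would write down the difference $\Delta_+ p_i^{k+1} = p_{i+1}^{k+1} - p_i^{k+1}$ by subtracting (\ref{first order 2}) at index $i$ from the same at index $i+1$. Grouping terms, $\Delta_+ p_i^{k+1}$ becomes a combination of $\Delta_+ p_{i-1}^k$ (with coefficient $\frac{\Delta t}{\Delta s}\gamma_{i-1}^k$) and $\Delta_+ p_i^k$ (with coefficient $1 - \frac{\Delta t}{\Delta s}\gamma_i^k - \mu_i^k\Delta t$), plus "error" terms coming from the fact that the coefficients $\gamma$, $\mu$, $\beta$ themselves vary with $i$ — i.e. terms like $\frac{\Delta t}{\Delta s}(\gamma_i^k - \gamma_{i-1}^k)p_i^k$, $(\mu_{i+1}^k - \mu_i^k)\Delta t\, p_{i+1}^k$, and $\Delta t\Delta s\sum_j(\beta_{i+1,j}^k - \beta_{i,j}^k)p_j^k$. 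Under the CFL condition (H5) the two main coefficients are nonnegative and sum to $1 - \mu_i^k\Delta t \le 1$, so taking absolute values and summing over $i$, the main part contributes at most $TV(p^k)$ (after a telescoping/reindexing of the two shifted sums, which is where the extra boundary-adjacent terms must be tracked carefully).

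Next I would bound each family of error terms after summing over $i$. The growth-coefficient errors sum to at most $\frac{\Delta t}{\Delta s}\sum_i |\gamma_i^k - \gamma_{i-1}^k|\,|p_i^k|$; since $\gamma$ is Lipschitz in $s$ by (H1), $|\gamma_i^k - \gamma_{i-1}^k| \le c\Delta s$, so this is $\le c\Delta t \sum_i |p_i^k| = c\Delta t\,\|p^k\|_1 \le c\Delta t\, M_1$ — wait, more carefully one wants this controlled by $\|p^k\|_\infty$ times a constant, or by $\|p^k\|_1/\Delta s$; here the $\Delta s$ cancels so it is $\le c\,\|p^k\|_1\,\Delta t \le cM_1\Delta t$, a constant times $\Delta t$. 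Actually the cleaner route: $\frac{\Delta t}{\Delta s}\sum_i|\gamma_i^k-\gamma_{i-1}^k||p_i^k| \le \frac{\Delta t}{\Delta s}\cdot c\Delta s\cdot N\|p^k\|_\infty$ is too lossy; instead use $\sum_i|p_i^k| = \|p^k\|_1/\Delta s \cdot \Delta s$... the point is one keeps it as $c\Delta t\,\|p^k\|_1$, bounded by Lemma \ref{lem_FO1}. The mortality errors sum to $\le \Delta t\sum_i|\mu_{i+1}^k-\mu_i^k||p_{i+1}^k| \le c\Delta t\,\|p^k\|_1 \le cM_1\Delta t$ by (H2). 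The recruitment errors sum to $\Delta t\Delta s\sum_i\sum_j|\beta_{i+1,j}^k-\beta_{i,j}^k||p_j^k|$; here I invoke the partition hypothesis in (H3), which says $\sum_i|\beta_{i+1,j}^k - \beta_{i,j}^k| \le c$ uniformly in $j$ and $Q$, so this is $\le c\Delta t\Delta s\sum_j|p_j^k| = c\Delta t\,\|p^k\|_1 \le cM_1\Delta t$. One also has to handle the terms at $i=0$ and $i=N$ separately — e.g. $|p_1^{k+1}|$ enters $\Delta_+ p_0^{k+1}$ and must be bounded using $\|p^{k+1}\|_\infty$ from Lemma \ref{lem_FO2}, and similarly at the right boundary where $\gamma_N^k = 0$; these contribute only $O(\Delta t)$ terms with constants depending on $M_1, M_2'$ via the $\ell^\infty$ bound. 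Collecting everything yields
\[
TV(p^{k+1}) \le (1 + C_1\Delta t)\,TV(p^k) + C_2\Delta t
\]
for constants $C_1, C_2$ depending only on $c$, $T$, $M_1$, and $\exp(2cT)\|p^0\|_\infty$. Iterating this discrete Gronwall inequality gives $TV(p^k) \le e^{C_1 T}\big(TV(p^0) + C_2 T/C_1\big)$ — wait, more precisely $TV(p^k) \le (1+C_1\Delta t)^k TV(p^0) + C_2\Delta t\sum_{j=0}^{k-1}(1+C_1\Delta t)^j \le e^{C_1T}TV(p^0) + \frac{C_2}{C_1}(e^{C_1T}-1)$, and since $p^0 \in BV([0,1])$ by (H4), $TV(p^0) < \infty$; this defines the constant $M_2$.

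The main obstacle I anticipate is the bookkeeping of the boundary-adjacent indices when reindexing the two shifted sums $\sum_i \frac{\Delta t}{\Delta s}\gamma_{i-1}^k|\Delta_+ p_{i-1}^k|$ and $\sum_i(1 - \frac{\Delta t}{\Delta s}\gamma_i^k - \mu_i^k\Delta t)|\Delta_+ p_i^k|$ so that they combine into (something bounded by) $TV(p^k)$ rather than $TV(p^k)$ plus uncontrolled boundary junk. The key facts that make it work are: (i) $p_0^k = 0$, so the term at the left end is benign; (ii) $\gamma(1,Q) = 0$ from (H1), so $\gamma_N^k = 0$ kills the problematic right-boundary flux term; and (iii) the CFL condition (H5) — with the factor $\frac{3\Delta t}{2\Delta s}$, which leaves room beyond the bare $\frac{\Delta t}{\Delta s}\gamma \le 1$ needed here (the extra slack is presumably used by the second-order scheme in Section 4) — guarantees the convex-combination structure so no absolute value introduces a sign error. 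The recruitment term's TV-in-$s$ control is entirely handled by the nonstandard summation hypothesis built into (H3), so that presents no real difficulty once one recognizes it is exactly what is needed.
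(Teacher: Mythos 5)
Your overall strategy (a recursion $TV(p^{k+1})\le(1+C_1\Delta t)TV(p^k)+C_2\Delta t$ obtained from the update formula, followed by discrete Gronwall, with the boundary terms tamed by $p_0^k=0$, $\gamma_N^k=0$ and (H5), the mortality error by (H2), and the recruitment error by the summation hypothesis in (H3)) is exactly the paper's, and those parts are sound. The genuine gap is in your treatment of the growth-coefficient error. With your grouping, the flux errors are of the form $\frac{\Delta t}{\Delta s}\,|\gamma_i^k-\gamma_{i-1}^k|\,p_i^k$, and your bound of their sum by $c\Delta t\,\|p^k\|_1$ is an arithmetic slip: the Lipschitz estimate gives a per-term bound $c\Delta t\,p_i^k$, but $\sum_i p_i^k=\|p^k\|_1/\Delta s$, not $\|p^k\|_1$, so the sum is only bounded by $c\,\frac{\Delta t}{\Delta s}\,\|p^k\|_1$, which under the CFL condition is $O(1)$, not $O(\Delta t)$ (your alternative route via $N\|p^k\|_\infty$ suffers the same loss, as you half-noticed before waving it away). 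An $O(1)$ additive term in the recursion is fatal: iterating over $L=T/\Delta t$ steps gives $TV(p^k)\lesssim TV(p^0)+C\,T/\Delta t$, which is not a mesh-uniform bound.

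The fix is precisely the finer decomposition the paper uses: instead of pulling out each first difference of $\gamma$ times $p_i^k$ separately, pair the two adjacent coefficient errors so that what survives is $\frac{\Delta t}{\Delta s}(\gamma_i^k-\gamma_{i-1}^k)(p_i^k-p_{i-1}^k)$, which contributes $c\Delta t\,TV(p^k)$ to the $(1+c_1\Delta t)$ factor, plus $\frac{\Delta t}{\Delta s}\bigl[(\gamma_{i+1}^k-\gamma_i^k)-(\gamma_i^k-\gamma_{i-1}^k)\bigr]p_i^k$, a \emph{second} difference of $\gamma$. To see that this second-difference term is $O(\Delta t\,\Delta s\,p_i^k)$ (hence summable to $O(\Delta t\,\|p^k\|_1)$) you must use the mean value theorem together with the Lipschitz continuity of $\gamma_s$ in $s$, which is part of (H1) but which your argument never invokes — plain Lipschitz continuity of $\gamma$ only gives $O(\Delta s)$ for the second difference, which is not enough against the $\frac{1}{\Delta s}$ prefactor. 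So the missing ingredient is both the regrouping into (difference of $\gamma$)$\times$(difference of $p$) plus (second difference of $\gamma$)$\times p_i^k$, and the use of the $C^{1,1}$-type regularity of $\gamma$ to control the latter.
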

\begin{proof}
From the first equation in~(\ref{first order 1}) we have
\begin{equation*}
\begin{array}{ll}
p_{i+1}^{k+1}-p_i^{k+1} &= \left(p_{i+1}^k-p_i^k \right)-\frac{\Delta t}{\Delta s}\left[\left(\gamma_{i+1}^k p_{i+1}^k
-\gamma_i^k p_i^k \right)-\left(\gamma_i^k p_i^k -\gamma_{i-1}^k p_{i-1}^k \right) \right]\\
&\quad -\Delta t\left(\mu_{i+1}^k p_{i+1}^k -\mu_i^k p_i^k \right) +\sum_{j=1}^{N}\left(\beta_{i+1,j}^k -\beta_{i,j}^k
\right)p_j^k \Delta s \Delta t.
\end{array}
\end{equation*}
Simple calculations yield
\begin{equation*}
\begin{array}{ll}
\left(\gamma_{i+1}^k p_{i+1}^k -\gamma_i^k p_i^k \right)-\left(\gamma_i^k p_i^k -\gamma_{i-1}^k p_{i-1}^k \right) \\
 =\gamma_{i+1}^k\left(p_{i+1}^k -p_i^k \right)+\left(\gamma_{i+1}^k -\gamma_i^k \right) p_i^k -\gamma_i^k \left( p_i^k
 - p_{i-1}^k \right)-\left(\gamma_i^k-\gamma_{i-1}^k \right)p_i^k \\
 =\gamma_{i+1}^k \left(p_{i+1}^k -p_i^k \right)-\gamma_i^k\left(p_i^k -p_{i-1}^k \right)+\left(\gamma_i^k
 -\gamma_{i-1}^k \right)\left(p_i^k - p_{i-1}^k \right)\\
 \quad + \left[\left(\gamma_{i+1}^k -\gamma_i^k \right)-\left(\gamma_i^k-\gamma_{i-1}^k \right) \right] p_i^k.
\end{array}
\end{equation*}
Therefore, for $1\leq i\leq N-1$
\begin{equation}\label{p_difference}
\begin{array}{ll}
p_{i+1}^{k+1}-p_i^{k+1} &=\left(1-\frac{\Delta t}{\Delta s}\gamma_{i+1}^k \right)\left(p_{i+1}^k -p_i^k
\right)+\frac{\Delta t}{\Delta s} \gamma_i^k\left(p_i^k-p_{i-1}^k \right)-\frac{\Delta t}{\Delta
s}\left(\gamma_i^k-\gamma_{i-1}^k \right)\left(p_i^k -p_{i-1}^k \right) \\
&\quad -\frac{\Delta t}{\Delta s}\left[ \left(\gamma_{i+1}^k -\gamma_i^k \right)-\left(\gamma_i^k-\gamma_{i-1}^k
\right) \right]p_i^k -\Delta t \left(\mu_{i+1}^k p_{i+1}^k -\mu_i^k p_i^k \right)\\
&\quad +\sum_{j=1}^N \left(\beta_{i+1,j}^k -\beta_{i,j}^k \right) p_j^k \Delta s \Delta t.
\end{array}
\end{equation}
Summing~(\ref{p_difference}) over $i=0,1,\cdots, N-1$ and applying assumptions (H1) and (H5) we arrive at
\begin{equation}
\begin{array}{ll}\label{TV_1}
TV(p^{k+1}) &= \vert p _{1}^{k+1}-p _0^{k+1} \vert + \sum _{i=1}^{N-1} \vert p_{i+1}^{k+1} - p_i^{k+1} \vert \\
&= p_1^{k+1} + \sum_{i=1}^{N-1} \vert p_{i+1}^k-p_i^k \vert -\frac{\Delta t}{\Delta s} \sum_{i=1}^{N-1}
\left(\gamma_{i+1}^k \vert p_{i+1}^k -p_i^k\vert -\gamma_i^k \vert p_i^k -p_{i-1}^k \vert \right)\\
&\quad+\frac{\Delta t}{\Delta s}\sum_{i=1}^{N-1}\vert \gamma_i^k -\gamma_{i-1}^k \vert \vert p_i^k -p_{i-1}^k
\vert+\frac{\Delta t}{\Delta s}\sum_{i=1}^{N-1} \left| \left( \gamma_{i+1}^k-\gamma_i^k \right) -\left(\gamma_i^k
-\gamma_{i-1}^k \right) \right| p_i^k\\
&\quad+ \Delta t \sum_{i=1}^{N-1} \left| \mu_{i+1}^kp_{i+1}^k-\mu_i^k p_i^k \right|+\sum_{i=1}^{N-1} \sum_{j=1}^N
\left|\beta_{i+1,j}^k-\beta_{i,j}^k  \right| p_j^k \Delta s \Delta t.
\end{array}
\end{equation}
By~(\ref{first order 2}) and assumptions (H1)-(H3),
\begin{equation}\label{FO3_1}
\begin{array}{ll}
p_1^{k+1}&=\frac{\Delta t}{\Delta s} \gamma_0^k p_0^k+\left(1-\frac{\Delta t}{\Delta s} \gamma_1^k-\mu_1^k \Delta t
\right) p_1^k+\left(\sum_{j=1}^{N}\beta_{1,j}^k p_j^k \Delta s \right) \Delta t \\
&\leq p_1^k -\frac{\Delta t}{\Delta s} \gamma_1^k p_1^k +c\| p^k\|_1 \Delta t.
\end{array}
\end{equation}
It can be seen from assumption (H1) that
\begin{equation}\label{FO3_2}
\begin{array}{ll}
\sum_{i=1}^{N-1}\left( \gamma_{i+1}^k | p_{i+1}^k -p_i^k|-\gamma_i^k| p_i^k-p_{i-1}^k|
\right)=\gamma_N^k|p_N^k-p_{N-1}^k |-\gamma_1^k |p_1^k-p_0^k|=-\gamma_1^kp_1^k,
\end{array}
\end{equation}
and
\begin{equation}\label{FO3_3}
\begin{array}{ll}
\frac{\Delta t}{\Delta s} \sum_{i=1}^{N-1} |(\gamma_{i+1}^k -\gamma_i^k )-(\gamma_i^k -\gamma_{i-1}^k)|p_i^k\\
=\frac{\Delta t}{\Delta s}\sum_{i=1}^{N-1} |\gamma_s(\hat{s}_{i+1},Q^k)-\gamma_s(\hat{s}_i,Q^k) | p_i^k \Delta s\\
\leq \Delta t \sum_{i=1}^{N-1} 2c p_i^k\Delta s \leq 2c \| p^k \|_1 \Delta t,
\end{array}
\end{equation}
where $\hat{s}_{i}\in [s_{i-1}, s_{i}]$ and $\hat{s}_{i+1}\in [s_i, s_{i+1}]$.\\
By assumption (H2),
\begin{equation}\label{FO3_4}
\begin{array}{ll}
\sum_{i=1}^{N-1}\left|\mu_{i+1}^k p_{i+1}^k -\mu_i^k p_i^k \right| \Delta t\\
\leq\Delta t \sum_{i=1}^{N-1} |\mu_{i+1}^k -\mu_i^k| p_{i+1}^k+\sum_{i=1}^{N-1} \sup_{\mathbb{D}_3} \mu |p_{i+1}^k
-p_i^k | \Delta t\\
\leq c \| p^k \|_1 \Delta t +c \sum_{i=1}^{N-1} |p_{i+1}^k -p_i^k | \Delta t.
\end{array}
\end{equation}
By assumption (H3),
\begin{equation}\label{FO3_5}
\begin{array}{ll}
\sum_{i=1}^{N-1}\sum_{j=1}^{N} \left| \beta_{i+1,j}^k -\beta_{i,j}^k \right| p_j^k \Delta s \Delta t
=\sum_{j=1}^{N}\left( \sum_{i=1}^{N-1} \left| \beta_{i+1,j}^k -\beta_{i,j}^k \right|\right) p_j^k \Delta s \Delta t
=c \| p^k \| _1 \Delta t.
\end{array}
\end{equation}
A combination of~(\ref{TV_1})-(\ref{FO3_5}) then yields
\begin{equation}
\begin{array}{ll}
TV(p^{k+1})&\leq p_1^k -\frac{\Delta t}{\Delta s}\gamma_1^k p_1^k+c\|p^k\|_1 \Delta t+ \sum_{i=1}^{N-1}|p_{i+1}^k
-p_i^k |+\frac{\Delta t}{\Delta s} \gamma_1^k p_1^k+\frac{\Delta t}{\Delta s}|\gamma_s| \Delta s\sum_{i=1}^{N-1} |p_i^k
-p_{i-1}^k|\\
& \quad +c\| p^k\|_1\Delta t+c\| p^k\|_1\Delta t+c\sum_{i=1}^{N-1}|p_{i+1}^k -p_i^k |\Delta t+2c\| p^k\|_1 \Delta t.
\end{array}
\end{equation}
Therefore, from assumption (H1), Lemmas~\ref{lem_FO1} and~\ref{lem_FO2} there exist positive constants $c_1$ and $c_2$
such that
\begin{equation*}
\begin{array}{ll}
TV(p^{k+1})\leq (1+c_1 \Delta t) TV(p^k) +c_2 \Delta t,
\end{array}
\end{equation*}
which leads to the desired result.
\end{proof}

\begin{lem} \label{lem_FO4}
There exists a positive constant $M_3$ such that for any $q_1 > q_2>0$ the following estimate holds:
$$\sum_{i=1}^{N}\left| \frac{p_i^{q_1}-p_i^{q_2}}{\Delta t}\right| \Delta s \leq M_3 (q_1-q_2).$$
\end{lem}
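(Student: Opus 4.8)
\textit{Proof proposal.} The plan is to reduce the claim to a uniform one-step bound $\Vert p^{k+1}-p^k\Vert_1\le M_3\,\Delta t$ (with $M_3$ independent of $k$, $\Delta s$, $\Delta t$) and then telescope. Starting from the explicit form \eqref{first order 2}, subtracting $p_i^k$ from both sides and collecting the two flux terms into a backward difference, one obtains for $1\le i\le N$
\begin{equation*}
p_i^{k+1}-p_i^k=-\frac{\Delta t}{\Delta s}\,\Delta_-\!\bigl(\gamma_i^k p_i^k\bigr)-\mu_i^k p_i^k\,\Delta t+\Bigl(\sum_{j=1}^N\beta_{i,j}^k p_j^k\,\Delta s\Bigr)\Delta t .
\end{equation*}
Multiplying by $\Delta s$, summing over $i=1,\dots,N$, and using the triangle inequality, the mortality term is bounded by $c\,\Vert p^k\Vert_1\,\Delta t\le cM_1\,\Delta t$ via (H2) and Lemma~\ref{lem_FO1}, and the recruitment term by $cM_1\,\Delta t$ after exchanging the order of summation and using $\sum_{i=1}^N\beta_{i,j}^k\,\Delta s\le c$ from (H3).

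The flux contribution is the only place where some care is needed: estimating $\sum_i|\Delta_-(\gamma_i^k p_i^k)|$ crudely through $\Vert p^k\Vert_\infty$ would cost a factor $N\sim\Delta s^{-1}$ and destroy the bound, so instead I would write $\Delta_-(\gamma_i^k p_i^k)=\gamma_i^k\,\Delta_- p_i^k+p_{i-1}^k\,\Delta_-\gamma_i^k$, which gives
\begin{equation*}
\frac{\Delta t}{\Delta s}\sum_{i=1}^N\bigl|\Delta_-(\gamma_i^k p_i^k)\bigr|\,\Delta s\le c\,\Delta t\sum_{i=1}^N\bigl|\Delta_- p_i^k\bigr|+\bigl(\sup_{\mathbb{D}_3}|\gamma_s|\bigr)\,\Delta t\,\Delta s\sum_{i=1}^N p_{i-1}^k .
\end{equation*}
Here $\sum_{i=1}^N|\Delta_- p_i^k|=TV(p^k)\le M_2$ by Lemma~\ref{lem_FO3}; $|\Delta_-\gamma_i^k|\le(\sup_{\mathbb{D}_3}|\gamma_s|)\,\Delta s$ for every $i$ (including $i=N$, since $\gamma(\cdot,Q)\in C^1[0,1]$) by (H1); and $\Delta s\sum_{i=1}^N p_{i-1}^k=\Delta s\sum_{i=0}^{N-1}p_i^k\le\Vert p^k\Vert_1\le M_1$ because $p_0^k=0$. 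Collecting the three estimates yields $\Vert p^{k+1}-p^k\Vert_1\le M_3\,\Delta t$ with, e.g., $M_3=cM_2+(\sup_{\mathbb{D}_3}|\gamma_s|)M_1+2cM_1$; its precise value is immaterial.

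Finally, for $q_1>q_2$ the triangle inequality together with the one-step bound gives
\begin{equation*}
\sum_{i=1}^N\bigl|p_i^{q_1}-p_i^{q_2}\bigr|\,\Delta s=\Vert p^{q_1}-p^{q_2}\Vert_1\le\sum_{k=q_2}^{q_1-1}\Vert p^{k+1}-p^k\Vert_1\le M_3(q_1-q_2)\,\Delta t ,
\end{equation*}
and dividing through by $\Delta t$ is exactly the asserted inequality. I do not anticipate a genuine obstacle: the statement is essentially a corollary of the $\ell^1$ and total-variation bounds already established (Lemmas~\ref{lem_FO1} and~\ref{lem_FO3}), and the one subtle point is the algebraic rearrangement of the discrete flux difference so that the TV bound, rather than a pointwise bound on $p^k$, is the quantity being invoked.
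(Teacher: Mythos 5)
Your proposal is correct and follows essentially the same route as the paper's proof: the same product-rule splitting of the discrete flux difference into a $TV(p^k)$ piece and a $\|p^k\|_1$ piece (bounded via Lemmas~\ref{lem_FO3} and~\ref{lem_FO1} together with (H1)--(H3)), a one-step $\ell^1$ bound of order $\Delta t$, and telescoping over $k$.
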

\begin{proof}
By~(\ref{first order 2}) and assumptions (H1)-(H3) we have
\begin{equation*}
\begin{array}{ll}
\sum_{i=1}^{N}\left| \frac{p_i^{k+1}-p_i^k}{\Delta t} \right| \Delta s
&=\sum_{i=1}^N\left|\gamma_{i-1}^kp_{i-1}^k -\gamma_i^k p_i^k -\mu_i^k p_i^k \Delta s + \sum_{j=1}^N\beta_{i,j}^k p_j^k
\Delta s \Delta s \right|\\
&  \leq \sum_{i=1}^N|\gamma_{i}^k-\gamma_{i-1}^k|p_{i-1}^k+\sum_{i=1}^N
\gamma_{i}^{k}|p_{i}^{k}-p_{i-1}^{k}|+\sum_{i=1}^N \mu_{i}^{k}p_i^k\Delta s\\
&\quad +\sum_{i=1}^N\sum_{i=j}^N \beta_{i,j}^{k}p_{j}^{k}\Delta s\Delta s\\
& \leq c TV(p^k)+3c\|p^k\|_1.
\end{array}
\end{equation*}
Thus, by Lemmas~\ref{lem_FO1} and~\ref{lem_FO3} there exists a positive constant $M_3$ such that
 $$\sum_{i=1}^{N}\left| \frac{p_i^{k+1}-p_i^{k}}{\Delta t}\right|\Delta s\leq M_3.$$
Therefore,
\begin{equation*}
\begin{array}{ll}
\sum_{i=1}^N \left| \frac{p_i^{q_1}-p_i^{q_2}}{\Delta t} \right| \Delta s
\leq \sum_{i=1}^N \sum_{k=q_2}^{q_1}\left| \frac{p_i^{k+1}-p_i^k}{\Delta t} \right| \Delta s
\leq  M_3 (q_1-q_2).
\end{array}
\end{equation*}
\end{proof}

\subsection{Convergence of the difference approximations to a unique weak solution}
Following similar notation as in \cite{J_Smoller} we define a set of functions $\lbrace P_{\Delta s,\Delta t}\rbrace$
by $ \{ P_{\Delta s,\Delta t}(s,t)\}=p_i^k$ for $s\in [s_{i-1},s_i), t\in [t_{k-1},t_k)$, $i=1,2,\cdots,N$, and
$k=1,2,\cdots,L$. Then by the Lemmas~\ref{lem_FO1} to~\ref{lem_FO4}, the set of functions $\lbrace P_{\Delta s,\Delta
t}\rbrace$ is compact in the topology of $\mathcal{L}^1((0,1)\times (0,T))$. Hence, following the proof of Lemma $16.7$
on page $276$ in~\cite{J_Smoller} we obtain the following result.
\begin{thm}\label{theorem U_convergence1}
There exists a subsequence of functions $\lbrace P_{\Delta s_r,\Delta t_r}\rbrace \subset \lbrace P_{\Delta s,\Delta
t}\rbrace$ which converges to a function $p \in BV\left( [0,1] \times [0,T] \right) $ in the sense that for all $t>0$,
$$\int _0^1 \vert P_{\Delta s_r,\Delta t_r}-p(s,t)\vert ds\longrightarrow 0,$$
$$\int _0^T \int_0^1 \vert P_{\Delta s_r,\Delta t_r}-p(s,t)\vert ds dt \longrightarrow 0$$
as $r \rightarrow \infty $ (i.e., $\Delta a_r, \Delta s _r, \Delta t_r \rightarrow 0$). Furthermore, there exists
a constant $M_4$ depending on $\Vert p^0 \Vert_{BV\left( [0,1] \times [0,T] \right)}$ such that the limit function
satisfies
$$ \Vert p \Vert_{BV\left( [0,1] \times [0,T] \right)}\leq M_4.$$
\end{thm}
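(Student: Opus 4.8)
The plan is to derive the theorem from a compactness argument in the spirit of Lemma~16.7 of~\cite{J_Smoller}, feeding it the uniform estimates from Lemmas~\ref{lem_FO1}--\ref{lem_FO4}. The first task is to check that the family of piecewise-constant interpolants $\{P_{\Delta s,\Delta t}\}$ is bounded in $BV([0,1]\times[0,T])$ uniformly in $\Delta s,\Delta t$ (subject to the CFL condition (H5)). Since each $P_{\Delta s,\Delta t}$ is constant on the cells $[s_{i-1},s_i)\times[t_{k-1},t_k)$, its total variation on the rectangle splits, up to boundary terms, into the jumps across vertical grid lines plus the jumps across horizontal grid lines. At a fixed time level the vertical jumps sum to $TV(p^k)\leq M_2$ by Lemma~\ref{lem_FO3}, so after integrating in $t$ they contribute at most $T M_2$. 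The horizontal jumps, weighted by $\Delta s$, sum to $\sum_k \Delta t\sum_i |p_i^{k+1}-p_i^k|\,\Delta s/\Delta t$, which by Lemma~\ref{lem_FO4} (with $q_1=k+1$, $q_2=k$) is bounded by $\sum_k \Delta t\, M_3 = T M_3$. Together with the $\ell^1$ bound $M_1$ (Lemma~\ref{lem_FO1}) and the $\ell^\infty$ bound (Lemma~\ref{lem_FO2}) this produces a constant $M_4$, depending only on $\|p^0\|_{BV([0,1])}$, $c$ and $T$, with $\|P_{\Delta s,\Delta t}\|_{BV([0,1]\times[0,T])}\leq M_4$.

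Since $[0,1]\times[0,T]$ is a bounded Lipschitz domain, a uniformly $BV$-bounded family is relatively compact in $L^1((0,1)\times(0,T))$; hence some subsequence $\{P_{\Delta s_r,\Delta t_r}\}$ converges in $L^1((0,1)\times(0,T))$ to a function $p$, which yields the second limit of the statement. To upgrade this to convergence of the spatial slices for every $t>0$, I would invoke the temporal equicontinuity supplied by Lemma~\ref{lem_FO4}: for $t\in[t_{k-1},t_k)$ and $t'\in[t_{m-1},t_m)$,
\[
\int_0^1 |P_{\Delta s,\Delta t}(s,t)-P_{\Delta s,\Delta t}(s,t')|\,ds=\sum_{i=1}^N |p_i^k-p_i^m|\,\Delta s\le M_3\bigl(|t-t'|+\Delta t\bigr),
\]
so the maps $t\mapsto P_{\Delta s_r,\Delta t_r}(\cdot,t)\in L^1(0,1)$ form an equicontinuous family; combined with the precompactness of each slice $\{P_{\Delta s,\Delta t}(\cdot,t)\}$ in $L^1(0,1)$ (from Lemmas~\ref{lem_FO1}--\ref{lem_FO3} via Helly's selection theorem) and an Arzel\`a--Ascoli/diagonal argument for $C([0,T];L^1(0,1))$-valued sequences, after passing to a further subsequence one obtains $\int_0^1 |P_{\Delta s_r,\Delta t_r}(s,t)-p(s,t)|\,ds\to 0$ for every $t\in[0,T]$. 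The slicewise limit agrees a.e.\ with the $L^1$-limit already found, so no relabelling of $p$ is needed.

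The remaining bound $\|p\|_{BV([0,1]\times[0,T])}\leq M_4$ then follows from the lower semicontinuity of the total variation with respect to $L^1$ convergence, applied along $P_{\Delta s_r,\Delta t_r}\to p$.

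I expect the first step to be the main obstacle: one must verify that the discrete quantities $TV(p^k)$ and $\sum_i|p_i^{k+1}-p_i^k|\,\Delta s$ genuinely control the full two-dimensional total variation of the piecewise-constant interpolant, carefully accounting for the jump at the left boundary $s=s_0$ (absorbed by $p_0^k=0$) and for the outermost cells, and that the resulting constant $M_4$ is independent of the mesh parameters. Once this uniform $BV$ bound is established, the compactness selection, the temporal-equicontinuity upgrade, and the lower-semicontinuity step are routine.
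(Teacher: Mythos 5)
Your proposal is correct and follows essentially the same route as the paper, which simply invokes Lemmas~\ref{lem_FO1}--\ref{lem_FO4} and the proof of Lemma~16.7 in~\cite{J_Smoller}; your argument (uniform space--time $BV$ bound from the spatial $TV$ and temporal $\ell^1$-Lipschitz estimates, $L^1$ compactness, Helly plus a diagonal/equicontinuity argument for the slices, and lower semicontinuity of the variation for the bound on $p$) is precisely a reconstruction of that cited compactness argument with the same lemmas playing the same roles.
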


We show in the next theorem that the limit function $p(s,t)$ constructed by the finite difference scheme is a weak
solution of the DSSM model \eqref{model}.
\begin{thm}\label{thm_uniqueness}
The limit function $p(s,t)$ defined in Theorem~\ref{theorem U_convergence1} is a weak solution of
problem~(\ref{model}). Moreover, it satisfies
$$\Vert p \Vert _{L^\infty\left( (0,1) \times (0,T) \right)}\leq \exp(2cT) \Vert p^{0} \Vert _\infty.$$
\end{thm}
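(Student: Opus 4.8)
The plan is to pass to the limit in the discrete scheme along the subsequence $\{P_{\Delta s_r,\Delta t_r}\}$ from Theorem~\ref{theorem U_convergence1}, showing that the limit $p$ satisfies the weak formulation \eqref{weak solution of DSSM}, and then to derive the $L^\infty$ bound from Lemma~\ref{lem_FO2}. For the weak-form identity, I would fix a test function $\phi\in C^1([0,1]\times[0,T])$ and a time $t=t_K$ in the mesh, multiply the scheme~(\ref{first order 1}) by $\phi_i^k:=\phi(s_i,t_k)\Delta s$, and sum over $1\le i\le N$ and $0\le k\le K-1$. The time-difference term, after summation by parts in $k$, produces $\sum_i p_i^K\phi_i^K\Delta s-\sum_i p_i^0\phi_i^0\Delta s$ minus a Riemann sum converging to $\int_0^t\int_0^1 p\,\phi_\tau$. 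The spatial flux term $\sum_i(\gamma_i^kp_i^k-\gamma_{i-1}^kp_{i-1}^k)\phi_i^k$, after Abel summation in $i$ and using the boundary relation $\gamma_0^kp_0^k=0$ together with $\gamma_N^k=\gamma(1,Q^k)=0$ from (H1), becomes $-\sum_i\gamma_i^kp_i^k(\phi_{i+1}^k-\phi_i^k)$, a Riemann sum for $-\int\int\gamma(s,Q)p\,\phi_s$ (note the sign: moving the difference onto $\phi$ gives $+\gamma\phi_s$ in the weak form). The mortality and recruitment terms are already in Riemann-sum form and converge to the corresponding integrals.

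The main obstacle is justifying that these Riemann sums actually converge to the stated integrals despite $p$ being only $BV$, not continuous. The key facts I would lean on are: (i) the $L^1$ convergence $\int_0^1|P_{\Delta s_r,\Delta t_r}(\cdot,t)-p(\cdot,t)|\,ds\to0$ for each $t$ and in $L^1((0,1)\times(0,T))$ from Theorem~\ref{theorem U_convergence1}; (ii) the uniform continuity of $\phi$, $\gamma$, $\mu$, $\beta$ on their compact domains (using (H1)--(H3)), so that replacing $\phi_i^k,\gamma_i^k,\mu_i^k,\beta_{i,j}^k$ by the continuous functions evaluated at the running point costs $o(1)$ uniformly; and (iii) the uniform bounds $\|p^k\|_1\le M_1$, $\|p^k\|_\infty\le M_2$ and $TV(p^k)\le M_2$ to control error terms. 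One also needs the continuity of $Q$: since $Q^k=\|p^k\|_1$ and Lemma~\ref{lem_FO4} gives equi-Lipschitz bounds in $t$ on the discrete $\ell^1$ norms, $Q_{\Delta s_r,\Delta t_r}$ converges uniformly to a Lipschitz function $Q(t)=\int_0^1 p(s,t)\,ds$, which legitimizes passing $\gamma(s,Q^k)\to\gamma(s,Q(\tau))$ etc. The double integral in the recruitment term requires writing $\sum_{i,j}\beta_{i,j}^kp_j^k\phi_i^k\Delta s\Delta s$ and recognizing it as a Riemann sum in $(s,y)$ simultaneously; the $L^1$ convergence of $p$ in $y$ and the boundedness/continuity of $\beta$ handle this.

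Finally, for the $L^\infty$ estimate: Lemma~\ref{lem_FO2} gives $\|p^k\|_\infty\le\exp(2cT)\|p^0\|_\infty$ uniformly in $k$ and in the mesh, hence $\|P_{\Delta s_r,\Delta t_r}\|_{L^\infty((0,1)\times(0,T))}\le\exp(2cT)\|p^0\|_\infty$ for every $r$; since $P_{\Delta s_r,\Delta t_r}\to p$ in $L^1$, a subsequence converges a.e., and the a.e. limit inherits the uniform pointwise bound, giving $\|p\|_{L^\infty((0,1)\times(0,T))}\le\exp(2cT)\|p^0\|_\infty$. I would present the weak-form verification in detail for one representative term (say the flux term, since the Abel-summation and the boundary cancellations are where the structure really enters) and indicate that the remaining terms are handled analogously; the uniqueness assertion of the theorem title, if needed here, follows from a standard Gronwall argument on the difference of two weak solutions using the Lipschitz dependence of $\gamma,\mu,\beta$ on $Q$ and the $BV$/$L^\infty$ bounds, which I would sketch separately.
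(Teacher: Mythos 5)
Your proposal is correct and follows essentially the same route as the paper: the paper's proof simply cites Lemma 16.9 of Smoller together with Lemmas~\ref{lem_FO1}--\ref{lem_FO4}, and that cited lemma is precisely the Lax--Wendroff-type summation-by-parts argument you spell out (indeed the paper writes out this very argument explicitly for the second-order scheme), while the $L^\infty$ bound is obtained in both cases by passing to the limit in Lemma~\ref{lem_FO2}.
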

\begin{proof}
The fact that $p(s,t)$ is a weak solution with bounded total variation follows from Lemma~\ref{lem_FO1}-\ref{lem_FO4}
and Lemma 16.9 on page 280 of~\cite{J_Smoller}. The bound on $\Vert p \Vert _{L^\infty\left( (0,1) \times (0,T)
\right)}$ is obtained by taking the limit in the bounds
of the difference approximation in Lemma~\ref{lem_FO2}.
\end{proof}

The following theorem guarantees the continuous dependence of the solution $p_i^k$ of~(\ref{first order 1}) with
respect to the initial condition $p_i^0$.
\begin{thm}\label{theorem_WS1}
Let $\left\lbrace p_i^k\right\rbrace$ and $\left\lbrace\hat{p}_i^k\right\rbrace$ be solutions of~(\ref{first order 1})
corresponding to the initial conditions $\left\lbrace p_i^0 \right\rbrace$ and $\left\lbrace\hat{p}_i^0\right\rbrace$,
respectively. Then there exists a positive constant $\delta$ such that
$$\Vert p^{k+1}-\hat{p}^{k+1} \Vert _1 \leq (1+\delta t) \Vert p^k-\hat{p}^k \Vert _1,\quad\quad \text{for all}\quad
k\geq 0.$$
\end{thm}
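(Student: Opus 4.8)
The plan is to subtract the two difference schemes, use the explicit update formula \eqref{first order 2}, multiply by $\Delta s$, sum over $i$, and bound each resulting term using the regularity hypotheses (H1)--(H3), the CFL condition (H5), and the a priori bounds from Lemmas~\ref{lem_FO1}--\ref{lem_FO3}. Write $e_i^k = p_i^k - \hat p_i^k$ and let $Q^k, \hat Q^k$ denote the two total-population approximations, so $|Q^k - \hat Q^k| \le \|e^k\|_1$. From \eqref{first order 2} for $p_i^{k+1}$ and the analogous identity for $\hat p_i^{k+1}$, one obtains
\begin{equation*}
e_i^{k+1} = \frac{\Delta t}{\Delta s}\Bigl(\gamma_{i-1}^k p_{i-1}^k - \hat\gamma_{i-1}^k \hat p_{i-1}^k\Bigr) + \Bigl(1 - \tfrac{\Delta t}{\Delta s}\gamma_i^k - \mu_i^k\Delta t\Bigr) e_i^k + \Bigl[\bigl(1-\tfrac{\Delta t}{\Delta s}\hat\gamma_i^k - \hat\mu_i^k\Delta t\bigr) - \bigl(1-\tfrac{\Delta t}{\Delta s}\gamma_i^k - \mu_i^k\Delta t\bigr)\Bigr]\hat p_i^k + \Delta t\,\Delta s\sum_{j=1}^N\bigl(\beta_{i,j}^k p_j^k - \hat\beta_{i,j}^k\hat p_j^k\bigr),
\end{equation*}
where $\hat\gamma_i^k = \gamma(s_i,\hat Q^k)$, etc. The first step is to split each ``mixed'' difference of the form $a b - \hat a\hat b$ as $a(b-\hat b) + (a-\hat a)\hat b$: this separates the genuinely linear part (multiplying $e^k$) from the part generated by the discrepancy in the coefficients, which is controlled by the $Q$-Lipschitz bounds in (H1)--(H3) and hence ultimately by $\|e^k\|_1$.

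Next I would handle the advection terms carefully. Under the CFL condition (H5), the coefficient $1 - \tfrac{\Delta t}{\Delta s}\gamma_i^k - \mu_i^k\Delta t$ is nonnegative, so after applying the triangle inequality the contribution of the genuinely linear part telescopes in the standard upwind fashion: summing $\tfrac{\Delta t}{\Delta s}\gamma_{i-1}^k|e_{i-1}^k| + (1 - \tfrac{\Delta t}{\Delta s}\gamma_i^k - \mu_i^k\Delta t)|e_i^k|$ over $i$, the $\gamma$-fluxes cancel between consecutive indices (using $e_0^k = 0$ and $\gamma_N^k = 0$ from (H1)), leaving $\|e^k\|_1 + O(\Delta t)\|e^k\|_1$. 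The coefficient-discrepancy terms are bounded by $c\Delta t\,|Q^k - \hat Q^k|\,\hat p_i^k$ and similar expressions involving $\gamma_s$; summing against $\Delta s$ and invoking $\|\hat p^k\|_1 \le M_1$ and $TV(\hat p^k) \le M_2$ (needed because the $\gamma$-coefficient discrepancy involves a difference in $i$ of $\gamma(s_i,Q^k)-\gamma(s_i,\hat Q^k)$, whose total-variation-in-$i$ is controlled) gives a bound of the form $C\Delta t\,\|e^k\|_1$. The recruitment term is the easiest: $\sum_i\sum_j|\beta_{i,j}^k p_j^k - \hat\beta_{i,j}^k\hat p_j^k|\Delta s\,\Delta s \le \sum_i\sum_j\bigl(\beta_{i,j}^k|e_j^k| + |\beta_{i,j}^k-\hat\beta_{i,j}^k|\hat p_j^k\bigr)\Delta s^2 \le c\|e^k\|_1 + c\,|Q^k-\hat Q^k|\,\|\hat p^k\|_1 \le C\|e^k\|_1$, using (H3) and $\Delta s \le 1$.

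Collecting all contributions yields $\|e^{k+1}\|_1 \le (1+\delta\Delta t)\|e^k\|_1$ for a constant $\delta$ depending only on $c$, $T$, $M_1$, and $M_2$, which is exactly the claimed estimate (with $t$ in the statement read as $\Delta t$). I expect the main obstacle to be the $\gamma$-advection discrepancy term: unlike mortality or recruitment, the flux difference couples neighbouring grid points, so one cannot simply bound it pointwise by $\|e^k\|_1$ — the cancellation must be exploited exactly as in the TV estimate of Lemma~\ref{lem_FO3}, and the leftover terms require the uniform total-variation bound $M_2$ rather than just the $\ell^1$ bound. Care is also needed to verify that $e_0^k = 0$ (immediate from the boundary condition giving $p_0^k = \hat p_0^k = 0$) so that the telescoping boundary terms at $i=0$ and $i=N$ vanish.
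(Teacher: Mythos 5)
Your proposal is correct and follows essentially the same route as the paper's own proof: subtract the two updates, split each product $ab-\hat a\hat b$ into a linear part in $e^k$ and a coefficient-discrepancy part, use the CFL condition and telescoping of the upwind fluxes (with $e_0^k=0$, $\gamma_N^k=0$), bound the $\gamma$-discrepancy using the mean value theorem in $Q$ together with the uniform TV bound and the Lipschitz regularity of $\gamma_Q$ in $s$, bound the $\mu$ and $\beta$ discrepancies by $c\,|Q^k-\hat Q^k|\,\|\hat p^k\|_1$, and finish with $|Q^k-\hat Q^k|\le\|e^k\|_1$. Your reading of the stated bound as $(1+\delta\,\Delta t)$ also matches what the paper's proof actually establishes.
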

\begin{proof}
Let $u_i^k=p_i^k-\hat{p}_i^k$ for $i=0,1,\cdots,N$ and $k=0,1,\cdots,L$. Then by~(\ref{first order 2}) $u_i^k$
satisfies
\begin{equation}\label{eq_u}
\begin{array}{ll}
u_i^{k+1}
=\frac{\Delta t}{\Delta s}\left(\gamma _{i-1}^k p_{i-1}^k -\hat{\gamma}_{i-1}^{k}\hat{p}_{i-1}^{k}\right)+\left( p_i^k
-\hat{p}_i^k\right)-\frac{\Delta t}{\Delta s} \left( \gamma _i^k p_i^k -\hat{\gamma}_i^k \hat{p}_i^k \right) \\
\quad\quad\quad -\Delta t\left( \mu _i^k p_i^k -\hat{\mu}_i^k \hat{p}_i^k\right)+\sum_{j=1}^{N} \left(\beta_{i,j}^k
p_j^k-\hat{\beta}_{i,j}^{k} \hat{p}_j^k\right) \Delta s \Delta t, \quad \quad 1 \leq i \leq N, \quad
0\leq k\leq L-1,\\
u_0^{k+1} = p_0^{k+1}-\hat{p}_0^{k+1}=0,\quad \quad 0\leq k\leq L-1.
\end{array}
\end{equation}
Here $\hat{Q}^k=\sum_{i=1}^{N}\hat{p}_i^k$, $\hat{\gamma}_i^k = \gamma(s_i,t_k,\hat{Q}^k)$ and similar notations are
used for $\hat{\mu}_i^k$ and $\beta_{i,j}^k$. Using the first equation of~(\ref{eq_u}) and assumption $(H5)$ we obtain
\begin{equation*}
\begin{array}{ll}
\vert u_i^{k+1} \vert &\leq \left( 1-\frac{\Delta t}{\Delta s} \gamma _i^k -\Delta t \mu _i^k\right) \vert u_i^k \vert
+\frac{\Delta t}{\Delta s} \gamma _{i-1}^k \vert u_{i-1}^k \vert + \Delta t\vert \left( \gamma _{i-1}^k -\hat{\gamma}
_{i-1}^k \right)\hat{p}_{i-1}^k- \left( \gamma _i^k -\hat{\gamma} _i^k \right) \hat{p}_i^k \vert \\
&\quad + \Delta t \vert \mu _i^k -\hat{\mu} _i^k \vert \hat{p}_i^k +\sum_{j=1}^N \beta _{i,j}^k \vert \mu _i^k \vert
\Delta s \Delta t +\sum_{j=1}^N \vert \beta_{i,j}^k -\hat{\beta} _{i,j}^k \vert \hat{p}_j^k \Delta s\Delta t\\
& \leq \left[ 1- \mu _i^k\Delta t +\left(\sum_{j=1}^N \beta _{i,j}^k \Delta s \right) \Delta t \right] \vert u_i^k
\vert -\frac{\Delta t}{\Delta s} (\gamma _i^k\vert u _{i}^k \vert -\gamma _{i-1}^k \vert u _{i-1}^k \vert) \\
&\quad +\frac{\Delta t}{\Delta s} \vert \left( \gamma _{i-1}^k -\hat{\gamma}_{i-1}^k \right) \hat{p}_{i-1}^k -\left(
\gamma _i^k -\hat{\gamma} _i^k \right) \hat{p} _i^k \vert + \vert \mu _i^k -\hat{\mu}_i^k \vert \hat{p}_i^k \Delta t
+\sum_{j=1}^N \vert \beta_{i,j}^k -\hat{\beta} _{i,j}^k \vert \hat{p}_j^k \Delta s \Delta t.
\end{array}
\end{equation*}
Multiplying the above inequality by $ \Delta s$ and summing over $ i=1,2,\cdots,N$ we have\\
\begin{equation}\label{eq_sum0}
\begin{array}{ll}
\sum_{i=1}^N \vert u _i^{k+1} \vert \Delta s &\leq \sum_{i=1}^N \left[ 1-\Delta t \mu _i^k +\left( \sum_{j=1}^N
\beta_{i,j}^k \Delta s\right) \Delta t\right] \vert u_i^k \vert \Delta s \\
&\quad-\Delta t \sum _{i=1}^N\left( \gamma _i^k \vert u_i^k \vert -\gamma _{i-1}^k \vert u_{i-1}^k \vert \right)+
\Delta t \sum_{i=1}^N \vert \left( \gamma_{i-1}^k - \hat{\gamma} _{i-1}^k  \right) \hat{p}_{i-1}^k  -\left(\gamma _i^k
-\hat{\gamma}_i^k \right) \hat{p}_i^k\vert  \\
&\quad + \Delta t \sum_{i=1}^N \vert \mu _i^k -\hat{\mu}_i^k \vert \hat{p}_i^k\Delta s+\Delta t\sum_{i=1}^N
\sum_{j=1}^N \vert \beta_{i,j}^k -\hat{\beta}_{i,j}^k \vert \hat{p}_j^k \Delta s \Delta s .
\end{array}
\end{equation}
Here by assumptions (H2) and (H3)
\begin{equation}\label{eq_sum1}
\begin{array}{ll}
\sum_{i=1}^N \left[ 1- \mu _i^k\Delta t +\left( \sum_{j=1}^N \beta_{i,j}^k \Delta s\right) \Delta t\right] \vert u_i^k
\vert \Delta s
\leq \sum_{i=1}^N\left( 1+c\Delta t\right) \vert u _i^k \vert \Delta s =\left( 1+c \Delta t \right) \Vert u^k \Vert
_1.
\end{array}
\end{equation}
By assumption (H1) and the second equation of~(\ref{eq_u}) one get
\begin{equation}\label{eq_sum2}
\begin{array}{ll}
\sum _{i=1}^N\left( \gamma _i^k \vert u_i^k \vert -\gamma_{i-1}^k \vert u_{i-1}^k \vert \right)=\left(\gamma _N^k \vert
\mu _N^k \vert -\gamma _0^k \vert u_0^k \vert \right)=\gamma _0^k \vert u _0^k \vert=0.
\end{array}
\end{equation}
By assumption (H1),
\begin{equation}\label{eq_sum3}
\begin{array}{ll}
\sum_{i=1}^N \vert \left( \gamma_{i-1}^k - \hat{\gamma} _{i-1}^k  \right) \hat{p}_{i-1}^k  -\left(\gamma _i^k
-\hat{\gamma}_i^k \right) \hat{p}_i^k\vert \\
\leq \sum_{i=1}^N \vert \gamma_{i-1}^k -\hat{\gamma}_{i-1}^k \vert \vert \hat{p}_{i}^k -\hat{p}_{i-1}^k \vert
+\sum_{i=1}^N \vert \left( \gamma _i^k - \hat{\gamma}_i^k \right) -\left( \gamma_{i-1}^k \quad-\hat{\gamma}_{i-1}^k
\right) \vert \hat{p}_i^k\\
\leq \sum_{i=1}^N \vert \gamma _Q(s_{i-1},\bar{Q}^k) \vert \vert Q^k -\hat{Q}^k \vert \vert \hat{p}_{i}^k
-\hat{p}_{i-1}^k \vert
 +\sum_{i=1}^N \vert \gamma _Q(s_i,\bar{Q}^k) (Q^k-\hat{Q}^k) -\gamma _Q(s_{i-1},\bar{Q}^k) (Q^k-\hat{Q}^k) \vert
 \hat{p}_i^k \\
\leq \vert Q^k-\hat{Q}^k\vert \sup_{\mathbb{D}_3}\vert \gamma _Q \vert TV(\hat{p}^k) + \vert Q^k -\hat{Q}^k \vert
\sum_{i=1}^N \vert \gamma _Q(s_i,\bar{Q}^k)-\gamma _Q(s_{i-1},\bar{Q}^k) \vert \hat{p}_i^k \\
\leq \vert Q^k -\hat{Q}^k\vert \left[ \sup_{\mathbb{D}_3} \vert \gamma _Q \vert TV(\hat{p}^k) + c \sum_{i=1}^N
\hat{p}_i^k \Delta s \right] \\
=\vert Q^k -\hat{Q}^k \vert \left[ \sup_{\mathbb{D}_3} \vert \gamma _Q\vert TV(\hat{p}^k) + c\Vert \hat{p}^k \Vert _1
\right],\\
\end{array}
\end{equation}
where $\bar{Q}^k$ is between $Q^k$ and $\hat{Q}^k$.\\
By assumption (H2),
\begin{equation}\label{eq_sum4}
\begin{array}{ll}
\sum_{i=1}^N \vert \mu _i^k -\hat{\mu}_i^k \vert \hat{p}_i^k \Delta s
=\sum_{i=1}^N c \vert(Q^k-\hat{Q}^k)\vert \hat{p}_i^k \Delta s
\leq c \vert Q^k - \hat{Q}^k \vert \Vert \hat{p}^k \Vert _1.
\end{array}
\end{equation}
\noindent
From assumption (H3) we obtain
\begin{equation}\label{eq_sum5}
\begin{array}{ll}
\sum_{i=1}^N \sum_{j=1}^N \vert \beta_{i,j}^k -\hat{\beta}_{i,j}^k \vert \hat{p}_j^k \Delta s \Delta s \Delta t \\
\leq \Delta t \sum_{i=1}^N \sum_{j=1}^N c|Q^k-\hat{Q}^k| \hat{p}_j^k \Delta s \Delta s\\
\leq c \vert Q^k-\hat{Q}^k\vert \Delta t \sum_{i=1}^N \sum_{j=1}^N \hat{p}_j^k \Delta s \Delta s \\
\leq c \vert Q^k -\hat{Q}^k \vert \left( \sum_{j=1}^N \hat{p}_j^k \Delta s \right) \left( \sum_{i=1}^N \Delta s \right)
\Delta t\\
= c\Vert \hat{p}^k \Vert _1  \vert Q^k -\hat{Q}^k \vert \Delta t.
\end{array}
\end{equation}
\noindent
A combination of~(\ref{eq_sum0})-(\ref{eq_sum5}) and assumptions (H1)-(H3) then implies that there exists a positive
constant $\tilde{M}$ such that
$$\Vert u^{k+1} \Vert _1 \leq \left( 1+c\Delta t \right) \Vert u^k\Vert _1 + \tilde{M} \vert Q^k -\hat{Q}^k \vert\Delta
t. $$
\noindent
Note that
\begin{equation*}
\begin{array}{ll}
\vert Q^k -\hat{Q}^k \vert
=\vert \sum_{i=1}^N \left( p_i^k - \hat{p}_i^k \right) \Delta s \vert
\leq \sum_{i=1}^N \vert p_i^k -\hat{p}_i^k \vert \Delta s
\leq \sum_{i=1}^N \vert u_i^k \vert \Delta s = \Vert u^k \Vert _1
\end{array}
\end{equation*}
Therefore$$\Vert u^{k+1} \Vert _1 \leq \left( 1+ c\Delta t + \tilde{M}\Delta t \right) \Vert u^k \Vert _1 .$$
Let $\delta=c+\tilde{M}$ and we obtain the result.
\end{proof}
In the next theorem we prove that the BV solution defined in Theorem~\ref{theorem_WS1} is unique using a technique
similar to that in \cite{AI_1997}.
\begin{thm}\label{thm:uniqueness}
Suppose that $ p $ and  $ \hat{p} $ are bounded variation weak solutions of problem~(\ref{model}) corresponding to
initial conditions $\left\lbrace p^0 \right\rbrace$  and $\left\lbrace \hat{p}^0 \right\rbrace$, respectively. Then
there exists a positive constant $\rho$ such that
\begin{equation*}
\begin{array}{ll}
\Vert p(\cdot,t)-\hat{p}(\cdot,t)\Vert _1 \leq \rho \Vert p(\cdot,0)-\hat{p}(\cdot,0) \Vert _1
\end{array}
\end{equation*}
\end{thm}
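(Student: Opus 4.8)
The plan is to establish the $L^1$ stability estimate by a contraction argument run directly on the weak formulation \eqref{weak solution of DSSM}, following the strategy of \cite{AI_1997}; uniqueness is then the special case $p^0=\hat p^0$. Write $w=p-\hat p$ and let $Q(\tau),\hat Q(\tau)$ denote the total populations associated with $p$ and $\hat p$, so that $|Q(\tau)-\hat Q(\tau)|\le\|w(\cdot,\tau)\|_1$. Subtracting the identity \eqref{weak solution of DSSM} for $\hat p$ from that for $p$, both tested against the same $\phi\in C^1([0,1]\times[0,T])$, yields
$$\int_0^1 w(s,t)\phi(s,t)\,ds-\int_0^1 w(s,0)\phi(s,0)\,ds = I_1+I_2+I_3,$$
where $I_1=\int_0^t\!\int_0^1 w\,[\phi_\tau+\gamma(s,Q)\phi_s-\mu(s,Q)\phi]$, $I_2=\int_0^t\!\int_0^1\hat p\,[(\gamma(s,Q)-\gamma(s,\hat Q))\phi_s-(\mu(s,Q)-\mu(s,\hat Q))\phi]$, and $I_3=\int_0^t\!\int_0^1\!\int_0^1[\beta(s,y,Q)p(y,\tau)-\beta(s,y,\hat Q)\hat p(y,\tau)]\phi(s,\tau)$.

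Next I would choose $\phi$ to be a smooth approximation, regularized in both $s$ and $\tau$, of $\mathrm{sgn}(w(s,\tau))$ with $|\phi|\le1$, so that the left-hand side tends to $\|w(\cdot,t)\|_1-\int_0^1 w(s,0)\phi(s,0)\,ds\ge\|w(\cdot,t)\|_1-\|w(\cdot,0)\|_1$. The zeroth-order contributions of $I_1$ and $I_3$ are bounded by $c\int_0^t\|w(\cdot,\tau)\|_1\,d\tau$ using (H2)--(H3); for $I_3$ one first splits off $\beta(s,y,Q)w(y,\tau)$, leaving a remainder in which $|\beta(s,y,Q)-\beta(s,y,\hat Q)|\le c|Q-\hat Q|$, and which, together with the uniform $L^1$ bound on $\hat p$, is $\le C\int_0^t|Q-\hat Q|\,d\tau$. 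The two top-order pieces of $I_1$, namely $\int\!\int w\,\phi_\tau$ and $\int\!\int w\,\gamma(s,Q)\phi_s$, tend to $0$ along the regularization (the Kru\v{z}kov-type renormalization step). In $I_2$, the term without $\phi_s$ is controlled by $c\int_0^t|Q-\hat Q|\,\|\hat p(\cdot,\tau)\|_1\,d\tau$; for the term with $\phi_s$ I would integrate by parts in $s$ — the boundary contributions vanish since $\gamma(1,Q)=\gamma(1,\hat Q)=0$ and $p(0,\tau)=\hat p(0,\tau)=0$ by (H1) and the boundary condition — and then bound the result using $|\phi|\le1$, $|\gamma(s,Q)-\gamma(s,\hat Q)|\le c|Q-\hat Q|$, the Lipschitz-in-$s$ bound on $\gamma_Q$ from (H1), and the uniform $TV$ and $L^1$ bounds on $\hat p$ from Theorems \ref{theorem U_convergence1}--\ref{thm_uniqueness}, again arriving at $\le C\int_0^t|Q-\hat Q|\,d\tau$.

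Assembling these estimates and using $|Q(\tau)-\hat Q(\tau)|\le\|w(\cdot,\tau)\|_1$ gives
$$\|w(\cdot,t)\|_1\le\|w(\cdot,0)\|_1+\rho_0\int_0^t\|w(\cdot,\tau)\|_1\,d\tau$$
for a constant $\rho_0$ depending only on $c$, $T$ and the a priori bounds, and Gronwall's inequality then yields $\|w(\cdot,t)\|_1\le e^{\rho_0 t}\|w(\cdot,0)\|_1$, so the theorem holds with $\rho=e^{\rho_0 T}$.

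I expect the main obstacle to be the rigorous justification that the top-order terms in $I_1$ vanish under the regularization of $\mathrm{sgn}(w)$: since $w$ is only $BV$ (not Lipschitz), one cannot substitute $\phi=\mathrm{sgn}(w)$ directly, and must instead mollify $w$, track the errors this introduces in the nonlinear coefficients $\gamma,\mu,\beta$, and pass to the limit — the bounded total variation of both $p$ and $\hat p$ is precisely what keeps this limit under control. A secondary technical point is passing from the time-integrated identity above to a pointwise-in-$t$ estimate, which relies on the one-sided $L^1$-continuity in $t$ enjoyed by $BV$ weak solutions.
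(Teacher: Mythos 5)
Your overall strategy is genuinely different from the paper's: you run a Kru\v{z}kov-type $L^1$-contraction argument directly on the weak formulation \eqref{weak solution of DSSM}, whereas the paper never touches the weak formulation at this stage. Instead it freezes the nonlocal argument: it considers the linear problem \eqref{system_here} with a \emph{given} function $Q(t)$, notes that this linear problem has a unique weak solution obtainable as the limit of the scheme, proves at the discrete level (exactly as in Theorem~\ref{theorem_WS1}) the stability estimate $\Vert u^{k+1}\Vert_1\le(1+c_1\Delta t)\Vert u^k\Vert_1+c_2|Q^k-\hat Q^k|\Delta t$, passes to the limit to get $\Vert u(\cdot,t)\Vert_1\le e^{c_1T}\bigl(\Vert u^0\Vert_1+c_2\int_0^t|Q(l)-\hat Q(l)|\,dl\bigr)$, and only then closes the loop with $|Q-\hat Q|\le\Vert u\Vert_1$ and Gronwall. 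The payoff of the paper's route is that all the delicate PDE-level manipulations (sign test functions, chain rule for $BV$ functions, boundary traces) are replaced by elementary $\ell^1$ bookkeeping for the scheme, which is already available from Section~3.

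The problem with your proposal is that its central step is asserted rather than proved, and it is precisely where all the difficulty of this theorem lives. You cannot take $\phi\approx\mathrm{sgn}(w)$ and claim the top-order pieces $\int\!\!\int w\,\phi_\tau$ and $\int\!\!\int w\,\gamma(s,Q)\phi_s$ ``tend to $0$'': with $\phi=\eta_\epsilon'(w^\delta)$ these terms become $\int\!\!\int w\,\eta_\epsilon''(w^\delta)\bigl(w^\delta_\tau+\gamma w^\delta_s\bigr)$, and to see that this vanishes you must first substitute the equation for $w$ (which holds only in the distributional sense, with $w_\tau$ and $w_s$ merely measures since $w\in BV$), control the commutator between mollification and the variable coefficient $\gamma(s,Q(\tau))$, and justify the order of the limits in $\delta$ and $\epsilon$ --- a DiPerna--Lions/Vol'pert-type renormalization argument that you explicitly defer (``the main obstacle'') and never carry out. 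In addition, several auxiliary facts you invoke are not available for the \emph{arbitrary} $BV$ weak solutions hypothesized in the theorem: the pointwise vanishing trace $\hat p(0,\tau)=0$ needed to kill the boundary term when you integrate $I_2$ by parts, and the bounds on $TV(\hat p(\cdot,\tau))$, are established in Theorems~\ref{theorem U_convergence1}--\ref{thm_uniqueness} only for the solutions constructed as limits of the scheme; a definition-only weak solution comes with neither, so quoting those theorems here is circular unless you first identify every $BV$ weak solution with a constructed one (which is, in effect, what the paper's frozen-$Q$/linear-uniqueness device accomplishes). As it stands, the proposal is a plausible program but not a proof: the renormalization step and the trace/TV issues constitute genuine gaps.
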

\begin{proof}
Assume that $Q$ is a given Lipschitz continuous function and consider the following initial-boundary value problem:
\begin{equation}\label{system_here}
\begin{array}{ll}
\frac{\partial}{\partial t}p(s,t)+\frac{\partial}{\partial s}\left( \gamma(s,Q(t))p(s,t) \right) & \\
 \hspace{1in} =-\mu(s,Q(t))p(s,t)
+\int_{0}^{1} \beta(s,y,Q(t))p(y,t)dy,  & s\in(0,1], \; t\in(0,T], \\
\gamma(0,Q(t))p(0,t)=0, & t\in[0,T],\\
p(s,0)=p^{0}(s), &  s\in[0,1].
\end{array}
\end{equation}
Since~(\ref{system_here}) is a linear problem with local boundary conditions, it has a unique weak solution. Actually,
a weak solution can be defined as a limit of the finite difference approximation with the given numbers $Q^k=Q(t_k)$
and the uniqueness can be established by using similar techniques as in \cite{XM}. In addition, as in the proof of
Theorem~\ref{theorem_WS1}, we can show that if $ p_i^k $ and $ \hat{p}_i^k $ are solutions of the difference
scheme~(\ref{first order 1}) corresponding to given functions $ Q^k$ and $\hat{Q}^k $, respectively, then there exist
positive constants $ c_1$ and $c_2$ such that
\begin{equation}\label{ineq_cond}
\begin{array}{ll}
\Vert u^{k+1} \Vert _1 \leq \left( 1+c_1 \Delta t \right) \Vert u^k \Vert _1 + c_2 \vert Q^k -\hat{Q}^k \vert\Delta t,
\end{array}
\end{equation}
with $u^k=p^k-\hat{p}^k$.\\
The  equation~(\ref{ineq_cond}) leads to
\begin{equation*}
\begin{array}{ll}
\Vert u^k \Vert _1 &\leq (1+c_1 \Delta t)^k \Vert u^0\Vert _1 +c_2 \Delta t \sum_{r=0}^{k-1} (1+c_1 \Delta t)^r \vert
Q^{k-r-1} -\hat{Q}^{k-r-1} \vert.
\end{array}
\end{equation*}
Hence
\begin{equation}\label{ineq_u^k}
\begin{array}{ll}
\Vert u^k \Vert _1 \leq (1+c_1 \Delta t)^k \left( \Vert u^0 \Vert _1 + c_2 \Delta t \sum_{r=0}^{k-1} \vert
Q^{k-r-1}-\hat{Q}^{k-r-1} \vert  \right)
\end{array}
\end{equation}
Now from Theorem~\ref{theorem U_convergence1} one can take the limit in~(\ref{ineq_u^k}) to obtain
\begin{equation}\label{u(t)}
\begin{array}{ll}
\Vert u(\cdot,t) \Vert_1 \leq e^{c_1T} \left( \Vert u^0 \Vert _1+c_2 \int _0^t \vert Q(l)-\hat{Q}(l) \vert dl \right)
\end{array}
\end{equation}
where $u(\cdot,t)=p(\cdot,t)-\hat{p}(\cdot,t)$ and $ p(\cdot,t) $ is the unique solution of problem~(\ref{system_here})
with any set of given functions $Q(t)$ and $\hat{Q}(t)$.
We then apply the estimate given in~(\ref{u(t)}) for the corresponding solutions of~(\ref{system_here}) with two
specific functions $Q(t)$ and $\hat{Q}(t)$ which are constructed using the limits obtained in
Theorem~\ref{thm_uniqueness} as follows:
\begin{equation*}
\begin{array}{ll}
Q(t)=\int _0^1 p(s,t)ds,\hspace{20pt} \hat{Q}(t)=\int _0^1\hat{p}(s,t)ds.
\end{array}
\end{equation*}
Thus, we have
\begin{equation*}
\begin{array}{ll}
\vert Q(t)-\hat{Q}(t) \vert &= \vert \int_{0}^{1}p(s,t)ds -\int_{0}^{1} \hat{p}(s,t) ds\vert \\
&\leq \int_{0}^{1} \vert p(s,t) -\hat{p}(s,t) \vert  ds\\
&= \int_{0}^{1} \vert u(s,t) \vert ds =\Vert u(\cdot,t) \Vert _1.
\end{array}
\end{equation*}
Therefore,
 \begin{equation*}
 \begin{array}{ll}
\int _0^t \vert Q(t) -\hat{Q}(t)\vert  dl
 \leq \int _0^t \Vert u(\cdot,t) \Vert _1 dl.
 \end{array}
 \end{equation*}
Thus,
\begin{equation*}
\begin{array}{ll}
\Vert u(\cdot,t) \Vert _1 \leq e^{c_1 T} \left( \Vert u^0 \Vert _1 + c_2 \int _0^t \Vert u(\cdot,t) \Vert _1  dl
\right).
\end{array}
\end{equation*}
Using Gronwall's inequality we have
\begin{equation*}
\begin{array}{ll}
\Vert u(\cdot,t) \Vert _1 \leq e^{(c_1 T+c_2e^{c_1 T})} \Vert u^0 \Vert _1.
\end{array}
\end{equation*}
The result follows by letting $\rho=e^{(c_1 T+c_2e^{c_1 T})} $.
\end{proof}

\section{\large A second order finite difference scheme}\label{}\medskip
To achieve an accurate approximation the first order upwind scheme we discussed in the previous section would require
many grid points and thus is time consuming. In this section we develop the following second order finite difference
scheme for the DSSM based on minmod MUSCL schemes~\cite{LeVeque,SSZ}.\\
\begin{equation}\label{scheme_SOE}
\begin{array}{ll}
\frac{p_{i}^{k+1}-p_{i}^{k}}{\Delta t} +\frac{{\hat{f}_{i+\frac{1}{2}}^{k}}-\hat{f}_{i-\frac{1}{2}}^{k}}{\Delta s} =
-\mu_{i}^{k}p_{i}^{k}+{\sum_{j=0}^{N}}^{\bigstar}\beta_{i,j}^{k}p_{j}^{k}\Delta s,
& i=1,2,\cdots,N,\quad k=0,1,\cdots, L-1,\\
\gamma_{0}^{k}p_{0}^{k}=0, &  k=0,1,\cdots, L,\\
\end{array}
\end{equation}
with the initial condition $p_{i}^{0}=p^{0}(s_i)$.
Here $Q^k$ is discretized using a second order Trapezoidal rule. That is, $$Q^{k}={\sum_{i=0}^{N}}^{\bigstar}
p_{i}^{k}\Delta s=\frac{1}{2}p_0^k \Delta s+\sum_{i=1}^{N-1} p_{i}^{k} \Delta s+\frac{1}{2}p_N^k \Delta s.$$
Similarly,
$${\sum_{j=0}^{N}}^{\bigstar}\beta_{i,j}^{k}p_{j}^{k}\Delta s=\frac{1}{2}\beta_{i,0}^{k}p_{0}^{k}\Delta
s+\sum_{j=1}^{N-1}\beta_{i,j}^{k}p_{j}^{k}\Delta s +\frac{1}{2}\beta_{i,N}^{k}p_{N}^{k}\Delta s.$$\\
\noindent
The finite difference scheme~(\ref{scheme_SOE}) can be rewritten as
\begin{equation}
p_{i}^{k+1} = p_{i}^{k}-\frac{\Delta t}{\Delta s}(\hat{f}_{i+\frac{1}{2}}^{k}-\hat{f}_{i-\frac{1}{2}}^{k})-\mu_{i}^{k}
p_{i}^{k} \Delta t+ \left({\sum_{j=0}^{N}}^{\bigstar} \beta_{i,j}^{k} p_{j}^{k} \Delta s\right) \Delta t, \quad
i=1,2,\cdots,N.
\label{eql_p_i^k+1}
\end{equation}
Here the limiter is defined as
\begin{equation}\label{eq:fhat}
 \hat{f}_{i+\frac{1}{2}}^{k} =
\left\{
\begin{array}{ll}
\gamma_{i}^{k} p_{i}^{k}+\frac{1}{2}(\gamma_{i+1}^{k}-\gamma_{i}^{k})p_i^k+\frac{1}{2}\gamma_{i}^{k}
mm(\Delta_{+}p_{i}^{k},\Delta_{-} p_{i}^{k}), & i=2,\cdots, N-2,
\\ \gamma_{i}^{k}p_{i}^{k}, & i=0,1,N-1,N.
\end{array}
\right.
\end{equation}
The minmod function $mm$ is defined by
\begin{equation*}
\begin{array}{ll}
 mm(a,b) = \frac{sign(a)+sign(b)}{2} \min(\vert a \vert,\vert b \vert).
 \end{array}
 \end{equation*}
Therefore,
\begin{equation*}
\begin{array}{ll}
 0\leqslant \frac{mm(a,b)}{a} \leqslant 1 \hspace{20pt} \text{and} \hspace{20pt} 0\leqslant \frac{mm(a,b)}{b}\leqslant
 1, \hspace{20 pt} \forall a,b\neq 0.
 \end{array}
 \end{equation*}
As in~\cite{SSZ} we define $B_i^k$ and $D_i^k$ by
\begin{equation*}
\begin{array}{ll}
B_{i}^{k} =
\left\{
\begin{array}{ll}
\dfrac{1}{2}\left(\gamma_{i+1}^{k}+\gamma_{i}^{k}+\gamma_{i}^{k}\dfrac{\emph{mm}(\Delta_{+}p_i^k,
\Delta_{-}p_i^k)}{\Delta_{-}p_i^k}-\gamma_{i-1}^{k}
\dfrac{\emph{mm}(\Delta_{-}p_i^k, \Delta_{-}p_{i-1}^k)}{\Delta_{-}p_i^k}\right), & i=3,\dots ,N-2,\\
\dfrac{1}{2}\left(\gamma_{i+1}^{k}+\gamma_{i}^{k}+\gamma_{i}^{k}\dfrac{\emph{mm}(\Delta_{+}p_i^k,\Delta_{-}p_{i}^k)}{\Delta_{-}p_i^k}\right),&
i=2,\\
\dfrac{1}{2}\left(2\gamma_{i}^{k}-\gamma_{i-1}^{k}\dfrac{\emph{mm}(\Delta_{-}p_i^k,\Delta_{-}p_{i-1}^k)}{\Delta_{-}p_i^k}\right),
& i=N-1,\\
\gamma_{i}^{k}, & i=1,N,
\end{array}
\right.
\end{array}
\end{equation*}
\begin{equation*}
\begin{array}{ll}
D_{i}^{k} =
\left\{
\begin{array}{ll}
\dfrac{1}{2}\left(\Delta_{+}\gamma_{i}^{k}+\Delta_{-}\gamma_{i}^{k}\right), & i=3,\dots ,N-2,\\
\dfrac{1}{2} \Delta_{+} \gamma_{i}^{k} +\Delta_{-} \gamma_{i}^{k}, & i=2,\\
\dfrac{1}{2} \Delta_{-} \gamma_{i}^{k}, & i=N-1,\\
\Delta_{-}\gamma_{i}^{k},& i=1,N.
\end{array}
\right.
\end{array}
\end{equation*}
Note that
\begin{equation*}
\begin{array}{ll}
2(B_{i}^{k}-D_{i}^{k})=
\left\{
\begin{array}{ll}
\gamma_{i}^{k}\left(1+\frac{mm(\Delta_{+} p_{i}^{k},\Delta_{-} p_{i}^{k})}{\Delta_{-} p_{i}^{k}}
\right)+\gamma_{i-1}^{k}\left(1-\frac{mm(\Delta_{-}p_{i}^{k},\Delta_{-}p_{i-1}^{k})}{\Delta_{-}p_{i}^{k}}\right),&
i=3,\cdots ,N-2,\\
2\gamma_{i-1}^{k}+\gamma_{i}^{k}\frac{mm(\Delta_{+} p_{i}^{k},\Delta_{-}p_{i}^{k})}{\Delta_{-}p_{i}^{k}} ,& i=2 , \\
\gamma_{i}^{k}+\gamma_{i-1}^{k}\left(1-\frac{mm(\Delta_{-}p_{i}^{k},\Delta_{-}p_{i-1}^{k})}{\Delta_{-}p_{i}^{k}}\right),
& i= N-1,\\
2\gamma_{i-1}^{k}, & i=1, N.
\end{array}
\right.
\end{array}
\end{equation*}
One can easily see from assumption (H1) that
\begin{equation}\label{BD}
\begin{array}{cc}
\vert B_{i}^{k}\vert \leqslant \frac{3}{2} \sup_{ \mathbb{D}_1} \vert \gamma\vert\leqslant \frac{3}{2} c, \quad\quad
B_{i}^{k} - D_{i}^{k}\geq 0.
\end{array}
\end{equation}
The finite difference scheme~(\ref{scheme_SOE}) can then be written in a more compact way as follows:
\begin{equation}
\begin{array}{ll}
p_{i}^{k+1} = \left(1-\frac{\Delta t}{\Delta s}B_{i}^{k}-\mu_{i}^{k}\Delta t\right)p_i^k+\frac{\Delta t}{\Delta
s}(B_{i}^{k}-D_{i}^{k})p_{i-1}^{k}+\left({\sum_{j=0}^{N}}^{\bigstar}\beta_{i,j}^{k}p_{j}^{k}
\Delta s\right)\Delta t, & \text{for}\quad i=1,2,\cdots, N.
\end{array}
\label{equ:scheme_compact}
\end{equation}

\subsection{\large Estimates of the finite difference scheme}\label{}\medskip
From a biological point of view, it is very important that our scheme preserves non-negativity of solutions. We will
first show this property in the following lemma.
\begin{lem} \label{lem_L1_bnd_0}
The finite difference scheme~(\ref{scheme_SOE}) has a unique nonnegative solution.
\end{lem}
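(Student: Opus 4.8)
The plan is to run a single induction on the time level $k$, using that the scheme is \emph{explicit}: given the grid function $p^k$, formula \eqref{equ:scheme_compact} together with the boundary relation $\gamma_0^k p_0^k=0$ determines $p^{k+1}$ uniquely (since $0\in[0,1)$, assumption (H1) gives $\gamma_0^k=\gamma(0,Q^k)>0$, so that relation forces $p_0^k=0$). Hence existence and uniqueness of the solution of \eqref{scheme_SOE} are immediate, and the only real content of the lemma is the non-negativity $p_i^k\ge 0$ for all $i$ and $k$.

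For the induction, the base case $k=0$ is exactly hypothesis (H4), so $p_i^0=p^0(s_i)\ge 0$. For the inductive step assume $p_i^k\ge 0$ for all $i$. Then $p_0^k=0$ as above, whence $Q^k=\frac12 p_0^k\Delta s+\sum_{i=1}^{N-1}p_i^k\Delta s+\frac12 p_N^k\Delta s\ge 0$, so the coefficient functions $\gamma_i^k$, $\mu_i^k$, $\beta_{i,j}^k$ are evaluated at admissible arguments and the bounds of (H1)--(H3) apply. From the compact form \eqref{equ:scheme_compact}, $p_i^{k+1}$ is a linear combination of $p_i^k$, $p_{i-1}^k$ and the values $\{p_j^k\}_{j=0}^N$ appearing in the recruitment term, with coefficients $1-\frac{\Delta t}{\Delta s}B_i^k-\mu_i^k\Delta t$, then $\frac{\Delta t}{\Delta s}(B_i^k-D_i^k)$, and the trapezoidally weighted products $\beta_{i,j}^k\,\Delta s\,\Delta t$ (weight $\frac12$ for $j=0,N$ and $1$ otherwise), respectively; it suffices to check that each coefficient is non-negative. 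The recruitment coefficients are non-negative because $\beta\ge 0$ by (H3). The coefficient of $p_{i-1}^k$ is non-negative because $B_i^k-D_i^k\ge 0$ by \eqref{BD}. For the coefficient of $p_i^k$, the bound $|B_i^k|\le\frac32 c$ from \eqref{BD} and $0\le\mu_i^k\le c$ from (H2) give
$$1-\frac{\Delta t}{\Delta s}B_i^k-\mu_i^k\Delta t\;\ge\;1-c\,\frac{3\Delta t}{2\Delta s}-c\,\Delta t\;\ge\;0$$
by the CFL condition (H5). Together with $p^k\ge 0$ this yields $p_i^{k+1}\ge 0$ for $i=1,\dots,N$, and $p_0^{k+1}=0$; the induction is complete.

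The one point needing genuine care --- and which I regard as the (modest) main obstacle --- is the verification of \eqref{BD} itself, namely that $|B_i^k|\le\frac32 c$ and $B_i^k-D_i^k\ge 0$ in each of the cases $i=1$, $i=2$, $3\le i\le N-2$, $i=N-1$, $i=N$. Here one invokes the elementary minmod estimates $0\le mm(a,b)/a\le 1$ and $0\le mm(a,b)/b\le 1$ (for $a,b\ne 0$) applied to the ratios $mm(\Delta_+p_i^k,\Delta_-p_i^k)/\Delta_-p_i^k$ and $mm(\Delta_-p_i^k,\Delta_-p_{i-1}^k)/\Delta_-p_i^k$ occurring in the displayed formula for $2(B_i^k-D_i^k)$; one also notes that whenever a denominator $\Delta_-p_i^k$ vanishes the matching minmod in the numerator vanishes as well, so --- reading those terms through the actual flux $\hat f_{i\pm\frac12}^k$ of \eqref{eq:fhat} rather than through the ratios --- the corresponding products stay well defined and the bounds persist. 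With the ratios confined to $[0,1]$, each case of $2(B_i^k-D_i^k)$ is a non-negative combination of the values $\gamma_{i-1}^k,\gamma_i^k,\gamma_{i+1}^k$, which lie in $[0,c]$ by (H1), giving $B_i^k-D_i^k\ge 0$; the same case analysis applied directly to the formula for $B_i^k$ yields $|B_i^k|\le\frac32 c$. Once \eqref{BD} is in hand the short induction above finishes the proof.
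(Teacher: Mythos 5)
Your proof is correct and follows essentially the same route as the paper: induction on the time level $k$, with $p_0^k=0$ forced by (H1) and the boundary relation, and non-negativity of $p_i^{k+1}$ obtained from the compact form \eqref{equ:scheme_compact} because the coefficient of $p_i^k$ is non-negative by $|B_i^k|\le\frac32 c$, (H2) and the CFL condition (H5), the coefficient of $p_{i-1}^k$ is non-negative by $B_i^k-D_i^k\ge 0$ from \eqref{BD}, and the recruitment weights are non-negative by (H3). The only difference is that you spell out the verification of \eqref{BD} (including the degenerate case $\Delta_-p_i^k=0$), which the paper simply asserts as an easy consequence of (H1).
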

\begin{proof}
 From assumption (H4) we have $p_{i}^{0}\geq 0$ for $i=0,1,\cdots, N$. Also, by the second equation
 in~(\ref{scheme_SOE}) and assumption (H1), $p_{0}^{k}=0$ for $k\geq 0$.
Moreover, by assumptions (H1)-(H3) and (H5), one observes that
\begin{equation}
      1-\frac{\Delta t}{\Delta s}B_{i}^{k}-\mu_{i}^{k}\Delta t \geq 1-\frac{\Delta t}{\Delta s}\frac{3}{2}
      \sup_{\mathbb D_1} \vert\gamma\vert-\sup_{\mathbb D_{1}}\vert \mu\vert \Delta t
     \geq 1- \frac{\Delta t}{\Delta s} \frac{3}{2}c-c\Delta t\geq 0.
\end{equation}
Therefore, by induction it follows that $ p_{i}^{k}\geq 0$ for $i=1,2,\cdots, N$, $k\geq 1$, and thus the system has a
unique nonnegative solution.
\end{proof}
The next lemma shows that the numerical approximations are  bounded in $\ell^{1}$ norm.
\begin{lem} \label{lem_L1_bnd_1}
For some positive constant $M_{5}$, the following estimate holds.
\begin{equation}
\Vert p^{k} \Vert _{1} \leq \exp(c T) \Vert p^{0} \Vert _{1}\equiv M_{5} , \quad\quad \text{for}\quad k=0,1,\cdots, L.
\end{equation}
\end{lem}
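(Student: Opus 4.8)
The plan is to reproduce, for the second-order scheme, the argument that produced the $\ell^1$ bound of Lemma~\ref{lem_FO1}. The decisive structural feature is that scheme~(\ref{scheme_SOE}) is in conservation form, so that summing the flux differences over the grid telescopes them down to the two boundary fluxes, which vanish. Concretely, I would start from the first equation of~(\ref{scheme_SOE}), multiply by $\Delta s\,\Delta t$, and sum over $i=1,\dots,N$ to get
\begin{align*}
\sum_{i=1}^N p_i^{k+1}\Delta s &=\sum_{i=1}^N p_i^k\Delta s-\Delta t\sum_{i=1}^N\bigl(\hat f_{i+\frac12}^k-\hat f_{i-\frac12}^k\bigr)\\
&\quad-\Delta t\sum_{i=1}^N\mu_i^k p_i^k\Delta s+\Delta t\,\Delta s\sum_{i=1}^N{\sum_{j=0}^{N}}^{\bigstar}\beta_{i,j}^k p_j^k\Delta s.
\end{align*}

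The telescoping gives $\sum_{i=1}^N(\hat f_{i+\frac12}^k-\hat f_{i-\frac12}^k)=\hat f_{N+\frac12}^k-\hat f_{\frac12}^k$; by the definition~(\ref{eq:fhat}) these are $\gamma_N^k p_N^k$ and $\gamma_0^k p_0^k$, and both vanish — the first because $\gamma(1,Q)=0$ by (H1), the second because $p_0^k=0$ (second equation of~(\ref{scheme_SOE}) with (H1), cf.\ Lemma~\ref{lem_L1_bnd_0}). The $\mu$-term is $\le 0$ by (H2) and is discarded. For the $\beta$-term I would use $\beta_{i,j}^k\le c$ from (H3) together with the nonnegativity furnished by Lemma~\ref{lem_L1_bnd_0} and the fact that, since $p_0^k=0$ and the trapezoidal weight on $p_N^k$ is $\tfrac12\le 1$, one has ${\sum_{j=0}^{N}}^{\bigstar}\beta_{i,j}^k p_j^k\Delta s\le c\,{\sum_{j=0}^{N}}^{\bigstar}p_j^k\Delta s=cQ^k\le c\|p^k\|_1$; summing over $i$ and using $N\Delta s=1$ bounds the last term by $c\Delta t\|p^k\|_1$. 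This yields $\|p^{k+1}\|_1\le(1+c\Delta t)\|p^k\|_1$, and iterating gives $\|p^k\|_1\le(1+c\Delta t)^k\|p^0\|_1\le(1+c\Delta t)^L\|p^0\|_1\le\exp(cT)\|p^0\|_1=M_5$.

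I do not expect a genuine obstacle here; the proof is essentially bookkeeping. The two points needing a little attention are (i) confirming that the boundary fluxes $\hat f_{\frac12}^k$ and $\hat f_{N+\frac12}^k$ vanish — this is exactly where the hypothesis $\gamma(1,\cdot)=0$ and the homogeneous boundary condition enter — and (ii) checking that replacing the right-endpoint Riemann sum of the first-order scheme by the trapezoidal quadrature ${\sum}^{\bigstar}$ does no harm, which holds precisely because $p_0^k=0$ eliminates the left endpoint while the weight $\tfrac12$ on the right endpoint only helps. The minmod limiter is irrelevant to this estimate, since it modifies only the interior fluxes, which are absorbed in the telescoping sum.
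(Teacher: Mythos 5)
Your proposal is correct and follows essentially the same route as the paper: multiply by $\Delta s$, sum over $i$, telescope the conservative fluxes to $\gamma_N^k p_N^k-\gamma_0^k p_0^k$ (which drop out by $\gamma(1,\cdot)=0$ and $p_0^k=0$), discard the nonpositive $\mu$-term, bound the $\beta$-term by $c\|p^k\|_1\Delta t$, and iterate the resulting $(1+c\Delta t)$ recursion. No gaps; the remarks on the trapezoidal sum and the irrelevance of the limiter match the paper's treatment.
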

\begin{proof}
Multiplying the first equation in~(\ref{eql_p_i^k+1}) by $\Delta s$
and summing over $i=1,2,\cdots,N,$ we have
\begin{equation*}
\begin{array}{ll}
\Vert p^{k+1}\Vert_{1}&=\sum_{i=1}^{N}p_{i}^{k} \Delta s
-\sum_{i=1}^{N}(\hat{f}_{i+\frac{1}{2}}^{k}-\hat{f}_{i-\frac{1}{2}}^{k}) \Delta t -\sum_{i=1}^{N}\mu_{i}^{k}p_{i}^{k}
\Delta t \Delta s +\sum_{i=1}^{N}\left({\sum_{j=0}^{N}}^{\bigstar}\beta_{i,j}^{k}p_{j}^{k}\Delta s\right)\Delta s\Delta
t \\
&=\Vert p^{k}\Vert _{1}-(\gamma_{N}^{k}p_{N}^{k}-\gamma_{0}^{k}p_{0}^{k})\Delta t-
\sum_{i=1}^{N}\mu_{i}^{k}p_{i}^{k}\Delta s\Delta
t+\sum_{i=1}^{N}\left({\sum_{j=0}^{N}}^{\bigstar}\beta_{i,j}^{k}p_{j}^{k}\Delta s\right)\Delta s\Delta t.
\end{array}
\end{equation*}
Therefore, by assumptions (H1)-(H3) one can see that
\begin{equation*}
\begin{array}{ll}
\Vert p^{k+1}\Vert_{1}&\leq \Vert p^{k}\Vert _{1}+\sum_{i=1}^{N}({\sum_{j=0}^{N}}^{\bigstar}\beta_{i,j}^{k}\Delta s
p_{j}^{k})\Delta s\Delta t\\
&\leq \Vert p^{k}\Vert_{1}+c \sum_{i=1}^{N}\Vert p^{k}\Vert_{1}\Delta s\Delta t \\
&\leq(1+c\Delta t)\Vert p^{k}\Vert_{1},
\end{array}
\end{equation*}
which implies the estimate.
\end{proof}
\noindent
Note that
\begin{eqnarray*}
Q^{k}={\sum _{i=0}^{N}}^\bigstar p_{i}^{k}\Delta s=\sum _{i=1}^{N} p_i^k\Delta s-\frac{1}{2}p_N^k\Delta s\leqslant
\sum_{i=1}^{N}p_{i}^{k}\Delta s=\Vert p^{k}\Vert_{1}\leq M_{5}.
\end{eqnarray*}
We now define $\mathbb{D}_4=[0,1]\times [0, M_5]$.

The following lemma establishes $l^\infty$ bounds of the numerical approximations.
\begin{lem} \label{lem_L1_bnd_2}
There exists a positive constant $M_{6}$ such that$$\Vert p^{k}\Vert_{\infty}\leqslant M_{6},\quad\quad\text{for}\quad
k=0,1,\cdots,L.$$
\end{lem}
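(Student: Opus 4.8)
The plan is to mimic the $\ell^\infty$ argument used in the first-order case (Lemma~\ref{lem_FO2}), but now working from the compact form \eqref{equ:scheme_compact} of the second-order scheme. Since $p_0^k=0$ for all $k$, the maximum $\|p^{k+1}\|_\infty$ is attained at some index $1\le i\le N$. First I would take absolute values in \eqref{equ:scheme_compact} and use the two structural facts already recorded in \eqref{BD}, namely $B_i^k-D_i^k\ge 0$ and $|B_i^k|\le \tfrac32 c$, together with the CFL condition (H5) which guarantees the coefficient $1-\tfrac{\Delta t}{\Delta s}B_i^k-\mu_i^k\Delta t$ is nonnegative (this was verified inside the proof of Lemma~\ref{lem_L1_bnd_0}). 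This lets me bound
\[
|p_i^{k+1}|\le \Bigl(1-\tfrac{\Delta t}{\Delta s}B_i^k-\mu_i^k\Delta t\Bigr)\|p^k\|_\infty+\tfrac{\Delta t}{\Delta s}(B_i^k-D_i^k)\|p^k\|_\infty+\Bigl({\sum_{j=0}^{N}}^{\bigstar}\beta_{i,j}^k\,\Delta s\Bigr)\|p^k\|_\infty\,\Delta t,
\]
so that the growth is controlled by $1-\tfrac{\Delta t}{\Delta s}D_i^k-\mu_i^k\Delta t+\bigl({\sum_{j=0}^{N}}^{\bigstar}\beta_{i,j}^k\Delta s\bigr)\Delta t$.

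The key point is then to absorb the term $\tfrac{\Delta t}{\Delta s}|D_i^k|$. Here $D_i^k$ is (up to the factor $\tfrac12$ and the harmless boundary modifications) $\tfrac12(\Delta_+\gamma_i^k+\Delta_-\gamma_i^k)$, i.e.\ a difference of the growth rate evaluated at neighbouring grid points. Since $\gamma$ is $C^1$ in $s$ with $|\gamma_s|\le c$ on $\mathbb D_4$ by (H1), each such difference is $O(\Delta s)$; concretely $|D_i^k|\le \tfrac32\sup_{\mathbb D_4}|\gamma_s|\,\Delta s\le \tfrac32 c\,\Delta s$ (the constant $\tfrac32$ covering the $i=2$ boundary case where the two difference terms are not symmetrically weighted). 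Hence $\tfrac{\Delta t}{\Delta s}|D_i^k|\le \tfrac32 c\,\Delta t$. Combined with $\mu_i^k\ge 0$ and the uniform bound ${\sum_{j=0}^{N}}^{\bigstar}\beta_{i,j}^k\Delta s\le c$ from (H3), I obtain
\[
\|p^{k+1}\|_\infty\le \bigl(1+\tfrac52 c\,\Delta t\bigr)\|p^k\|_\infty,
\]
and iterating gives $\|p^k\|_\infty\le \exp(\tfrac52 cT)\|p^0\|_\infty=:M_6$, which is the claimed bound (the precise constant in the exponent is immaterial).

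I expect the only real subtlety to be the careful bookkeeping at the boundary indices $i=1,2,N-1,N$, where the formulas for $B_i^k$ and $D_i^k$ differ from the interior case and one must check that $|D_i^k|\le C\,\Delta s$ still holds and that the coefficient multiplying $\|p^k\|_\infty$ in the recursion is still of the form $1+O(\Delta t)$. For $i=1,N$ one has $D_i^k=\Delta_-\gamma_i^k$ and $B_i^k=\gamma_i^k$, for which everything is immediate; the case $i=2$, where $D_i^k=\tfrac12\Delta_+\gamma_i^k+\Delta_-\gamma_i^k$, is the one forcing the slightly larger constant. Once those cases are dispatched, combining them with the interior estimate yields the uniform recursion above, and the discrete Gronwall step finishes the proof.
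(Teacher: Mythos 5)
Your proposal is correct and follows essentially the same route as the paper: take the maximum over $i$ in the compact form \eqref{equ:scheme_compact}, use the sign/size facts \eqref{BD} together with the CFL condition (H5) to keep all coefficients nonnegative, absorb the remaining $-\tfrac{\Delta t}{\Delta s}D_i^k$ term via $|D_i^k|\le \tfrac32 c\,\Delta s$ (the $i=2$ case giving the worst constant), and iterate to get $\Vert p^k\Vert_\infty\le (1+\tfrac52 c\Delta t)^k\Vert p^0\Vert_\infty\le \exp(\tfrac52 cT)\Vert p^0\Vert_\infty$. This matches the paper's argument, including the constant $\tfrac52 c$ in the growth factor.
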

\begin{proof}
If $\Vert p^{k}\Vert_{\infty}$ is obtained at the left boundary then $\Vert p^{k}\Vert_{\infty}= p_{0}^{k}=0$ for
$k\geq 0$. Otherwise, assume that $p_{i}^{k+1}=\Vert p^{k+1} \Vert_{\infty}$, for some $1\leqslant i \leqslant N$.
From equation~(\ref{equ:scheme_compact}), assumptions (H1)-(H3) and (H5) we have
\begin{eqnarray*}
\|p^{k+1}\|_{\infty} &\leqslant &\left(1-\frac{\Delta t}{\Delta s}B_{i}^{k}-\mu_{i}^{k}\Delta t\right)\Vert
p^{k}\Vert_{\infty}+\frac{\Delta t}{\Delta s}(B_{i}^{k}-D_{i}^{k})\Vert p^{k}\Vert _{\infty}
+\left({\sum_{j=0}^{N}}^{\bigstar}\beta_{i,j}^{k}\Delta s\right)\Vert p^{k}\Vert_{\infty} \Delta t\\
&\leqslant &(1+c\Delta t) \Vert p^{k}\Vert_{\infty}-\frac{\Delta t}{\Delta s} D_{i}^{k}\Vert p^{k}\Vert_{\infty}.
\end{eqnarray*}
By assumption (H1), $|\gamma_{i}^{k}-\gamma_{i-1}^{k}|=|\gamma_{s}(\hat{s}_{i},Q^{k})|\Delta s \leqslant c\Delta s$ and
thus $-D_i^k\leq \frac{3}{2}c \Delta s$.\\
Therefore,
 $$\Vert p^{k+1}\Vert _{\infty}\leqslant (1+\frac{5}{2}c\Delta t)^{k+1}\Vert p^{0}\Vert_{\infty}.$$
 The result then follows easily from the above inequality.
\end{proof}
In the next lemma we show that the approximations $p_{i}^{k}$ are of bounded total variation.
\begin{lem}\label{lem_L1_bnd_3}
There exists a constant $M_{7}$ such that $$TV(p^{k})\leqslant M_7, \quad\quad\text{for}\quad k=0,1,\cdots,L.$$
\end{lem}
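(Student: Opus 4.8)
The strategy is to derive a recursion of the form $TV(p^{k+1}) \leq (1+c_1\Delta t)\,TV(p^k) + c_2\Delta t$ and then invoke the discrete Gronwall argument, exactly as in the first-order case (Lemma~\ref{lem_FO3}). Starting from the compact form~\eqref{equ:scheme_compact}, I would compute the difference $p_{i+1}^{k+1} - p_i^{k+1}$ for the interior indices. The leading transport part contributes terms of the shape $(1 - \frac{\Delta t}{\Delta s}B_{i+1}^k - \mu_{i+1}^k\Delta t)(p_{i+1}^k - p_i^k)$ plus $\frac{\Delta t}{\Delta s}(B_i^k - D_i^k)(p_i^k - p_{i-1}^k)$, together with a collection of "commutator'' terms measuring the variation of the coefficients $B_i^k$, $D_i^k$ and the limiter ratios. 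The sign conditions from~\eqref{BD}, namely $|B_i^k| \leq \frac{3}{2}c$ and $B_i^k - D_i^k \geq 0$, together with the CFL condition (H5), ensure the coefficients $1 - \frac{\Delta t}{\Delta s}B_{i+1}^k - \mu_{i+1}^k\Delta t$ and $\frac{\Delta t}{\Delta s}(B_i^k - D_i^k)$ are nonnegative, so taking absolute values and summing over $i$ telescopes the main transport contribution just as in~\eqref{FO3_2}.

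The remaining terms must be bounded by $c\,\|p^k\|_1$, $c\,TV(p^k)$, or $c\,\|p^k\|_\infty$ times $\Delta t$, using Lemmas~\ref{lem_L1_bnd_1} and~\ref{lem_L1_bnd_2}. Specifically: the $\mu$-terms are handled as in~\eqref{FO3_4}, splitting $\mu_{i+1}^kp_{i+1}^k - \mu_i^kp_i^k$ into a coefficient-variation piece (bounded via the Lipschitz constant of $\mu$ and $\|p^k\|_1$) and a solution-variation piece (bounded by $c\,\Delta t\,TV(p^k)$); the $\beta$-terms are handled as in~\eqref{FO3_5}, using the uniform bounded-variation-in-$s$ hypothesis in (H3) to get $\sum_i|\beta_{i+1,j}^k - \beta_{i,j}^k| \leq c$, so the whole double sum is $\leq c\,\|p^k\|_1\,\Delta t$. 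The boundary terms $p_1^{k+1}$ (as in~\eqref{FO3_1}) and the differences near $i = 1, 2, N-2, N-1$, where the limiter formula changes cases, must be treated separately but each contributes only $O(\Delta t)$ times the already-bounded quantities.

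The genuinely new difficulty, absent from the first-order analysis, is controlling the variation of the \emph{limiter-dependent} coefficients $B_i^k$, i.e. bounding sums like $\sum_i |B_{i+1}^k - B_i^k|\,p_i^k$ and the mismatch $\sum_i|(B_{i+1}^k - D_{i+1}^k) - (B_i^k - D_i^k)|\,p_{i-1}^k$. The ratios $\frac{mm(\Delta_+p_i^k, \Delta_-p_i^k)}{\Delta_-p_i^k}$ and $\frac{mm(\Delta_-p_i^k, \Delta_-p_{i-1}^k)}{\Delta_-p_i^k}$ lie in $[0,1]$ but are not Lipschitz in $i$, so one cannot simply differentiate them. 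The standard device (as in~\cite{SSZ}) is to absorb these into the telescoping structure: rather than estimating $B_{i+1}^k - B_i^k$ directly, one regroups the full second-order flux difference $\hat f_{i+3/2}^k - 2\hat f_{i+1/2}^k + \hat f_{i-1/2}^k$ and exploits that $mm(a,b)$ satisfies $|mm(a,b)| \leq \min(|a|,|b|)$, so each minmod term is dominated by a first difference of $p^k$; after the double summation these collapse to a constant multiple of $TV(p^k)$ plus $\sup|\gamma_s|\cdot\Delta s\cdot(\text{TV-type sums})$. I expect this regrouping—and the careful bookkeeping of the case-split indices near the two boundaries—to be the main obstacle; once it is done, the recursion $TV(p^{k+1}) \leq (1 + c_1\Delta t)\,TV(p^k) + c_2\Delta t$ follows and iterating gives $TV(p^k) \leq e^{c_1T}(TV(p^0) + c_2T/c_1) \equiv M_7$.
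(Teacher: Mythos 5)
Your overall skeleton is the paper's route: derive $TV(p^{k+1})\le (1+c_1\Delta t)TV(p^k)+c_2\Delta t$ from the compact form~(\ref{equ:scheme_compact}), use~(\ref{BD}) and (H5) to make the coefficients of $\Delta_{+}p_i^k$ and $\Delta_{-}p_i^k$ nonnegative so the transport part telescopes, treat the $\mu$- and $\beta$-terms as in Lemma~\ref{lem_FO3}, handle the case-split indices separately, and conclude by discrete Gronwall. The gap lies in the term you single out as the ``genuinely new difficulty.'' With the grouping you yourself write down --- coefficient $\bigl(1-\frac{\Delta t}{\Delta s}B_{i+1}^k\bigr)$ on $p_{i+1}^k-p_i^k$ and $\frac{\Delta t}{\Delta s}(B_i^k-D_i^k)$ on $p_i^k-p_{i-1}^k$ --- the algebra leaves exactly one coefficient-variation term, namely $-\frac{\Delta t}{\Delta s}\,(D_{i+1}^k-D_i^k)\,p_i^k$; no difference of $B_i^k$ and no limiter ratio survives. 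Since $D_i^k$ is built from differences of $\gamma$ alone, (H1) gives $|D_{i+1}^k-D_i^k|\le c(\Delta s)^2$ at interior indices and $|D_i^k|\le\frac32 c\,\Delta s$ at the case-split indices, so this term contributes only $c\|p^k\|_1\Delta t$ plus a constant times $\|p^k\|_\infty\Delta t$, and the recursion closes. The sums you flag as the main obstacle, $\sum_i|B_{i+1}^k-B_i^k|\,p_i^k$ and $\sum_i\bigl|(B_{i+1}^k-D_{i+1}^k)-(B_i^k-D_i^k)\bigr|\,p_{i-1}^k$, simply never arise and indeed must not be allowed to arise.

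This matters because your fallback for those (phantom) terms would not close the argument: the limiter ratios can jump by $O(1)$ between adjacent cells, so there is no Lipschitz-in-$i$ control of $B_i^k$, and dominating the minmod contributions by first differences of $p^k$ only produces a bound of order $\frac{\Delta t}{\Delta s}\,TV(p^k)$, which under the CFL condition (H5) is $O(1)\cdot TV(p^k)$ rather than $O(\Delta t)\cdot TV(p^k)$; iterating such a recursion over $L=T/\Delta t$ steps blows up as $\Delta t\to 0$. The factor $\frac{1}{\Delta s}$ is absorbed only through the exact cancellation described above, i.e.\ the telescoping of the $B$-weighted differences together with the fact that the sole surviving commutator involves second differences of $\gamma$, bounded via the Lipschitz continuity of $\gamma_s$ in (H1). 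Since your proposal defers precisely this decisive regrouping and the estimate you sketch in its place would fail if carried out as stated, the key second-order-specific step of the proof is missing.
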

\begin{proof}
From~(\ref{equ:scheme_compact}) we have
\begin{equation*}
\begin{array}{ll}
p_{i+1}^{k+1}-p_{i}^{k+1}&=
\left(1-\frac{\Delta t}{\Delta s}B_{i+1}^{k}\right)(p_{i+1}^{k}-p_{i}^{k})+\frac{\Delta t}{\Delta
s}(B_{i}^{k}-D_{i}^{k})(p_{i}^{k}-p_{i-1}^{k})-\frac{\Delta t}{\Delta s}(D_{i+1}^{k}-D_i^k)p_i^k\\
&\quad-\Delta t(\mu_{i+1}^{k} p_{i+1}^{k}-\mu _i^k p_i^k)+
\left({\sum_{j=0}^{N}}^{\bigstar}\beta_{i+1,j}^{k}p_{j}^{k}\Delta s
\right) \Delta t-\left({\sum_{j=0}^{N}}^{\bigstar}\beta_{i,j}^{k}p_{j}^{k}\Delta s
\right) \Delta t,\\
\end{array}
\end{equation*}
for $i=1,\cdots,N-1$.\\
Therefore,
\begin{equation}\label{eq:TV_SOE}
\begin{array}{ll}
TV(p^{k+1})&= \vert p_{1}^{k+1} - p_0^{k+1}\vert +\sum_{i=1}^{N-1}\vert p_{i+1}^{k+1}-p_i^{k+1}\vert\\
&\leqslant \vert p_1^{k+1}-p_0^{k+1}\vert +\sum_{i=1}^{N-1} \vert \left(1-\frac{\Delta t}{\Delta
s}B_{i+1}^{k}\right)(p_{i+1}^{k}-p_i^k)+\frac{\Delta t}{\Delta s}(B_i^k -D_i^k)(p_i^k-p_{i-1}^k)\vert\\
&\quad +\sum_{i=1}^{N-1}\vert D_{i+1}^k-D_i^k\vert p_i^k\frac{\Delta t}{\Delta s}+\sum_{i=1}^{N-1}\vert \mu_{i+1}^k
p_{i+1}^k-\mu _i^k p_i^k\vert \Delta t\\
&\quad +\sum_{i=1}^{N-1}\vert {\sum_{j=0}^{N}}^{\bigstar}\beta_{i+1,j}^{k}p_{j}^{k}\Delta
s-{\sum_{j=0}^{N}}^{\bigstar}\beta_{i,j}^{k}p_{j}^{k} \Delta s\vert \Delta t\\
&= \vert p_{1}^{k+1} - p_0^{k+1}\vert +I_1 +I_2+I_3+I_4.
\end{array}
\end{equation}
We now estimate the bound of $TV(p^k)$ term by term.
\begin{equation}
\begin{array}{ll}
\vert p_{1}^{k+1} - p_0^{k+1}\vert
&=\left(1-\frac{\Delta t}{\Delta s}B_1^k-\mu _1^k \Delta t\right)p_1^k+\frac{\Delta t}{\Delta
s}(B_1^k-D_1^k)p_0^k+\left(
{\sum_{j=0}^{N}}^{\bigstar}\beta_{1,j}^{k}p_{j}^{k} \Delta s\right) \Delta t\\
&=\left(1-\frac{\Delta t}{\Delta s}\gamma _1^k-\mu _1^k\Delta t
\right)p_1^k+\left({\sum_{j=0}^{N}}^{\bigstar}\beta_{1,j}^{k}p_{j}^{k} \Delta s\right) \Delta t.
\end{array}
\end{equation}
By assumptions (H1) and (H5),
\begin{equation}
\begin{array}{ll}
I_1
&\leqslant \sum_{i=1}^{N-1}\left(1-\frac{\Delta t}{\Delta s}B_{i+1}^{k}\right)\vert p_{i+1}^{k}-p_i^k\vert
+\frac{\Delta t}{\Delta s}(B_i^k -D_i^k)\vert p_i^k-p_{i-1}^k\vert\\
&\leqslant \sum_{i=1}^{N-1} \vert p_{i+1}^k -p_i^k\vert -\frac{\Delta t}{\Delta s} \sum_{i=1}^{N-1}
\left(B_{i+1}^k\vert p_{i+1}^{k}-p_i^k\vert -B_i^k\vert p_i^k-p_{i-1}^k\vert \right)-\frac{\Delta t}{\Delta
s}\sum_{i=1}^{N-1} D_i^k \vert p_i^k-p_{i-1}^k\vert\\
&\leqslant \sum_{i=1}^{N-1} \vert p_{i+1}^k -p_i^k\vert -\frac{\Delta t}{\Delta s}\left(B_N^k\vert p_N^k-p_{N-1}^k\vert
-B_1^k\vert p_1^k-p_0^k\vert \right)- \frac{\Delta t}{\Delta s}\sum_{i=1}^{N-1} D_i^k \vert p_i^k-p_{i-1}^k \vert\\
&\leqslant \sum_{i=1}^{N-1} \vert p_{i+1}^k -p_i^k\vert +\frac{\Delta t}{\Delta s}\gamma _1^k p_1^k +\frac{\Delta
t}{\Delta s}\sum_{i=1}^{N-1} \frac{3}{2} \sup_{\mathbb{D}_4}\vert \gamma _i^k-\gamma _{i-1}^k\vert\hspace{5pt} \vert
p_i^k-p_{i-1}^k\vert\\
&\leqslant \sum_{i=1}^{N-1} \vert p_{i+1}^k -p_i^k\vert+\frac{\Delta t}{\Delta s}\gamma _1^k p_1^k +\frac{3}{2}c
TV(p^k)\Delta t.
\end{array}
\end{equation}
By assumption (H1),
\begin{equation}\label{eq_I2}
\begin{array}{ll}
I_2
&= \sum_{i=3}^{N-3}\vert D_{i+1}^k-D_i^k\vert p_i^k\frac{\Delta t}{\Delta s}+\sum_{i=1,2,N-2,N-1} \vert D_{i+1}^k
-D_i^k\vert p_i^k \frac{\Delta t}{\Delta s}\\
&\leqslant \sum_{i=3}^{N-3} \vert D_{i+1}^k-D_i^k\vert p_i^k \frac{\Delta t}{\Delta s} +8 \sup_{\mathbb{D}_4} \vert
D_i^k\vert p_i^k \frac{\Delta t}{\Delta s}\\
&\leqslant \sum_{i=3}^{N-3} \vert D_{i+1}^k-D_i^k\vert p_i^k\frac{\Delta t}{\Delta s}+12 c \parallel p^k\parallel
_\infty \Delta t.
\end{array}
\end{equation}
From assumption $ (H1)$ we have
\begin{equation}\label{eq_D_D}
\begin{array}{ll}
\vert D_{i+1}^k-D_i^k\vert &= \frac{1}{2}\vert \left(\bigtriangleup _+\gamma _{i+1}^k+\bigtriangleup _-\gamma
_{i+1}^k\right) -\left(\bigtriangleup _+\gamma _i^k +\bigtriangleup _-\gamma _i^k\right) \vert \\
&=\frac{1}{2} \vert \left( \gamma _{i+2}^k -\gamma _{i+1}^k\right) -\left(\gamma _i^k-\gamma _{i-1}^k\right) \vert\\
&=\frac{1}{2}\vert \gamma _s(\hat{s}_{i+2},Q^k)\Delta s-\gamma _s(\hat{s}_i,Q^k)\Delta s\vert\\
&\leq  \frac{1}{2} c\vert \hat{s}_{i+2}-\hat{s}_i\vert \Delta s=c(\Delta s)^2,
\end{array}
\end{equation}
where $ \hat{s}_i \in [s_{i-1},s_i]$ and $\hat{s}_{i+2} \in [s_{i+1},s_{i+2}]$ for $i=3,4,\ldots,N-3$. \\
Therefore by combining~(\ref{eq_I2}) and~(\ref{eq_D_D}) we obtain
\begin{equation}
\begin{array}{ll}
I_2 \leqslant \sum_{i=3}^{N-3} c(\Delta s)^2 p_i^k\frac{\Delta t}{\Delta s}+12c\| p^k\| _\infty \Delta t
\leqslant c\| p^k\| _1 \Delta t+12 c\| p^k\| _\infty \Delta t.
\end{array}
\end{equation}
We have from assumption (H2) that
\begin{equation}
\begin{array}{ll}
I_3
& =  \sum_{i=1}^{N-1} \left| (\mu _{i+1}^k -\mu _i^k)p_{i+1}^k+\mu _i^k (p_{i+1}^k-p_i^k) \right| \Delta t\\
& \leq  c\Delta s \sum_{i=1}^{N-1} p_{i+1}^k \Delta t+\sup_{\mathbb{D}_4}\mu \sum_{i=1}^{N-1}\vert p_{i+1}^k-p_i^k\vert
\Delta t\\
& \leq  c \| p^k\| _1 \Delta t +c TV(p^k)\Delta t.
\end{array}
\end{equation}
By assumption (H3),
\begin{equation}\label{eq_I4}
\begin{array}{ll}
I_4 \leq{\sum_{j=0}^{N}}^{\bigstar}\left(\sum_{i=1}^{N-1}|\beta_{i+1,j}^{k}-\beta_{i,j}^{k}|\right)p_{j}^{k}\Delta
s\Delta t
\leq c{\sum_{j=0}^{N}}^{\bigstar}p_{j}^{k}\Delta s\Delta t\leq c\|p^{k}\|_{1}\Delta t.
\end{array}
\end{equation}
A combination of~(\ref{eq:TV_SOE})-(\ref{eq_I4}) then leads to
\begin{equation}
\begin{array}{ll}
TV(p^{k+1})&= \left( 1-\frac{\Delta t}{\Delta s} \gamma _1^k-\mu _1^k \Delta t\right) p_1^k +\left(
{\sum_{j=0}^{N}}^\bigstar \beta_{1,j}^kp_j^k\Delta s\right) \Delta t +
\sum_{i=1}^{N-1}|p_{i+1}^{k}-p_{i}^{k}|+\frac{\Delta t}{\Delta s}\gamma _1^k p_1^k\\
& \quad+\frac{3}{2} c \Delta t TV(p^k)
+ c\| p^k\| _1 \Delta t+12 c | p^k\| _\infty \Delta t
+ c \| p^k\| _1 \Delta t + c TV(p^k)\Delta t+ c\| p^k\| _1 \Delta t\\
&\leq  \left(1+c_1 \Delta t\right) TV(p^k)+c_2 \Delta t,
\end{array}
\end{equation}
where $c_{1}=\frac{5}{2}c$ and $c_{2}=4c M_5+12cM_6$.
The result then follows.
\end{proof}
Next we will show that the finite difference approximations are $\ell_{1}$ lipschitz continuous in $t$.
\begin{lem} \label{lem_L1_bnd_5}
There exists a positive constant $M_{8}$ such that for any $m>n>0$ the following estimates hold:
\begin{equation*}
\sum_{i=1}^{N} \left| \frac{p_i^m-p_i^n}{\Delta t}\right| \Delta s\leq M_8 (m-n).
\end{equation*}
\end{lem}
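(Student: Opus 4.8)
The plan is to follow the same two-stage argument used for Lemma~\ref{lem_FO4}: first establish a uniform one-step bound
$\sum_{i=1}^{N}\left|\frac{p_i^{k+1}-p_i^k}{\Delta t}\right|\Delta s \le M_8$ valid for every $k$, and then, for $m>n>0$, sum over $k=n,\dots,m-1$ and apply the triangle inequality to conclude
$\sum_{i=1}^{N}\left|\frac{p_i^{m}-p_i^{n}}{\Delta t}\right|\Delta s \le \sum_{k=n}^{m-1}\sum_{i=1}^{N}\left|\frac{p_i^{k+1}-p_i^k}{\Delta t}\right|\Delta s \le M_8(m-n)$.

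For the one-step bound I would start from the conservative form~(\ref{eql_p_i^k+1}), which gives
$\left|\frac{p_i^{k+1}-p_i^k}{\Delta t}\right| \le \frac{1}{\Delta s}\bigl|\hat f_{i+\frac12}^{k}-\hat f_{i-\frac12}^{k}\bigr| + \mu_i^k p_i^k + {\sum_{j=0}^{N}}^{\bigstar}\beta_{i,j}^k p_j^k\,\Delta s$.
Multiplying by $\Delta s$ and summing over $i=1,\dots,N$, the last two groups of terms are immediately controlled: by (H2) and Lemma~\ref{lem_L1_bnd_1}, $\sum_{i=1}^{N}\mu_i^k p_i^k\,\Delta s \le c\|p^k\|_1 \le cM_5$; and by (H3) together with $Q^k={\sum_{j=0}^{N}}^{\bigstar}p_j^k\Delta s\le M_5$, one gets $\sum_{i=1}^{N}\bigl({\sum_{j=0}^{N}}^{\bigstar}\beta_{i,j}^k p_j^k\,\Delta s\bigr)\Delta s \le c\,M_5\sum_{i=1}^{N}\Delta s = cM_5$.

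The essential point — and the only nonroutine step — is bounding $\sum_{i=1}^{N}\bigl|\hat f_{i+\frac12}^{k}-\hat f_{i-\frac12}^{k}\bigr|$, which is a discrete total variation of the numerical flux rather than a telescoping sum. Here I would use the $B_i^k,D_i^k$ representation: comparing~(\ref{eql_p_i^k+1}) with~(\ref{equ:scheme_compact}) yields $\hat f_{i+\frac12}^{k}-\hat f_{i-\frac12}^{k} = B_i^k\bigl(p_i^k-p_{i-1}^k\bigr) + D_i^k p_{i-1}^k$ for $i=1,\dots,N$, hence
\begin{equation*}
\sum_{i=1}^{N}\bigl|\hat f_{i+\frac12}^{k}-\hat f_{i-\frac12}^{k}\bigr| \le \Bigl(\max_{i}|B_i^k|\Bigr)\sum_{i=1}^{N}|p_i^k-p_{i-1}^k| + \Bigl(\max_{i}|D_i^k|\Bigr)\sum_{i=1}^{N}p_{i-1}^k .
\end{equation*}
By~(\ref{BD}) we have $|B_i^k|\le\tfrac32 c$, while (H1) forces $|D_i^k|\le\tfrac32 c\,\Delta s$ since each $\Delta_\pm\gamma_i^k$ equals $\gamma_s(\hat s,Q^k)\Delta s$ and is therefore $O(\Delta s)$, uniformly in the boundary index cases $i=1,2,N-1,N$. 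The first sum is $TV(p^k)\le M_7$ by Lemma~\ref{lem_L1_bnd_3}, and $\Delta s\sum_{i=1}^{N}p_{i-1}^k=\Delta s\sum_{i=0}^{N-1}p_i^k\le\|p^k\|_1\le M_5$ because $p_0^k=0$. Thus $\sum_{i=1}^{N}\bigl|\hat f_{i+\frac12}^{k}-\hat f_{i-\frac12}^{k}\bigr|\le\tfrac32 c(M_5+M_7)$, independently of $k$, $\Delta s$, $\Delta t$.

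Collecting the three estimates gives the desired one-step bound with, for instance, $M_8=\tfrac32 c(M_5+M_7)+2cM_5$, and the telescoping argument above completes the proof. I expect the only care needed is in the flux-difference step — verifying the uniform bounds $|B_i^k|\le\tfrac32 c$ and $|D_i^k|=O(\Delta s)$ across the special index ranges in the definitions of $B_i^k$ and $D_i^k$ — but these are precisely the estimates already recorded in~(\ref{BD}) and exploited in Lemma~\ref{lem_L1_bnd_3}, so no genuinely new difficulty arises.
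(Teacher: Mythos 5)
Your proof is correct and follows essentially the same route as the paper: a uniform one-step bound obtained from the compact form with $B_i^k$ and $D_i^k$ (your flux-difference identity $\hat f_{i+\frac12}^k-\hat f_{i-\frac12}^k=B_i^k(p_i^k-p_{i-1}^k)+D_i^kp_{i-1}^k$ is just that form rewritten), using $|B_i^k|\le\frac32 c$, $|D_i^k|\le\frac32 c\,\Delta s$, Lemmas~\ref{lem_L1_bnd_1} and~\ref{lem_L1_bnd_3}, and then the telescoping sum. No substantive difference from the paper's argument.
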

\begin{proof}
From~(\ref{equ:scheme_compact}) and assumptions (H1)-(H3), we have
\begin{equation}
\begin{array}{ll}
\sum_{i=1}^{N}\left| \frac{p_i^{k+1}-p_i^{k}}{\Delta t}\right| \Delta s
&= \sum_{i=1}^{N}\vert -B_i^k p_i^k-\mu _i^k p_i^k \Delta s+B_i^kp_{i-1}^k-D_i^kp_{i-1}^k+{\sum _{j=0}^N}^{\bigstar}
\beta _{i,j}^{k} p_j^k\Delta s \Delta s \vert\\
&\leq  \frac{3}{2}\sup_{\mathbb{D}_4} \gamma\sum_{i=1}^{N}\vert p_i^k-p_{i-1}^k\vert + \sup_{\mathbb{D}_4} \mu \|
p^k\|_1 +\sum_{i=1}^{N}\frac{3}{2}\sup_{\mathbb{D}_4}|\gamma_{i+1}^k-\gamma_i^k|p_{i-1}^k\\
&\quad + \sum_{i=1}^{N}{\sum _{j=0}^N}^{\bigstar}\sup_{\mathbb{D}_4}|\beta| p_j^k\Delta s \Delta s\\
&\leq  \frac{3}{2} c TV(p^k) +c\| p^k\| _1 +\frac{3}{2}c\| p^k\| _1+c\| p^k\| _1.
\end{array}
\end{equation}
Thus by Lemmas~\ref{lem_L1_bnd_1} and~\ref{lem_L1_bnd_3} there exists a positive constant $M_8$ such that
 $$\sum_{i=1}^{N}\left|\frac{p_i^{k+1}-p_i^{k}}{\Delta t}\right| \Delta s\leq M_8.$$
 Therefore
\begin{eqnarray*}
\sum_{i=1}^{N} \left| \frac{p_i^m-p_i^n}{\Delta t}\right| \Delta s\leq\sum_{i=1}^N\sum_{k=n}^{m-1}\left|
\frac{p_i^{k+1}-p_i^{k}}{\Delta t}\right|\Delta s\leq M_8 (m-n).
\end{eqnarray*}
\end{proof}
\subsection{\large Convergence of the difference approximations}\label{}\medskip
 We again follow similar notation as in \cite{J_Smoller} and define a set of functions $\lbrace P_{\Delta s,\Delta
 t}\rbrace$ by $ \lbrace P_{\Delta s,\Delta t}(s,t)\}=p_i^k$ for $s\in [s_{i-1},s_i), t\in [t_{k-1},t_k)$,
 $i=1,2,\cdots,N$, and $k=1,2,\cdots,L$. Then by the Lemmas~\ref{lem_L1_bnd_1} - ~\ref{lem_L1_bnd_5}, the set of
 functions $\lbrace P_{\Delta s,\Delta t}\rbrace$ is compact in the topology of $\mathcal{L}^1((0,1)\times (0,T))$.
 Hence following the proof of Lemma $16.7$ on page $276$ in~\cite{J_Smoller} we obtain the following result.
\begin{thm}\label{theorem U_convergence}
There exists a subsequence of functions $\lbrace P_{\Delta s_r,\Delta t_r}\rbrace \subset \lbrace P_{\Delta s,\Delta
t}\rbrace$ which converges to a function $p \in BV\left( [0,1] \times [0,T] \right) $ in the sense that for all $t>0$,
$$\int _0^1 \vert P_{\Delta s_r,\Delta t_r}-p(s,t)\vert ds\longrightarrow 0,$$
$$\int _0^T \int_0^1 \vert P_{\Delta s_r,\Delta t_r}-p(s,t)\vert ds dt \longrightarrow 0$$
as $r \rightarrow \infty $ (i.e., $\Delta a_r, \Delta s _r, \Delta t_r \rightarrow 0$). Furthermore, there exist
constants $M_{9}$ depending on $\Vert p^0 \Vert_{BV\left( [0,1]\times [0,T] \right)}$ such that the limit function
satisfies
$$ \| p \|_{BV\left( [0,1] \times [0,T] \right)}\leq M_{9}.$$
\end{thm}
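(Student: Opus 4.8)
The plan is to obtain this result from the a priori estimates of Lemmas~\ref{lem_L1_bnd_1}--\ref{lem_L1_bnd_5} by exactly the same compactness argument already invoked for the first order scheme in Theorem~\ref{theorem U_convergence1}, i.e.\ the argument of Lemma~16.7 in~\cite{J_Smoller}. First I would translate the discrete estimates into statements about the step function $P_{\Delta s,\Delta t}$. Lemma~\ref{lem_L1_bnd_1} gives $\|P_{\Delta s,\Delta t}(\cdot,t)\|_{L^1(0,1)}\le M_5$ for all $t\in[0,T]$; Lemma~\ref{lem_L1_bnd_3} gives that the total variation in $s$ of $P_{\Delta s,\Delta t}(\cdot,t)$ is bounded by $M_7$ for each $t$; and Lemma~\ref{lem_L1_bnd_5}, after multiplication by $\Delta t$ and summation over time levels, gives the Lipschitz-in-time estimate $\|P_{\Delta s,\Delta t}(\cdot,t)-P_{\Delta s,\Delta t}(\cdot,t')\|_{L^1(0,1)}\le M_8|t-t'|+O(\Delta t)$. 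Integrating the spatial variation bound over $[0,T]$ and combining it with the temporal variation bound yields a mesh-independent bound on the full space-time total variation of $P_{\Delta s,\Delta t}$ on $[0,1]\times[0,T]$.

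Second, these bounds control $L^1$-translates: the spatial variation bound gives $\int_0^T\!\int|P_{\Delta s,\Delta t}(s+h,t)-P_{\Delta s,\Delta t}(s,t)|\,ds\,dt\le M_7 T|h|$, while the time-Lipschitz estimate gives $\int_0^T\!\int|P_{\Delta s,\Delta t}(s,t+\tau)-P_{\Delta s,\Delta t}(s,t)|\,ds\,dt\le (M_8+1)|\tau|$ once $\Delta t$ is small. Together with the uniform $L^1$ bound, the Fr\'echet--Kolmogorov (Riesz) compactness criterion shows that $\{P_{\Delta s,\Delta t}\}$ is precompact in $L^1((0,1)\times(0,T))$; hence some subsequence $\{P_{\Delta s_r,\Delta t_r}\}$ converges in $L^1((0,1)\times(0,T))$ to a limit $p$, which is the second displayed convergence. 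To upgrade to the pointwise-in-time convergence $\int_0^1|P_{\Delta s_r,\Delta t_r}-p(s,t)|\,ds\to0$ for every $t$, I would use that by Lemma~\ref{lem_L1_bnd_5} the maps $t\mapsto P_{\Delta s_r,\Delta t_r}(\cdot,t)\in L^1(0,1)$ are uniformly Lipschitz up to an $O(\Delta t_r)$ error: extracting along a countable dense set of times by a diagonal argument and using this equicontinuity, the $L^1$-in-time convergence is promoted to convergence at each fixed $t$, and $t\mapsto p(\cdot,t)$ admits a Lipschitz $L^1(0,1)$-valued representative. Finally, lower semicontinuity of total variation under $L^1$ convergence carries the mesh-independent space-time BV bound over to the limit, so $p\in BV([0,1]\times[0,T])$ with $\|p\|_{BV([0,1]\times[0,T])}\le M_9$, where $M_9$ depends on $\|p^0\|_{BV([0,1]\times[0,T])}$ only through $M_5$, $M_7$, $M_8$.

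The hard part is essentially the bookkeeping involved in turning the discrete sums of Lemmas~\ref{lem_L1_bnd_1}--\ref{lem_L1_bnd_5} into the continuous translation estimates uniformly in the mesh — in particular handling the boundary strips $[0,\Delta s)$ and $[0,\Delta t)$ on which the step function is defined from the $i\ge1$, $k\ge1$ nodal values, and checking that the $O(\Delta t)$ error in the time-Lipschitz estimate is genuinely uniform — together with the mildly delicate promotion of $L^1((0,1)\times(0,T))$ convergence to the ``for all $t$'' statement. Since these are precisely the steps carried out in Lemma~16.7 of~\cite{J_Smoller} and already used to prove Theorem~\ref{theorem U_convergence1}, I would invoke that argument rather than reproduce it in full, merely noting that the estimates needed as input are supplied here by Lemmas~\ref{lem_L1_bnd_1}, \ref{lem_L1_bnd_2}, \ref{lem_L1_bnd_3} and~\ref{lem_L1_bnd_5}.
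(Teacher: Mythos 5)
Your proposal is correct and follows essentially the same route as the paper: the paper likewise notes that Lemmas~\ref{lem_L1_bnd_1}--\ref{lem_L1_bnd_5} make $\{P_{\Delta s,\Delta t}\}$ compact in $L^1((0,1)\times(0,T))$ and then simply invokes the argument of Lemma~16.7 in~\cite{J_Smoller}, exactly as you do. Your sketch of the translation estimates and the promotion to pointwise-in-time convergence supplies more detail than the paper itself, but it is the same argument.
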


We show in the next theorem that the limit function $p(s,t)$ constructed by the finite difference scheme is a weak
solution to problem~(\ref{model}).
\begin{thm}
The limit function $p(s,t)$ defined in Theorem~\ref{theorem U_convergence} is a weak solution of the DSSM. Moreover, it
satisfies
$$\Vert p \Vert _{L^\infty\left( (0,1) \times (0,T) \right)}\leq \exp\left(\frac{5}{2}cT\right) \Vert p^{0} \Vert
_\infty.$$
\end{thm}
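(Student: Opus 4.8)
The plan is to show that the limit function $p(s,t)$ from Theorem~\ref{theorem U_convergence} satisfies the weak formulation~\eqref{weak solution of DSSM}, following the standard Lax-Wendroff-type consistency argument (cf.\ Lemma 16.9 in~\cite{J_Smoller} and the analogous first-order result Theorem~\ref{thm_uniqueness}). First I would fix a test function $\phi \in C^1([0,1]\times[0,T])$, set $\phi_i^k = \phi(s_i,t_k)$, multiply the scheme~\eqref{equ:scheme_compact} (equivalently~\eqref{scheme_SOE}) by $\phi_i^k \Delta s$, and sum over $i=1,\dots,N$ and $k=0,\dots,\ell-1$ where $t_\ell \approx t$. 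Summation by parts in both $i$ and $k$ transfers the difference operators onto $\phi$, producing discrete analogues of $\phi_\tau$, $\gamma \phi_s$, $-\mu\phi$, and the birth integral, plus boundary terms. The boundary terms at $s=0$ vanish since $p_0^k=0$ and $\gamma_0^k p_0^k = 0$; the terms at $s=1$ vanish because $\gamma(1,Q)=0$ by (H1), so $\gamma_N^k p_N^k = 0$.

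The core estimates are the consistency bounds showing that the discrete flux $\hat f_{i+1/2}^k$ differs from $\gamma_i^k p_i^k$ by $O(\Delta s)$ uniformly (indeed $|\hat f_{i+1/2}^k - \gamma_i^k p_i^k| \le \tfrac12|\gamma_{i+1}^k-\gamma_i^k|\,p_i^k + \tfrac12\gamma_i^k|mm(\Delta_+ p_i^k,\Delta_- p_i^k)| \le C\Delta s\,p_i^k + C\,|mm(\cdots)|$), and that the Trapezoidal-rule quadrature for $Q^k$ and for the birth term differs from the corresponding Riemann sums by amounts controlled by $\Delta s \cdot TV$ and $\Delta s\|p^k\|_\infty$. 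Using the uniform bounds $\|p^k\|_1 \le M_5$, $\|p^k\|_\infty \le M_6$, $TV(p^k)\le M_7$ from Lemmas~\ref{lem_L1_bnd_1}--\ref{lem_L1_bnd_3}, together with the $L^1$-in-time Lipschitz bound of Lemma~\ref{lem_L1_bnd_5} and the smoothness of $\phi$, $\gamma$, $\mu$, $\beta$, one sees that all the error terms introduced by replacing difference quotients with derivatives, by replacing the limiter flux with $\gamma p$, and by replacing the quadratures with integrals, tend to $0$ as $\Delta s,\Delta t \to 0$ along the subsequence $\{P_{\Delta s_r,\Delta t_r}\}$. The $L^1$ convergence $P_{\Delta s_r,\Delta t_r}\to p$ asserted in Theorem~\ref{theorem U_convergence}, combined with uniform boundedness (hence dominated convergence) and the continuity of the coefficient functions in $Q$ (so that $Q^k \to Q(\tau)=\int_0^1 p(s,\tau)\,ds$ and $\gamma_i^k \to \gamma(s,Q(\tau))$ etc.), lets one pass to the limit in each surviving term, recovering exactly the identity~\eqref{weak solution of DSSM}.

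Finally, the $L^\infty$ bound on the limit is obtained directly by passing to the limit in the estimate established inside the proof of Lemma~\ref{lem_L1_bnd_2}, namely $\|p^{k+1}\|_\infty \le (1+\tfrac52 c\Delta t)^{k+1}\|p^0\|_\infty$; since $k\Delta t \le T$, this gives $\|p^k\|_\infty \le \exp(\tfrac52 cT)\|p^0\|_\infty$ for all $k$, and the pointwise (a.e.) convergence of a further subsequence of $P_{\Delta s_r,\Delta t_r}$ to $p$ yields $\|p\|_{L^\infty((0,1)\times(0,T))} \le \exp(\tfrac52 cT)\|p^0\|_\infty$.

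I expect the main obstacle to be the careful bookkeeping of the limiter terms near the grid boundaries: the flux $\hat f_{i+1/2}^k$ has different definitions for $i=0,1,N-1,N$ versus the interior, and the quantities $B_i^k$, $D_i^k$ likewise have special cases at $i=1,2,N-1,N$. One must verify that these finitely many ``edge'' contributions, each bounded by $C\Delta s\,(\|p^k\|_\infty + TV(p^k))$ after summing the geometric factor over $k$, do not spoil consistency — this is routine but must be done with care. A secondary subtlety is that the minmod terms $mm(\Delta_+ p_i^k,\Delta_- p_i^k)$ are not individually $o(1)$, but their total contribution over $i$ is bounded by $TV(p^k)$ and is multiplied by an extra factor $\Delta s$ coming from the flux difference telescoping against the smooth $\phi_s$, so it still vanishes in the limit.
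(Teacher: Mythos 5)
Your proposal is correct and follows the same overall skeleton as the paper: multiply the scheme by a test function, sum by parts in $i$ and $k$, kill the boundary terms via $p_0^k=0$ and $\gamma(1,Q)=0$, invoke the uniform $\ell^1$, $\ell^\infty$, TV and time-Lipschitz bounds of Lemmas~\ref{lem_L1_bnd_1}--\ref{lem_L1_bnd_5} together with the $L^1$ convergence of Theorem~\ref{theorem U_convergence}, and conclude \`a la Lemma 16.9 of \cite{J_Smoller}; the $L^\infty$ bound is obtained exactly as in the paper, by passing to the limit in the estimate $\Vert p^{k}\Vert_\infty\le(1+\tfrac52 c\Delta t)^{k}\Vert p^0\Vert_\infty$ from Lemma~\ref{lem_L1_bnd_2}. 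Where you genuinely diverge is in the treatment of the limiter flux. The paper keeps $\hat f_{i+1/2}^k$ exact and partitions the interior indices into the sets $J_1,J_2,J_3$ according to the sign pattern of $\Delta_+p_i^k$ and $\Delta_-p_i^k$, on which the minmod reduces respectively to $0$, $\Delta_+p_i^k$, $\Delta_-p_i^k$; it then passes to the limit in each piece (plus the edge strips near $s=0,1$), and the resulting integrals over the $J_1,J_2,J_3$ regions recombine into $\int_0^t\int_0^1\gamma\,p\,\phi_s$. You instead treat $\hat f_{i+1/2}^k$ as the first-order flux $\gamma_i^k p_i^k$ plus a consistency error bounded by $\tfrac12|\gamma_{i+1}^k-\gamma_i^k|p_i^k+\tfrac12\gamma_i^k|mm(\Delta_+p_i^k,\Delta_-p_i^k)|$, whose accumulated contribution against the bounded difference quotients of $\phi$ is $O(\Delta s)\,T\,(M_5+M_7)$ and hence vanishes — a correct observation, since $\sum_i|mm(\Delta_+p_i^k,\Delta_-p_i^k)|\le TV(p^k)$. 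Your route is shorter and makes transparent that only the TV bound is needed to neutralize the second-order correction, at the price of an inequality-based argument; the paper's case decomposition keeps exact discrete identities up to the limit and mirrors the bookkeeping already used in the TV estimate, but requires tracking the three regimes and the special edge formulas explicitly. (Minor remarks: at $i=0,1,N-1,N$ the flux is exactly $\gamma_i^kp_i^k$, so the edge terms you flag as a potential obstacle contribute no consistency error at all; and multiplying by $\phi_i^k$ rather than the paper's $\phi_i^{k+1}$ only introduces an $O(\Delta t)$ discrepancy, which is harmless.)
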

\begin{proof}
Let $\phi \in C^1\left( [0,1] \times [0,T] \right)$ and denote the value of $ \phi (s_i,t_k)$ by $\phi_i^k$.
\noindent
Multiplying equation~(\ref{eql_p_i^k+1}) by $\phi _i^{k+1}$ and rearranging some terms we have
\begin{equation}\label{p_iphi_i}
\begin{array}{ll}
p_i^{k+1}\phi _i^{k+1} -p_i^k \phi_i^{k} &=p_i^k (\phi_ i^{k+1}-\phi_ i^k)+\frac{\Delta t}{\Delta
s}[\hat{f}_{i-\frac{1}{2}}^k(\phi_i^{k+1}-\phi_{i-1}^{k+1}) +(\hat{f}_{i-\frac{1}{2}}^k \phi_
{i-1}^{k+1}-\hat{f}_{i+\frac{1}{2}}^k \phi _i^{k+1})]\\
&\quad - \mu _i^k p_i^k \phi_ i^{k+1}\Delta t + {\sum_{j=0}^N}^{\bigstar} \beta _{i,j}^k p_j^k  \phi_i^{k+1}\Delta s
\Delta t.
\end{array}
\end{equation}
\noindent
Multiplying the above equation by $\Delta s$, summing over $ i=1,2,\cdots,N$, $k=0, 1,\cdots, L-1$, and applying
$p_0^k=0$ and $\gamma_N^k=0$ we obtain,
\begin{equation}\label{weak_solution_sum}
\begin{array}{ll}
\sum_{i=1}^N\left( p_i^L \phi_ i^L- p_i^0\phi_ i^0\right) \Delta s &= \sum_{k=0}^{L-1} \sum_{i=1}^N p_i^k \frac{\phi
_i^{k+1}-\phi_ i^k}{\Delta t} \Delta s \Delta t\\
&\quad+\sum_{k=0}^{L-1} \sum_{i=0}^{N-1}\hat{f}_{i+\frac{1}{2}}^k\frac{\phi _{i+1}^{k+1}-\phi_ {i}^{k+1}}{\Delta
s}\Delta s \Delta t-\sum_{k=0}^{L-1} \sum_{i=1}^N \mu_i^k p_i^k \phi _i^{k+1}\Delta s \Delta t\\
&\quad+\sum_ {k=1}^{L-1} \sum_{i=1}^{N} {\sum_{j=1}^N}^{\bigstar} \beta _{i,j}^k p_j^k \phi_ i^{k+1} \Delta s \Delta t
\Delta s.
\end{array}
\end{equation}\label{sump_iphi_i}
\noindent
Note that by~(\ref{eq:fhat}) one have
\begin{equation}
\begin{array}{ll}
\sum_{k=0}^{L-1} \sum_{i=0}^{N-1}\hat{f}_{i+\frac{1}{2}}^k\frac{\phi _{i+1}^{k+1}-\phi_ {i}^{k+1}}{\Delta s}\Delta s
\Delta t\\
=\sum_{k=0}^{L-1} [\gamma_{0}^{k}p_{0}^k+\gamma_{1}^{k}p_{1}^k+\gamma_{N-1}^{k}p_{N-1}^k+\sum_{i\in
{J_1}}\frac{\gamma_{i}^k+\gamma_{i+1}^k}{2}p_i^k
+\sum_{i\in J_2}\frac{\gamma_{i+1}^kp_i^k+\gamma_{i}^kp_{i+1}^k}{2}\\
\quad+\sum_{i\in J_3}\frac{\gamma_{i+1}^k p_i^k+2\gamma_{i}^kp_{i}^k-\gamma_{i}^k p_{i-1}^k}{2}]\frac{\phi
_{i+1}^{k+1}-\phi_ {i}^{k+1}}{\Delta s}\Delta s \Delta t,
\end{array}
\end{equation}
where $J_1=\{2\leq i\leq N-2: \text{sign}(\Delta_{+}p_{i}^{k})\text{sign}(\Delta_{-} p_{i}^{k})=-1, \;\text{or}\;
\text{sign}(\Delta_{+}p_{i}^{k})\text{sign}(\Delta_{-} p_{i}^{k})=0\}$, $J_2=\{2\leq i\leq N-2: \Delta_{-} p_{i}^k\geq
\Delta_{+} p_{i}^k>0, \; \text{or}\; \Delta_{-} p_{i}^k\leq\Delta_{+} p_{i}^k < 0\}$, $J_3=\{2\leq i\leq N-2:
\Delta_{+} p_{i}^k>\Delta_{-} p_{i}^k>0,\; \text{or}\; \Delta_{+} p_{i}^k<\Delta_{-} p_{i}^k< 0\}$. One can easily
check that
$J_1\cup J_2\cup J_3=\{2,3,\cdots,N-3,N-2\}$.
Now we could rewrite (\ref{weak_solution_sum}) as
\begin{equation}
\begin{array}{ll}
\sum_{i=1}^N\left( p_i^L \phi_ i^L- p_i^0\phi_ i^0\right) \Delta s 
&= \sum_{k=0}^{L-1} \sum_{i=1}^N p_i^k \frac{\phi_i^{k+1}-\phi_ i^k}{\Delta t} \Delta s \Delta t
+ \sum_{k=0}^{L-1} [\gamma_{0}^{k}p_{0}^k+\gamma_{1}^{k}p_{1}^k+\gamma_{N-1}^{k}p_{N-1}^k\\
&\quad +\sum_{i\in{J_1}}\frac{\gamma_{i}^k+\gamma_{i+1}^k}{2}p_i^k
+\sum_{i\in J_2}\frac{\gamma_{i+1}^kp_i^k+\gamma_{i}^kp_{i+1}^k}{2}\\
&\quad+\sum_{i\in J_3}\frac{\gamma_{i+1}^k p_i^k+2\gamma_{i}^kp_{i}^k-\gamma_{i}^k p_{i-1}^k}{2}]\frac{\phi _{i+1}^{k+1}-\phi_ {i}^{k+1}}{\Delta s}\Delta s \Delta t\\
&\quad-\sum_{k=0}^{L-1} \sum_{i=1}^N \mu_i^k p_i^k \phi _i^{k+1}\Delta s \Delta t
+\sum_ {k=1}^{L-1} \sum_{i=1}^{N} {\sum_{j=1}^N}^{\bigstar} \beta _{i,j}^k p_j^k \phi_ i^{k+1} \Delta s \Delta t\Delta s.
\end{array}
\end{equation}
Since $p_i^k$ is piecewise constant and $\phi$ is smooth, and the integrals are limits of step functions, we have
\begin{equation}\label{convergence_final}
\begin{array}{ll}
\int_0^1 P_{\Delta s,\Delta t} (s,t) \phi(s,t) ds + \delta_1 - \int_{0}^1 P_{\Delta s,\Delta t}(s,0) \phi(s,0)ds+ \delta_2\\
= \int_0^t\int_0^1 P_{\Delta s,\Delta t}(s,\tau) \phi _\tau(s,\tau) ds d\tau +\delta_3
+ \int_0^t \left \{\int_0^{\Delta s}\gamma(s,Q(\tau)) P_{\Delta s,\Delta t}(s,\tau)\phi_s(s,\tau) ds\right .\\
\quad+\int_{1-\Delta s}^1 \gamma(s,Q(\tau)) P_{\Delta s,\Delta t}(s,\tau)\phi_s(s,\tau) ds
\left .
+\int_{J_1} \gamma(s,Q(\tau)) P_{\Delta s,\Delta t}(s,\tau)\phi_s(s,\tau) ds\right . \\
\left .
\quad+ \int_{J_2} \gamma(s,Q(\tau)) P_{\Delta s,\Delta t}(s,\tau)\phi_s(s,\tau) ds
 +\int_{J_3} \gamma(s,Q(\tau)) P_{\Delta s,\Delta t}(s,\tau)\phi_s(s,\tau) ds \right \} d\tau+\delta_4\\
 \quad-\int_0^t \int_0^1 P_{\Delta s,\Delta t}(s,\tau) \mu(s,Q(\tau)) \phi(s,\tau) ds d\tau +\delta_5 \\
\quad +\int_0^t \int_0^1 \phi(s,\tau)  \int_0^1 P_{\Delta s,\Delta t}(s,\tau)  \beta(s,y,Q(\tau))dy ds d\tau + \delta_6.
\end{array}
\end{equation}
$\delta_i \rightarrow 0$, $i=1,2,\cdots,6$, as $\Delta s, \Delta t \rightarrow 0$ and by the choice of the initial values $\int_{0}^1 P_{\Delta s,\Delta t}(s,0) \phi(s,0)ds\rightarrow \int_0^1 p^0(s) \phi ds$ as $\Delta s \rightarrow 0$. By Theorem~\ref{theorem U_convergence} $\int_0^1 | P_{\Delta s, \Delta t} -p(s,t)|ds \rightarrow 0$ and $\int_0^t \int_0^1 |P_{\Delta s,\Delta t}-p(s,t)| ds dt \rightarrow 0$ as $\Delta s, \Delta t \rightarrow 0 $. Combining the above fact and~(\ref{convergence_final}) and following a similar argument used in the proof of Lemma (16.9) on page 280 of~\cite{J_Smoller}, we can show that the limit of the difference approximations in Theorem~\ref{theorem U_convergence} is a weak solution to problem~(\ref{model}). The bound on $\Vert p \Vert _{L^\infty}$ is obtained by taking the limit in the bounds of the difference approximation in Lemma~\ref{lem_L1_bnd_2}.
\end{proof}

\section{Weak* connection between CSSM and DSSM}
The aim of this section is to establish a relationship between solutions of the single state-at-birth  model CSSM and the
distributed states-at-birth model DSSM. In particular we show that if the
distribution of new recruits in the DSSM becomes concentrated at the left-boundary ($s=0$), then solutions of the DSSM
converge to solutions of the CSSM in the weak* topology. To this end, we have the following theorem

\begin{thm}\label{weak*}
Let $\{\beta_n(s,y,Q)\}_{n\geq 1}$ be a sequence of reproductive functions of DSSM. Assume
$\beta_n(s,y,Q)=\beta_{1,n}(s)\beta_2(y,Q)$ such that
\begin{itemize}
\item[(A)] $\beta_2\in C^{1}([0,1]\times [0,\infty))$ and $ 0\leq \beta_2(y,Q)\leq c $.
\item[(B)] $\beta_{1,n}\in C^{1}([0,1])$ and $\int_0^1 \beta_{1,n}(s)ds =1$ for each $n\geq 1$.
\item[(C)] For every test function $\xi \in C[0,1] $,
$ \int_0^1 \beta_{1,n}(s)\xi(s)ds \to \xi(0)$, as $n \to \infty$.
\end{itemize}
Then the weak solution $p_n$ of DSSM (\ref{model}) corresponding to $\beta_n$ converges to the weak solution, $\hat p$,
of CSSM \eqref{model_McKendrick} in the weak* topology, i.e., as $ n \to \infty$,  $\int_0^1 p_{n}(s, t)\eta(s)ds \to
\int_0^1 \hat{p}(s, t)\eta(s) ds$ for every $\eta\in C[0,1]$.
\end{thm}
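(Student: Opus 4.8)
The plan is to combine the uniform estimates available for the DSSM with a weak$^*$ compactness argument, pass to the limit in the weak formulation~\eqref{weak solution of DSSM}, and then identify the limit by the uniqueness of weak solutions of the CSSM. For each fixed $n$ the function $\beta_n$ is bounded and Lipschitz, so $p_n$ exists, is unique and has bounded total variation by Theorems~\ref{thm_uniqueness}--\ref{thm:uniqueness} (with $c$ replaced by an $n$-dependent constant). The $L^\infty$ and $TV$ bounds of Lemmas~\ref{lem_FO2}--\ref{lem_FO3} degenerate as $\|\beta_{1,n}\|_\infty\to\infty$, so no uniform $L^\infty$ or $TV$ bound is available; however the aggregate recruitment rate is controlled uniformly, $\int_0^1\beta_n(s,y,Q)\,ds=\beta_2(y,Q)\le c$, and choosing $\phi\equiv 1$ in~\eqref{weak solution of DSSM} together with Gronwall's inequality yields $\|p_n(\cdot,t)\|_1\le e^{cT}\|p^0\|_1=:M_1$ and $|Q_n(t)-Q_n(t')|\le 2cM_1|t-t'|$, both uniform in $n$. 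By Arzel\`a--Ascoli, along a subsequence (not relabeled) $Q_n\to Q_*$ uniformly on $[0,T]$ with $Q_*$ Lipschitz. Moreover, inserting a time-independent $\phi=\eta\in C^1[0,1]$ in~\eqref{weak solution of DSSM} shows that $t\mapsto\int_0^1 p_n(s,t)\eta(s)\,ds$ is Lipschitz with a constant depending only on $\|\eta\|_{C^1}$, $c$ and $M_1$; combined with the uniform $L^1$ bound and a diagonal argument over a countable $C^1$-family dense in $C[0,1]$, we obtain (after a further subsequence) a weak$^*$-continuous family of nonnegative measures $(\lambda_t)_{t\in[0,T]}$ on $[0,1]$, with $\lambda_t([0,1])=Q_*(t)\le M_1$, such that $\int_0^1 p_n(s,t)\eta(s)\,ds\to\int_0^1\eta\,d\lambda_t$ for every $t\in[0,T]$ and every $\eta\in C[0,1]$.

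Next I would pass to the limit in~\eqref{weak solution of DSSM} written with $\beta=\beta_n$. In the term $\int_0^t\!\int_0^1 p_n[\phi_\tau+\gamma(\cdot,Q_n)\phi_s-\mu(\cdot,Q_n)\phi]\,ds\,d\tau$ one first replaces $Q_n$ by $Q_*$: by (H1)--(H2), $\|Q_n-Q_*\|_\infty\to0$ and $\|p_n(\cdot,\tau)\|_1\le M_1$, the incurred error is $O(\|Q_n-Q_*\|_\infty)$; for the remaining integrand, which for each $\tau$ is a fixed continuous function of $s$, the slicewise convergence of $p_n(\cdot,\tau)$ and the dominated convergence theorem in $\tau$ give the limit $\int_0^t\!\int_0^1[\phi_\tau+\gamma(\cdot,Q_*)\phi_s-\mu(\cdot,Q_*)\phi]\,d\lambda_\tau\,d\tau$. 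Using $\beta_n(s,y,Q)=\beta_{1,n}(s)\beta_2(y,Q)$ and Fubini, the recruitment term equals
\[
\int_0^t\Big(\int_0^1\beta_{1,n}(s)\phi(s,\tau)\,ds\Big)\Big(\int_0^1\beta_2(y,Q_n(\tau))\,p_n(y,\tau)\,dy\Big)\,d\tau .
\]
By hypothesis (C) the first factor tends to $\phi(0,\tau)$ for each $\tau$ and is dominated by $\|\phi\|_\infty$ (using (B) and $\beta_{1,n}\ge0$); using $\|Q_n-Q_*\|_\infty\to0$, hypothesis (A) and the slicewise weak$^*$ convergence, the second factor tends to $\int_0^1\beta_2(y,Q_*(\tau))\,d\lambda_\tau(y)$ and stays bounded by $cM_1$. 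Hence, by dominated convergence, the recruitment term tends to $\int_0^t\phi(0,\tau)\big(\int_0^1\beta_2(y,Q_*(\tau))\,d\lambda_\tau(y)\big)\,d\tau$. Passing to the limit on the left-hand side as well, $(\lambda_t)$ satisfies, for every $\phi\in C^1([0,1]\times[0,T])$ and $t\in[0,T]$,
\[
\int_0^1\phi(\cdot,t)\,d\lambda_t-\int_0^1 p^0\,\phi(\cdot,0)\,ds=\int_0^t\!\!\int_0^1\big[\phi_\tau+\gamma(\cdot,Q_*)\phi_s-\mu(\cdot,Q_*)\phi\big]\,d\lambda_\tau\,d\tau+\int_0^t\phi(0,\tau)\!\int_0^1\beta_2(y,Q_*(\tau))\,d\lambda_\tau(y)\,d\tau ,
\]
with $Q_*(\tau)=\lambda_\tau([0,1])$; this is exactly the measure-valued weak formulation of the CSSM~\eqref{model_McKendrick} with $\tilde\beta=\beta_2$.

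It then remains to identify $(\lambda_t)$. The CSSM~\eqref{model_McKendrick} is a first-order hyperbolic problem with a local boundary condition, hence by the corresponding well-posedness theory (cf.~\cite{XM} and the argument in the proof of Theorem~\ref{thm:uniqueness}) it has a unique weak solution $\hat p\in BV([0,1]\times[0,T])$; the Gronwall/contraction argument of Theorem~\ref{thm:uniqueness} carries over to measure-valued data — one fixes $Q$, for which the linear transport problem with boundary influx has a unique measure solution, and then closes the nonlinear dependence on $Q(\tau)=\lambda_\tau([0,1])$ by Gronwall's inequality. Since $\hat p(\cdot,t)\,ds$ solves the same measure-valued formulation, uniqueness forces $\lambda_t=\hat p(\cdot,t)\,ds$ for every $t$. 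As the limit is independent of the extracted subsequence, the full sequence converges, i.e. $\int_0^1 p_n(s,t)\eta(s)\,ds\to\int_0^1\hat p(s,t)\eta(s)\,ds$ for every $\eta\in C[0,1]$ and every $t\in[0,T]$, which is the assertion.

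The hard part is conceptual rather than computational: because the concentration of the recruitment near $s=0$ forces $\|p_n\|_\infty$ and $TV(p_n)$ to grow with $n$, one genuinely has to pass to measure-valued limits, and the two delicate points are (i) producing a weak$^*$ limit valid \emph{simultaneously} for all $t$ — which is why one reads the equi-Lipschitz-in-$t$ dependence of $t\mapsto\int_0^1 p_n(s,t)\eta\,ds$ off the weak formulation rather than relying on a pointwise-in-$t$ bound — and (ii) upgrading the uniqueness of weak solutions of the CSSM to the measure-valued level; equivalently, one may instead show that the limit $\lambda_t$ is absolutely continuous, using a test function concentrated near $s=0$ together with the fact that $\gamma(0,\cdot)$ is bounded below by a positive constant, so that recruited mass is transported away from the boundary at a uniformly positive speed. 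By contrast, the mechanism that converts the distributed recruitment into the boundary influx, namely $\int_0^1\beta_{1,n}(s)\phi(s,\tau)\,ds\to\phi(0,\tau)$, is immediate from the factorization of $\beta_n$ and hypothesis (C).
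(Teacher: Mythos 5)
Your proposal follows the same skeleton as the paper's own proof: take $\phi\equiv 1$ in \eqref{weak solution of DSSM}, use $\int_0^1\beta_{1,n}\,ds=1$ and Gronwall to get an $n$-uniform $L^1$ bound, extract a weak$^*$ convergent subsequence, pass to the limit in the weak formulation (hypothesis (C) turning the distributed recruitment into the boundary influx $\phi(0,\tau)\int_0^1\beta_2(y,\hat Q)\hat p\,dy$), and conclude by uniqueness of the CSSM weak solution from \cite{AI_1997}, upgrading to the full sequence. The difference is one of rigor rather than route. The paper asserts directly that $L^1$-boundedness yields a weak$^*$ convergent subsequence $p_{n_i}\to\hat p$ and that $Q_{n_i}\to\hat Q$, implicitly treating the limit as an integrable (indeed BV) function to which the uniqueness theorem of \cite{AI_1997} applies; you correctly observe that the $\ell^\infty$ and TV estimates degenerate as $\beta_{1,n}$ concentrates, so a priori the weak$^*$ limit is only a family of measures, and you supply the missing equicontinuity ingredients (Arzel\`a--Ascoli for $Q_n$, equi-Lipschitz in $t$ of $t\mapsto\int_0^1 p_n(s,t)\eta(s)\,ds$ read off the weak formulation, a diagonal argument) that make the simultaneous-in-$t$ weak$^*$ limit legitimate. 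The price of this extra care is the final identification step: you need uniqueness of the CSSM in a measure-valued class (or absolute continuity of $\lambda_t$), and you only sketch both routes (``the Gronwall/contraction argument carries over'', or transport away from $s=0$ at speed $\gamma(0,\cdot)>0$) rather than prove either. That is the one genuine gap in your write-up, but it is precisely the point the published proof elides by tacitly assuming the weak$^*$ limit is already a function covered by \cite{AI_1997}; so your argument is sound modulo that identification, and to be complete it would need either the measure-level uniqueness carried out in detail or an $n$-uniform BV-type bound ruling out singular limits, neither of which appears in the paper either.
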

\begin{proof}
It can be seen that $\beta_n$ satisfies assumption (H3). Thus for each $\beta_n$, there exists a weak solution $p_n$ of
DSSM~(\ref{model}) which satisfies equation~(\ref{weak solution of DSSM}). We denote the total population by $Q_n(t) =
\int_0^1 p_n(s,t) ds$.
Now, let  $\phi\equiv 1$ in~(\ref{weak solution of DSSM}) and apply property (B) we get
\begin{equation}\label{weak solution of DSSM_phi1}
\begin{array}{ll}
\int _0^1 p_n(s,t)ds-\int _0^1p^0(s)ds\\
=-\int _0^t\int _0^1  p_n(s,\tau)\mu(s,Q_n(\tau))dsd\tau
+ \int _0^t\int _0^1\beta_2(y,Q_n(\tau))p_n(y,\tau)dyd\tau.
\end{array}
\end{equation}
Since  $\mu\geq 0$ and $p_n\geq 0$, By (A) one have
\begin{equation}\label{}
\| p_n(\cdot,t) \|_1 \leq \| p^0\|_1+ c \int_0^t \|p_n(\cdot,\tau)\|_1 d\tau.
\end{equation}
Using Gronwall's inequality we have
\begin{equation}\label{equ:bound}
\| p_n(\cdot,t) \|_1 \leq \exp(ct)\| p^0\|_1\leq \exp(cT)\| p^0\|_1.
\end{equation}
Combining~(\ref{equ:bound}) and assumption (H4) one can easily see that the solutions $p_n$ of DSSM are bounded in
$L^1$ norm uniformly in $n$.
Thus, there exists a subsequence $\{p_{n_{i}}\}$ of $\{p_n\}$ that converges in the weak* topology to $\hat{p}$ as
$n_{i}\to \infty$.
More specifically, for every $\eta \in C[0,1]$, $\int_0^1 p_{n_{i}}(s, t)\eta(s)ds \to \int_0^1 \hat{p}(s, t)\eta(s)
ds$ as $n_{i}\to \infty$.
Letting $\eta\equiv 1$, we get $Q_{n_i}\to \hat{Q}$ as $n_{i}\to \infty$. Since by assumption (H1), $ \gamma(s,Q)$ is
continuously differentiable with respect to $s$ and $Q$, $\gamma(s, Q_{n_i}) \to \gamma(s, \hat{Q})$, as $n_{i}\to
\infty$.
Similarly, applying assumptions (H2) and (A) one gets $\mu(s, Q_{n_i})\to \mu(s, \hat{Q})$ and $\beta_2(s, Q_{n_i})\to
\beta_2(s, \hat{Q})$ as $n_{i}\to \infty$.\\
Now letting $n_i\to \infty$ in equation~(\ref{weak solution of DSSM}) and applying (C) we obtain
\begin{equation}\label{}
\begin{array}{ll}
\int _0^1 \hat{p}(s,t)\phi(s,t)ds-\int _0^1 p^0(s)\phi(s,0)ds\\
=\int _0^t\int _0^1  \hat{p}(s,\tau)[\phi _\tau(s,\tau) +\gamma (s,\hat{Q}(\tau))\phi
_s(s,\tau)-\mu(s,\hat{Q}(\tau))\phi(s,\tau)]dsd\tau \\
\quad + \int _0^t \phi(0,\tau) \int _0^1 \beta_2(y,\hat{Q}(\tau))\hat{p}(y,\tau)dyd\tau,
\end{array}
\end{equation}
for any $\phi \in C^1([0,1] \times [0,T])$. Therefore $\hat{p}$ satisfies equation (1.2) in~\cite{AI_1997} and thus is
a weak solution of the CSSM with initial condition $p^0(s)$ and reproduction function $\beta_2(s,Q)$. Since the weak
solution of CSSM is unique \cite{AI_1997} we get that $p_n \to \hat p$ the unique weak solution of CSSM.
\end{proof}

\section{Numerical simulations and examples}\label{}

In this section we present several numerical simulations to demonstrate the performance of the first order explicit
upwind scheme~(\ref{first order 1}) and the second order explicit scheme~(\ref{scheme_SOE}) developed in the previous
sections. To better demonstrate their capability in solving the DSSM we compare the schemes with another second-order
explicit upwind method~\cite{Patankar}, which is given by:\\
\begin{equation}\label{second order upwind}
\begin{array}{ll}
\frac{p_i^{k+1}-p_i^k}{\Delta t}+\frac{\gamma _i^k p_i^k}{\Delta s} = -\mu _i^kp_i^k+{\sum_{j=0}^{N}}^{\bigstar}
\beta_{i,j}^{k}p_{j}^{k} \Delta s,&   i=1,\\
\frac{p_i^{k+1}-p_i^k}{\Delta t}+\frac{3 \gamma _i^k p_i^k -4\gamma_{i-1}^{k}p_{i-1}^{k}}{\Delta s} = -\mu
_i^kp_i^k+{\sum_{j=0}^{N}}^{\bigstar} \beta_{i,j}^{k}p_{j}^{k} \Delta s, &  i=2,\\
\frac{p_i^{k+1}-p_i^k}{\Delta t}+\frac{3 \gamma _i^k p_i^k -4\gamma_{i-1}^{k}p_{i-1}^{k}+\gamma_{i-2}^k
p_{i-2}^k}{\Delta s} = -\mu _i^kp_i^k+{\sum_{j=0}^{N}}^{\bigstar}\beta_{i,j}^{k}p_{j}^{k} \Delta s, &  3\leq i\leq
N,\quad 0 \leq k \leq L-1,\\
\gamma_ 0^k p_0^k=0, &  0 \leq k \leq L,\\
p_i^0=p^0(s_i), &  0 \leq i \leq N.\\
\end{array}
\end{equation}
Here $Q^k$ is discretized by the same Trapezoidal rule as used in scheme~(\ref{scheme_SOE}).
We also utilize scheme~(\ref{scheme_SOE}) to investigate the connection between the two population models: DSSM and
CSSM. At last we apply the numerical scheme~(\ref{scheme_SOE}) to show supercritical Hopf-bifurcation in a distributed
states-at-birth model.
Throughout this section we use uniformly spaced grid points for both $s$ and $t$. For simplicity of presentation, we
denote the first order explicit upwind scheme~(\ref{first order 1}), the second order explicit
scheme~(\ref{scheme_SOE}) and the second order explicit upwind scheme~(\ref{second order upwind}) by FOEU, SOEM and
SOEU, respectively.

\subsection{Validation of the numerical methods against an exact solution}
This example is merely designed to test the order of accuracy of the schemes for smooth solutions and thus may be
biologically irrelevant. To this end we choose the parameter values such that the resulting model is nonlinear and
would yield an exact solution. Let the initial condition be $p^0(s)=s$. The rest of the parameter values are chosen to
be the following:
\begin{equation*}
\begin{array}{ll}
T=8.0,\\
\beta(s,y,Q)=1+4sQ,\\
\gamma(s,Q)=(1-s)/2,\\
\mu(s,Q)=2Q.
\end{array}
\end{equation*}
With this choice of model ingredients it can be easily verified that $p(s,t)=se^{t}$ is an exact solution of the DSSM.
Given the exact solution, we can show numerically the order of accuracy of the schemes by means of an error table. We
ran seven simulations for each scheme with step sizes being halved with each successive simulation. Then we calculated
the corresponding $L^1$ norm of the error in each simulation for all schemes. In the initial simulation we let $N=10$
and $L = 40$. Based on these consecutive $L_1$ errors we calculated the orders of accuracy, and listed the results in
Table 1. This table indicates that the designed order of accuracy is obtained by all three schemes for this smooth
solution of the model.

To have a better understanding of the order of accuracy, we plot the logarithmic value of the $L_1$ norm of the errors
for all three schemes in Figure 1. Combining Table 1 and Figure 1, one can see clearly that the two second-order
methods SOEU and SOEM perform almost equally well in this case when the model parameters and solutions are smooth
functions. Also it seems that to get a similar accuracy as obtained in the second-order methods, the first-order method
requires to adopt step sizes at least $32$ times smaller.

\begin{table}\label{table_error}
\caption{$L^1$ errors and orders of accuracy for FOEU, SOEU and SOEM schemes.}
\vspace{.1 in}
\centering
\begin{tabular}[c]{|c| c| c | c | c | c | c | c |}
\hline
\multirow{2}{*}{$N$} & \multirow{2}{*}{$L$}
& \multicolumn{2}{c|}{FOEU} & \multicolumn{2}{c|}{SOEU} & \multicolumn{2}{c|}{SOEM} \\ \cline{3-8}
         &      & $L^1$ error & Order & $L^1$ error & Order  &$L^1$ error  & Order    \\ \hline
   10  & 40   & 2.51E-01 &      & 3.68E-03  & & 6.30E-03 &          \\ \hline
   20  & 80   & 1.15E-01 & 1.12 & 9.63E-04  & 1.94 & 1.66E-03 & 1.92  \\ \hline
   40  & 160  & 5.56E-02 & 1.05 & 2.50E-04  & 1.95 & 4.33E-04 & 1.94 \\ \hline
   80  & 320  & 2.74E-02 & 1.02 & 6.39E-05  & 1.97 & 1.11E-04 & 1.97  \\ \hline
   160 & 640  & 1.36E-02 & 1.01 & 1.62E-05  & 1.98 & 2.81E-05 & 1.98  \\ \hline
   320 & 1280 & 6.78E-03 & 1.00 & 4.07E-06  & 1.99 & 7.07E-05 & 1.99   \\ \hline
   640 & 2560 & 3.39E-03 & 1.00 & 1.02E-06  & 2.00 & 1.77E-06 & 1.99   \\ \hline
\end{tabular}
\end{table}

\begin{figure}[ht]\label{figure:L1norm_error}
\centering
    \includegraphics[height=4.25in, width=5.0in]{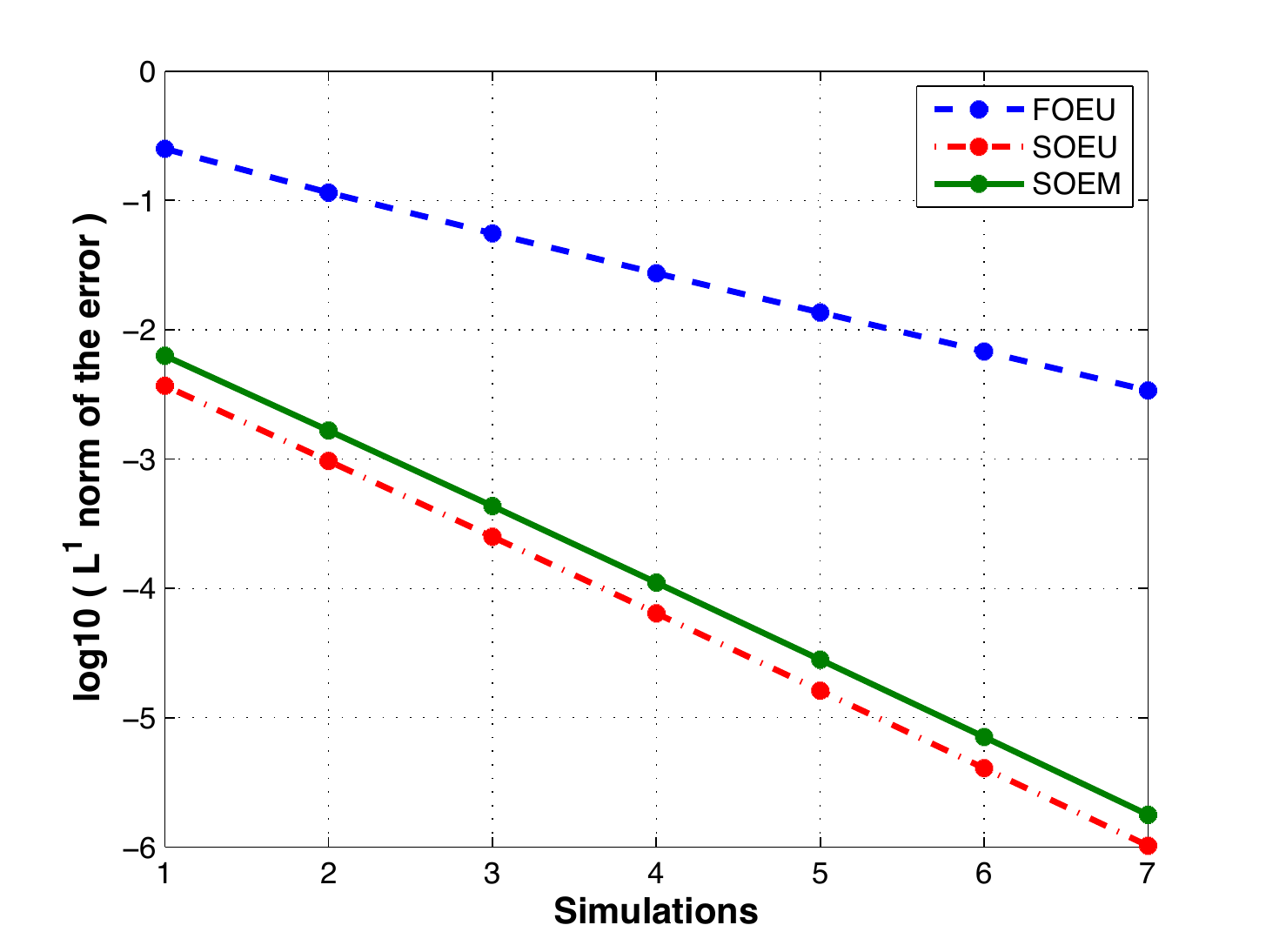}
\caption {\footnotesize The logarithmic value of $L^1$ norm of the errors for FOEU, SOEU and SOEM schemes in seven
simulations.}
\end{figure}

\subsection{Behavior at discontinuity}
The superiority of the SOEM scheme over both FOEU and SOEU methods is clear once solutions become discontinuous. To
show this, we set the initial condition in the DSSM as
\begin{equation*}
\begin{array}{ll}
p^0(s)=
\left\{\begin{array}{lll}
0.5, & 0 \leq s < 0.25,\\
1, & 0.25 \leq s \leq 0.75,\\
0.5, & 0.75 < s \leq 1,\\
\end{array}\right.\\
\end{array}
\end{equation*}
and choose the following model ingredients
\begin{equation*}
\begin{array}{ll}
\beta(s,y,Q)=
\left\{\begin{array}{lll}
0, & s \leq y-\frac{1}{2m},\\
m, & y-\frac{1}{2m} \leq s \leq y+\frac{1}{2m},\\
0, & s>y+\frac{1}{2m},
\end{array}\right.\\
\gamma(s,Q)=(1-s)/2,\\
\mu(s, Q)=2\exp(0.1Q),\\
\end{array}
\end{equation*}
with $m$ being a positive constant.\\
The above parameter choices introduce several discontinuities in the solution: two that arise from the initial
condition and another that arises
from the incompatibility of the boundary and initial condition at the origin. In the numerical simulations we use
$T=1.0$, $N=400$, and $L=800$. The results corresponding to different values of $m$ for all three methods are shown in
Figure 2. One can observe that SOEM scheme performs better than both the FOEU and SOEU schemes. It demonstrates
sharper accuracy in capturing the discontinuity in the solution than both upwind schemes without generating (decaying)
spurious oscillations.

\begin{figure}[htb]\label{discontinuous}
\centering
\includegraphics[height=2.1 in, width=3.2in]{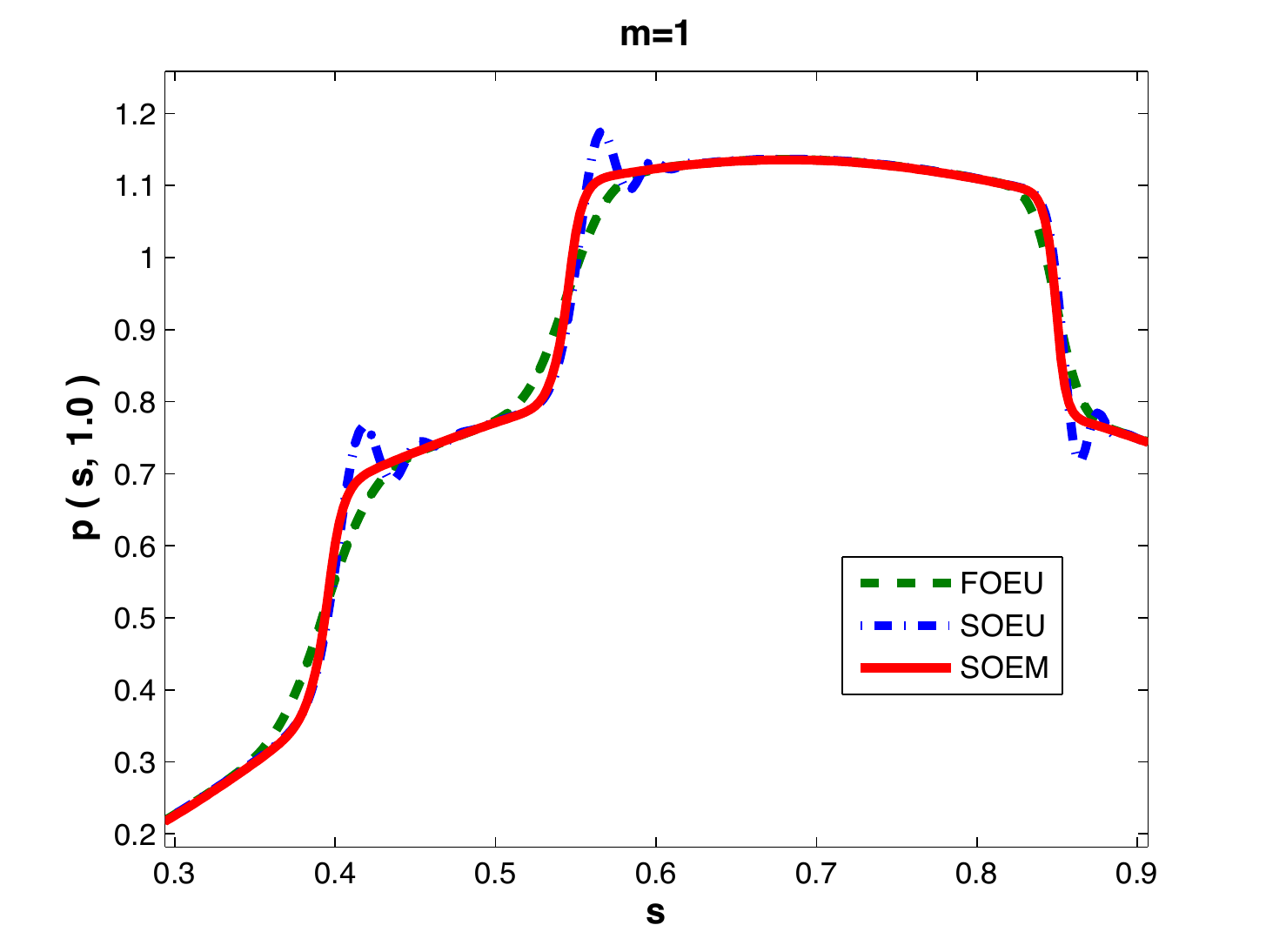}\hspace{-0.1in}
\includegraphics[height=2.1 in, width=3.2in]{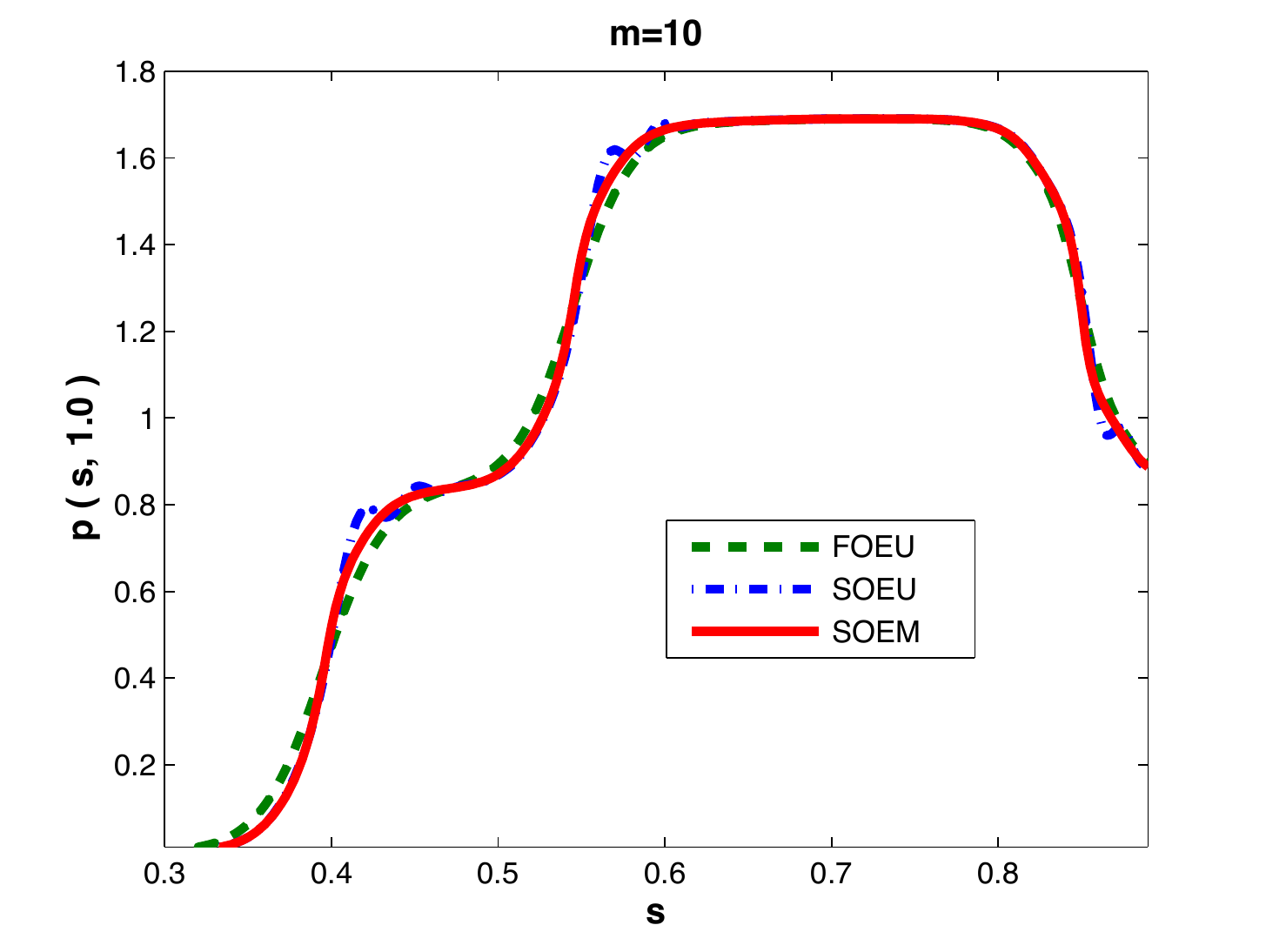}
\includegraphics[height=2.1 in, width=3.2in]{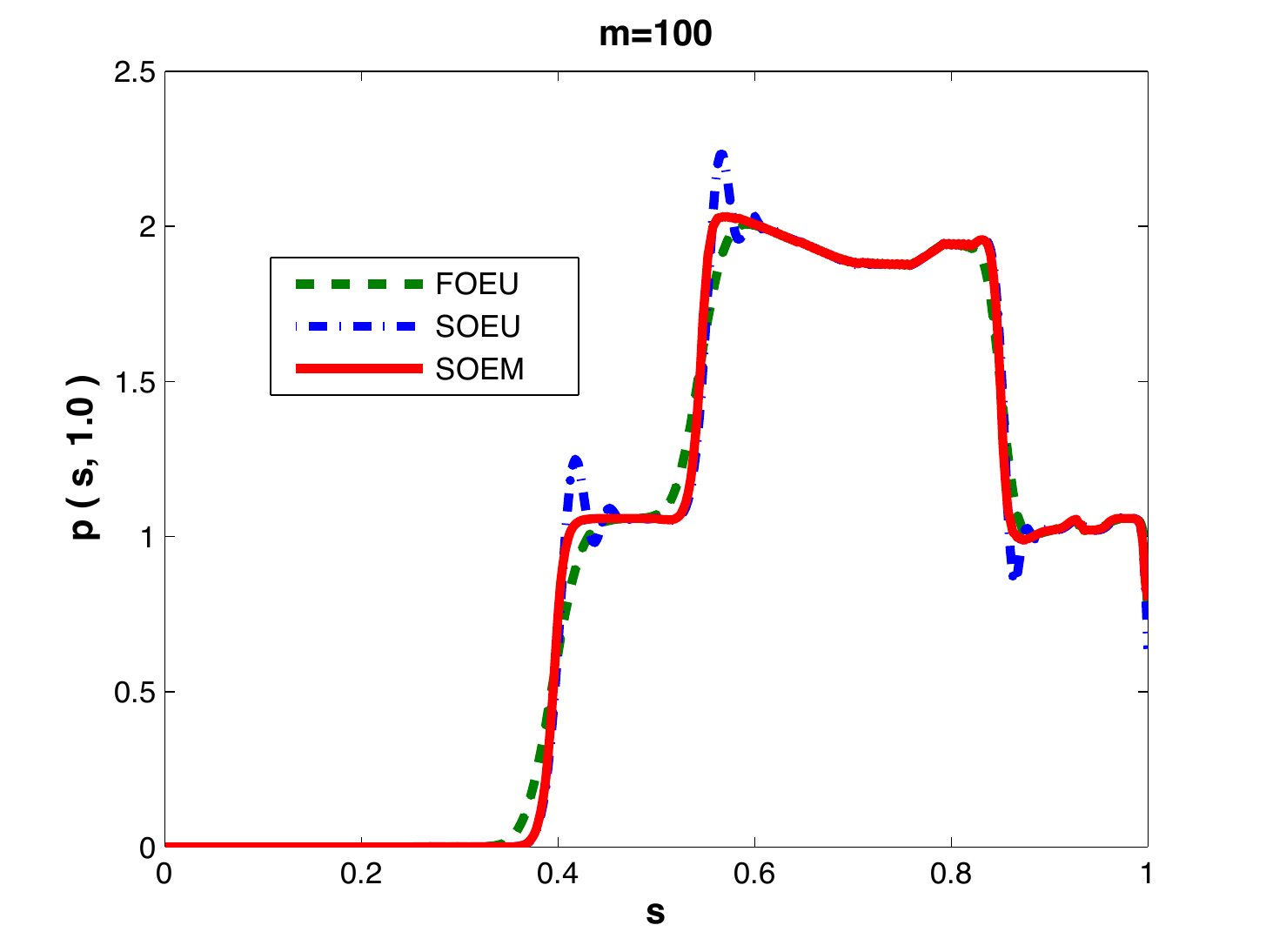}\hspace{-0.1in}
\includegraphics[height=2.1 in, width=3.2in]{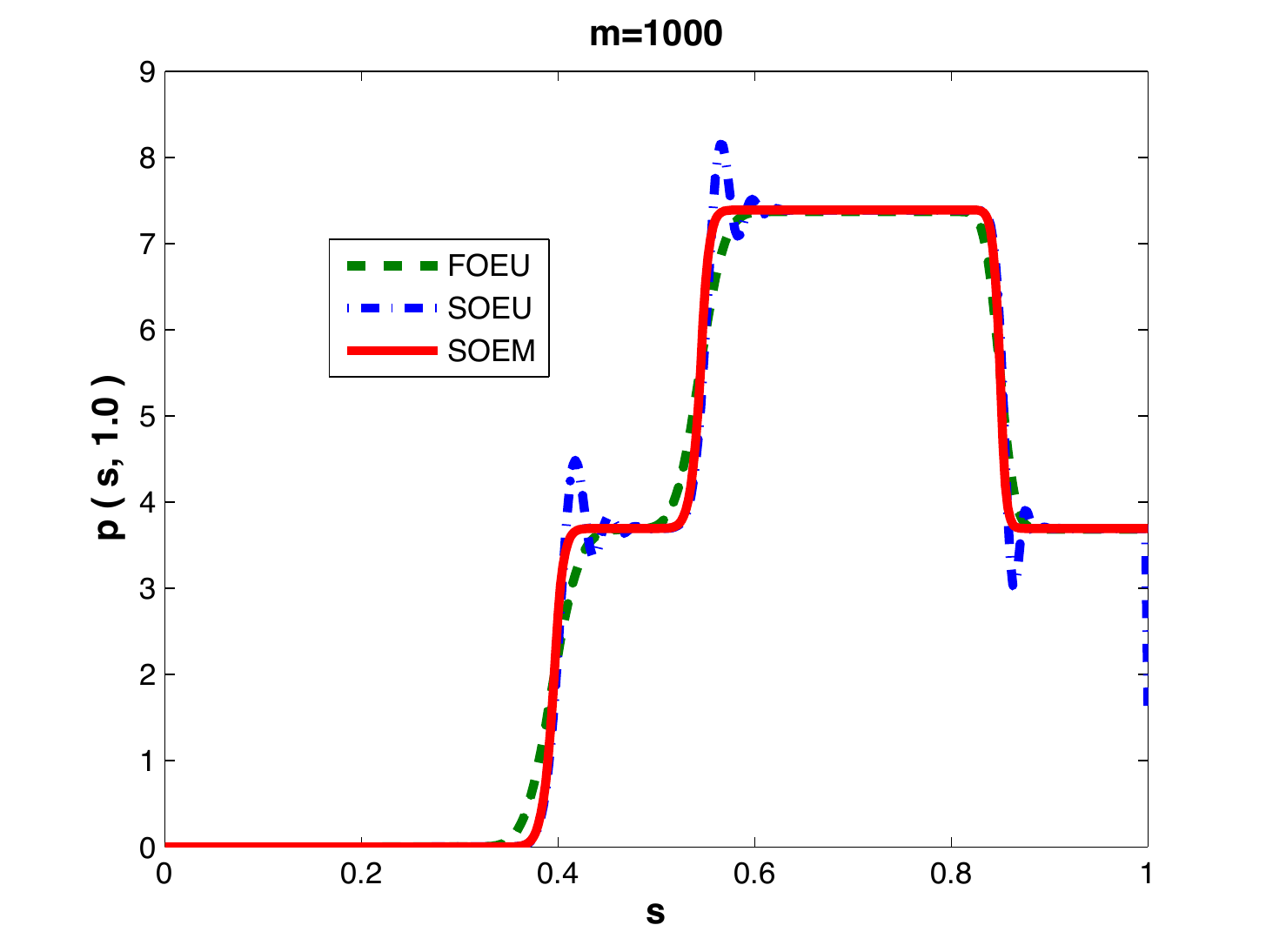}
\caption {\footnotesize The size distributions at time $t=1$ are plotted for all three schemes for $m =1, 10, 100$, and
$1000$. }
\end{figure}

\subsection{Numerical verification of the convergence of solutions of DSSM to CSSM}
In this section we provide some numerical corroboration to Theorem~\ref{weak*}. To this end, we set the initial
condition to be $p^{0}(s)=s^3$ and the parameters involved in the DSSM as follows:
\begin{equation*}
\begin{array}{ll}
\gamma(s, Q)= \frac{1}{2} (1-s),\\
\mu(s, Q) =1,
 \end{array}
\end{equation*}
To invoke Theorem \ref{weak*} let
\begin{equation*}
\beta_1(s,a,b)=\frac{s^{a-1}(1-s)^{b-1}}{B(a,b)},\quad s\in [0,1],
\end{equation*}
be the Beta probability density function with parameters $a$ and $b$; while
\begin{equation*}
B(a,b)=\int_{0}^{1}x^{a-1}(1-x)^{b-1}\ dx
\end{equation*}
is the Beta function.

Note that if we fix $a>1$ and choose a sequence $b_n \to \infty$ then from the properties of the Beta probability
density function the sequence of fertility functions of DSSM $\beta_n(s,y,Q)=\beta_1(s; a,b_n) \beta_2 (y,Q)$ with
$\beta_2\equiv 1$ satisfies the conditions in Theorem \ref{weak*}. Thus, this theorem states that the solutions of the
DSSM will converge to the solution of CSSM with fertility $\beta_2=1$ in the weak* topology. The numerical results we
present below demonstrate that this convergence may actually hold in a stronger topology, namely $L^1$.

In the numerical simulations presented below, we choose $a=1.01$ and $b=50, 75, 100$, respectively.  The graphs of the
fertility function $\beta$ corresponding to these values of $b$ are shown in Figure \ref{beta_function}. To simulate
the CSSM, we let the fertility $\beta_2=1$ and for all other parameters we use the same values given above for DSSM. We
then apply SOEM for solving the DSSM and CSSM. The results of the densities of DSSM and CSSM at $T=0.8$ are presented
in Figure \ref{beta_distribution}.

\begin{figure}[H]
\centering
\includegraphics[height=3.5in, width=5.0in]{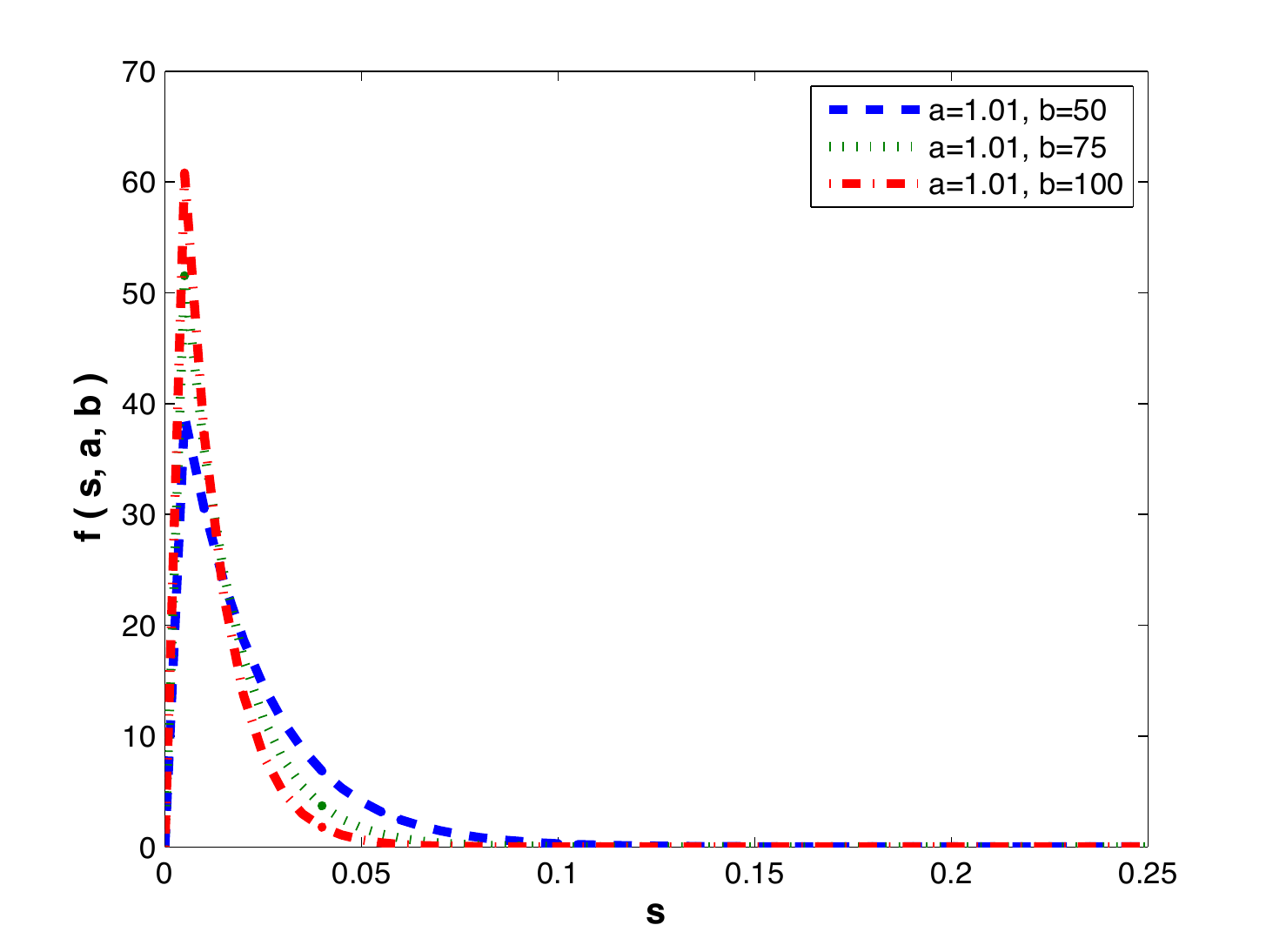}
\caption {\footnotesize The graph of the distributed recruitment rate $\beta_1(s,a,b)$ for $a=1.01$ and $b=50,75, 100$.
}
\label{beta_function}\end{figure}
In Figure 4 we plot the solution $p$ at $t=0.8$  of the DSSM for different values of $b$, and the corresponding
solution of the CSSM.
\begin{figure}[H]
\centering
\includegraphics[height=3.5in, width=5.0in]{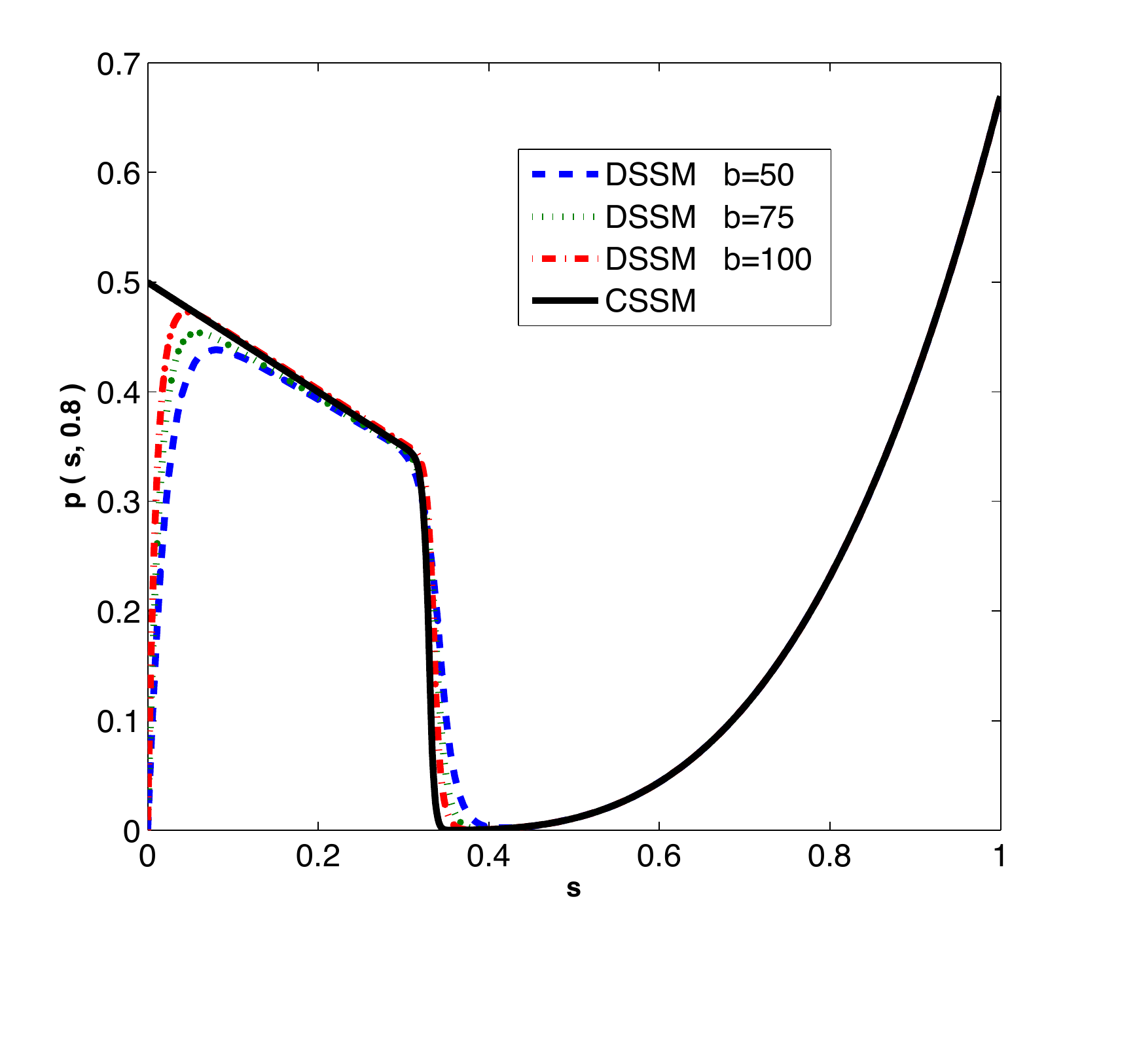}
\caption {\footnotesize The solution $p(s, 0.8)$ for DSSM corresponding to $a=1.01$ and $b=50, 75, 100$ against the
solution $p(s, 0.8)$ for CSSM. }
\label{beta_distribution}
\end{figure}

\subsection{Supercritical Hopf-bifurcation}
We present a ``toy model" here, in which a unique positive steady state looses its stability via Hopf-bifurcation.
This example is further interesting, since as we will see, the net reproduction function is decreasing at the steady
state
(i.e. its derivative is negative) but the steady state is unstable. In fact this is the only case when stability can be
lost via Hopf-bifurcation, since if the model ingredients are such that the derivative of the net reproduction function
is positive then the governing linear semigroup is positive, see e.g. \cite{FH}.  To illustrate the main ideas first we
introduce a simple example for the classical Gurtin-MacCamy-type model, and then we perform numerical simulations to
show that supercritical Hopf-bifurcation occurs in a corresponding distributed states-at-birth model, too.

Let $\gamma\equiv 1$ and the mortality rate $\mu= \mu(s)$. Assume that the survival probability $\pi(s)
=\exp\left\{-\int_0^s \mu(\tau) d\tau\right\}$ is given by:
\begin{equation}
\begin{array}{ll}
\pi(s)=
\left\{
\begin{array}{ll}
1, & s\in[0,{s_c}],\\
0, & s\in [{s_c}, 1],
\end{array}
\right.
\end{array}
\end{equation}
\begin{equation}
\begin{array}{ll}
\beta(s,Q)=
\left\{
\begin{array}{ll}
0, & s\in [0,q)\cup (q+\varepsilon,1],\, Q\in [0,\infty),\\
e^{-Q}\tilde R\varepsilon^{-1}, & s\in [q,q+\varepsilon],\, Q\in [0,\infty),
\end{array}
\right.
\end{array}
\end{equation}
where $\tilde R>1,\,\varepsilon>0$, and $0<q<{s_c}<1$. Note that in this example both the fertility and mortality
functions are discontinuous. With this choice of the survival probability and fertility function
the net reproduction function reads:
\begin{equation*}
R(Q)=\int_q^{q+\varepsilon}e^{-Q}\tilde R\varepsilon^{-1}\,ds=\tilde Re^{-Q}.
\end{equation*}
Hence for any $\tilde R>1$ there is a unique positive steady state with total population size $Q_*=\ln(\tilde R)$.
We have
\begin{equation*}
p_*(0)=\frac{Q_*}{\int_0^1\pi(s)\, ds}={s_c}^{-1}\ln(\tilde R),\quad \text{and} \quad p_*(s)={s_c}^{-1}\ln(\tilde
R)\pi(s).
\end{equation*}
The characteristic equation corresponding to the linearized system at the positive steady state reads (see e.g.
\cite{FH}):
\begin{align}
1=K(\lambda)&=\int_0^1\beta(s,Q_*)\pi(s)e^{-\lambda s}\, ds \nonumber \\
& \quad + \int_0^1\pi(s)e^{-\lambda s}\, ds\int_0^1{s_c}^{-1}\ln(R)\beta_Q(s,Q_*)\pi(s)\, ds\nonumber \\
& = e^{-\lambda q}\frac{1-e^{-\lambda \varepsilon}}{\lambda\varepsilon}-{s_c}^{-1}\ln(\tilde
R)\frac{1-e^{-\lambda{s_c}}}{\lambda}.\label{chareq1}
\end{align}
In the limit as $\varepsilon\to 0$ the characteristic equation \eqref{chareq1} reduces to:
\begin{equation}
1=e^{-\lambda q}-{s_c}^{-1}\ln(\tilde R)\frac{1-e^{-\lambda {s_c}}}{\lambda}.\label{chareq2}
\end{equation}
We look first for pure imaginary roots of the characteristic equation \eqref{chareq2}, i.e. assume that
$\lambda=i\alpha$ for some $\alpha\in\mathbb{R}\setminus\{0\}$. For such an eigenvalue equation \eqref{chareq2} reads:
\begin{equation}
1=(\cos(\alpha q)-i\sin(\alpha q))+i\alpha^{-1}{s_c}^{-1}\ln(\tilde
R)(1-\cos(\alpha{s_c})+i\sin(\alpha{s_c})),\label{chareq3}
\end{equation}
which is equivalent to
\begin{align}
1=&\cos(\alpha q)-\alpha^{-1}{s_c}^{-1}\ln(\tilde R)\sin(\alpha{s_c}),\label{chareq4}\\
0=&-\sin(\alpha q)-\alpha^{-1}{s_c}^{-1}\ln(\tilde R)(\cos(\alpha{s_c})-1).\label{chareq5}
\end{align}
Straightforward calculations show that for $q=\frac{1}{6},\,{s_c}=\frac{1}{2}$ and for $\ln(\tilde R)=\frac{3\pi}{2}$
equations
\eqref{chareq4}-\eqref{chareq5} admit the solution $\lambda=3\pi i$.
Next we would like to show that the pair of purely imaginary eigenvalues $\lambda_{1/2}=\pm 3\pi i$ cross the $y$-axis
to the right. To this end we write ${s_c}^{-1}\ln(\tilde R)=3\pi+r$ and
$\lambda=q+3\pi i$, where $r,q\in\mathbb{R}$. The characteristic equation \eqref{chareq2} reads:
\begin{equation*}
1=-ie^{-\frac{q}{6}}-\frac{r+3\pi}{q+3\pi i}(1-ie^{-\frac{r}{2}}),
\end{equation*}
which leads to
\begin{align*}
3\pi=& -qe^{-\frac{q}{6}}+(r+3\pi)e^{-\frac{r}{2}}, \\
q=& 3\pi e^{-\frac{q}{6}}-r-3\pi \Rightarrow r+3\pi=3\pi e^{-\frac{q}{6}}-q.
\end{align*}
Hence for $r<0\, (\iff {s_c}>0.5)$ small enough the eigenvalue $\lambda=q+3\pi i$ with have a positive real part.
Next we note that the continuous dependence of the eigenvalues on the parameter $\varepsilon$ (see e.g.
\cite[Ch.IV.3.5]{Kato}) implies that for $\varepsilon>0$ small enough the eigenvalue will still have a positive real
part.

Our next numerical example demonstrates, for the first time as far as we know,  that such bifurcation may also occur in
the DSSM. This is somewhat surprising mainly because the integral operator representing the distributed
states-at-birth
may have a smoothing effect, in general. We let $p^{0}=s$ and
$$
\begin{array}{ll}
\gamma = 1,\\
\mu=\frac{160}{(250000s^2-250000s+62505)(0.32\arctan(250-500s)+2)},\\
\beta=\beta_1(s,Q)\beta_2(y),  \label{Hopf-example}
\end{array}
$$
where
$$\begin{array}{ll}
\beta_1(s,Q)=a\exp(-Q)\left(10\arctan(5-1000s)+15.7\right),\\
\beta_2(y)=\frac{1}{\sqrt{2\pi}}\exp{\left(-0.5\left(100(y-1/6+0.005)\right)^2\right)}\exp\left(\frac{3\pi}{2}\right).
\end{array}
$$
Here $a$ is a positive constant.
The dynamics of total population $Q(t)$ for different values of $a$ are shown in Figure 5. Only the maximum and minimum
values of $Q$ are plotted in the bifurcation graph of the dynamics of $Q(t)$ with respect to $a$ .

\begin{figure}[H]\label{comparison_1}
\centering
\includegraphics[height=2.75in, width=3.1in]{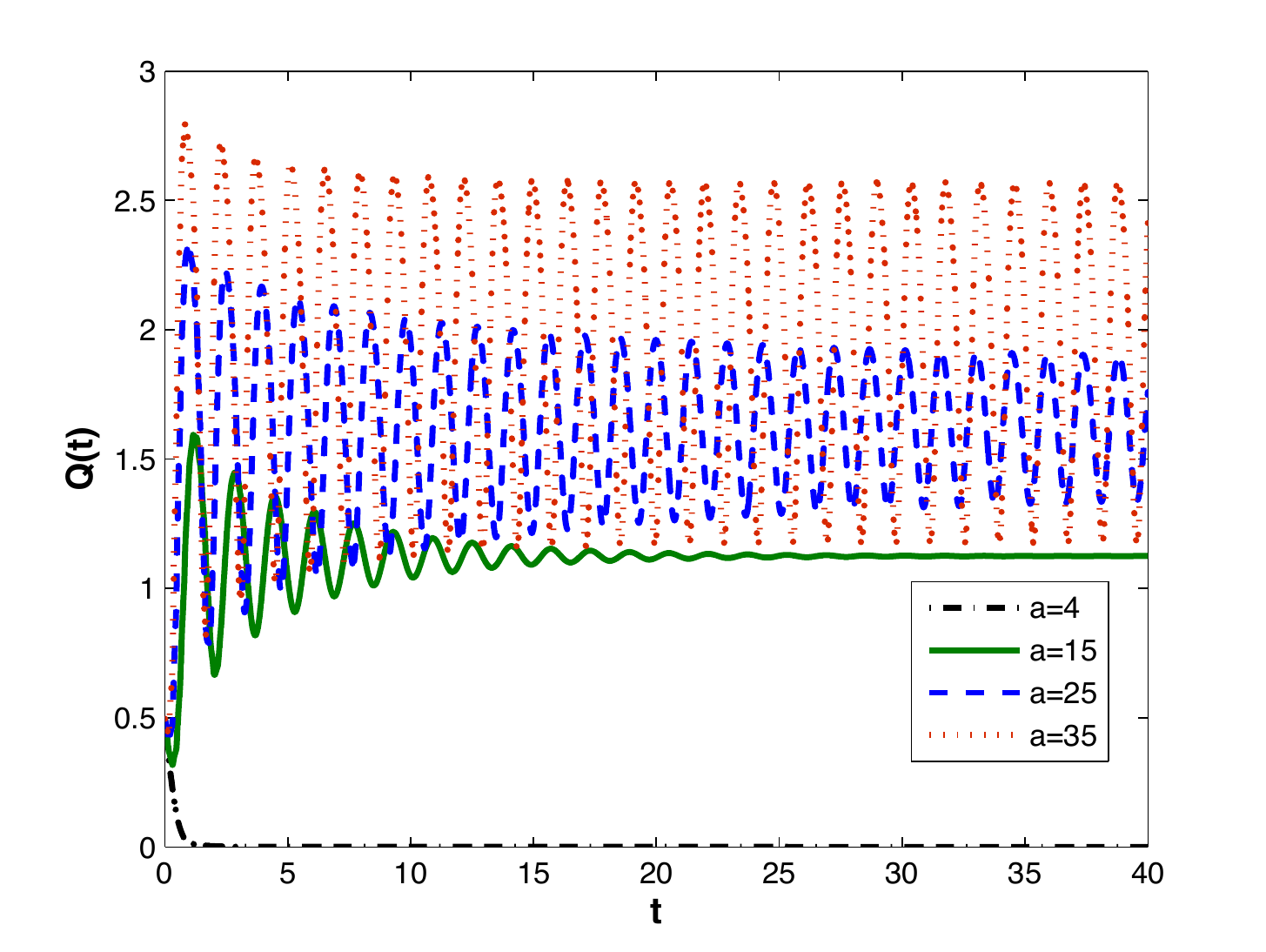}
\includegraphics[height=2.75in, width=3.1in]{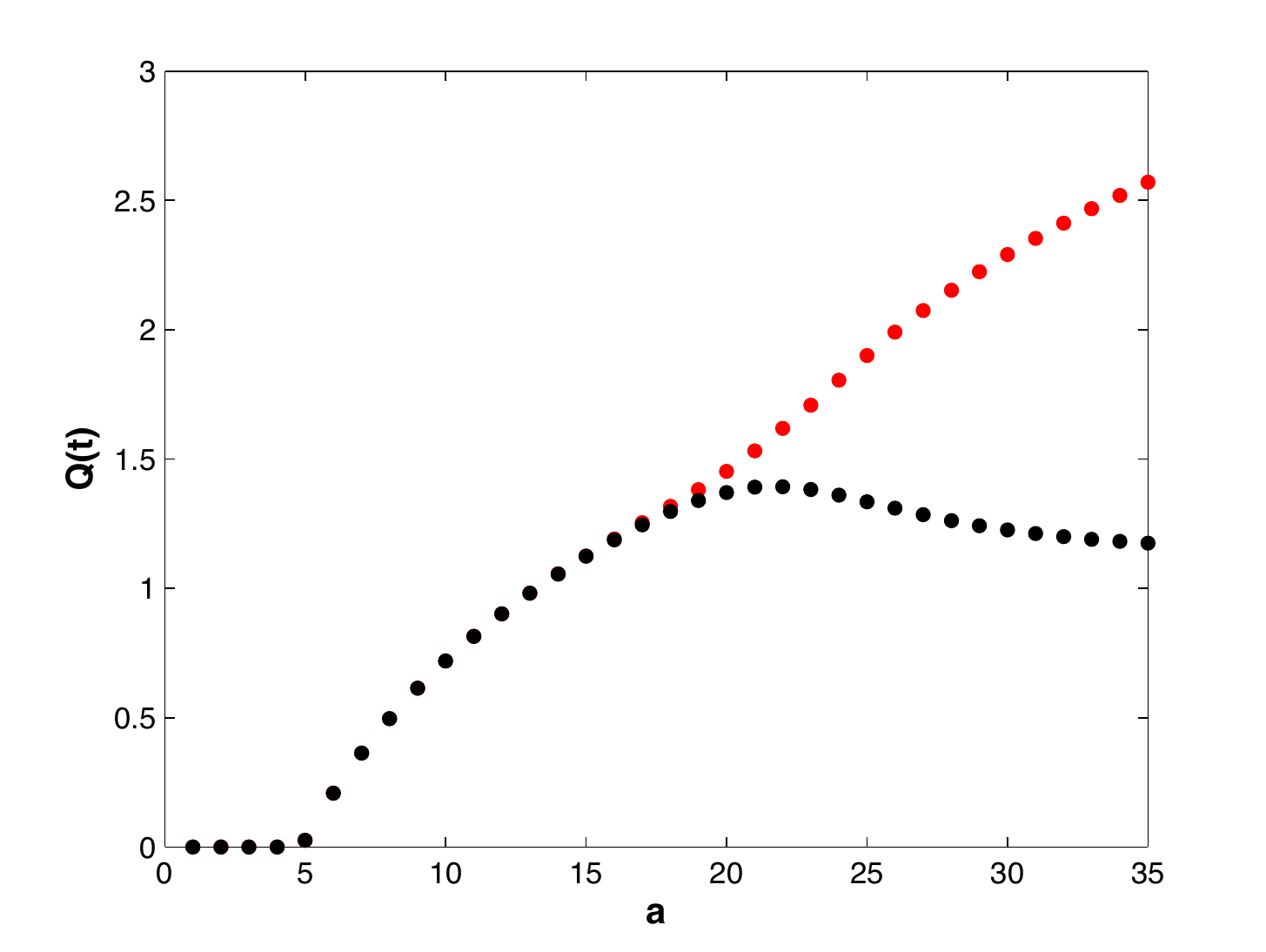}
\caption {\footnotesize Left: A comparison of the total population sizes $Q(t)$ for $a=6, 26, 46$; Right:
Bifurcation graph of $Q$ with respect to parameter $a$.}
\end{figure}

\section{Conclusion}
We have developed a first order upwind scheme and a second order finite difference scheme to approximate the solution
of a size-structured population model with distributed states-at-birth. Convergence of both schemes
to the unique bounded total variation weak solution has been proved. Numerical results are provided to demonstrate the
capability of the numerical methods in resolving smooth as well as discontinuous solutions. For smooth solutions both
schemes achieve the designed order of accuracy. For discontinuous solutions, the second order scheme demonstrates
better accuracy in capturing the discontinuity compared to upwind schemes. The second order scheme is also applied to
the distributed states-at-birth model (DSSM) to show that supercritical Hopf-bifurcation may occur in such models.

We also proved the convergence of the weak solution for the distributed size-structured model (DSSM) in the weak*
topology to that of the classical size-structured model (CSSM) under certain conditions on the fertility function and
used the second order scheme to demonstrate this convergence.\\

\noindent{\bf Acknowledgements:} The work of A.S. Ackleh, X. Li and B. Ma is partially supported by the National Science Foundation
under grant \# DMS-1312963. J. Z. Farkas was supported by a University of Stirling research and enterprise support grant.

\end{document}